\numberwithin{equation}{section}
\apptocmd{\sloppy}{\hbadness 10000\relax}{}{}
\newtheorem{thm}{Theorem}[section]
\newtheorem{lemma}[thm]{Lemma}
\newtheorem{cor}[thm]{Corollary}
\newtheorem{prop}[thm]{Proposition}
\newtheorem{conj}[thm]{Conjecture}
\newtheorem{problem}[thm]{Problem}
\newtheorem{Definition}[thm]{Definition}
\newenvironment{definition}
  {\begin{Definition}\rm}{\end{Definition}}
\newtheorem{Example}[thm]{Example}
\newenvironment{example}
  {\begin{Example}\rm}{\end{Example}}
\newtheorem{Remark}[thm]{Remark}
\newenvironment{remark}
  {\begin{Remark}\rm}{\end{Remark}}
\newtheorem{Question}[thm]{Question}
\newenvironment{question}
  {\begin{Question}\rm}{\end{Question}}
\def\<{\langle}
\def\>{\rangle}
\newcommand\raU[1]{{\xrightarrow[#1]{}}}
\newcommand\raAstU[1]{{\xrightarrow[#1]{\ast}}}
\def\ra{{\raU{}}}
\def\raAst{{\raAstU{}}}
\def\point{\mathsmaller{\Phi}}
\def\PP{\mathsmaller{\Phi^{+}}} 
\def\raPoint{{\xrightarrow[\point]{}}}
\def\raPP{{\xrightarrow[\PP]{}}}
\def\raPPAst{\xrightarrow[\PP]{\ast}}
\def\raPointAst{{\xrightarrow[\point]{\ast}}}
\def\pointprime{\mathsmaller{\Phi'^+}}
\def\raPPprime{{\xrightarrow[\pointprime]{}}}
\def\raPPprimeast{{\xrightarrow[\pointprime]{\ast}}}
\def\Span{{ \operatorname{Span}}_{\R}}
\newcommand\orbit[2]{{#1.#2}}
\newcommand\W[1]{{\orbit{W}{#1}}}
\newcommand\G[1]{{\orbit{G}{#1}}}
\def\FS{\operatorname{FS}} 
\def\SpanPP{{\FS_{\PP}}}
\def\SpanPoint{{\FS_{\point}}}
\def\DD{X} 
\def\DDaff{{\widetilde{\DD}}} 
\def\fund{\gamma} 
\def\extraRoot{0} 
\def\raFund{{\xrightarrow[\mathsmaller{\operatorname{UCF}}]{}}} 
\def\compFund{R} 
\def\compFundaff{{\widetilde{\compFund}}} 
\def\fl{\ell}
\def\sumcoordmap{\sigma} 
\newcommand\sumcoord[1]{\sigma(#1)} 
\newcommand\pseudostab[1]{{\tilde{#1}}} 
\def\pstv{{\pseudostab v}} 
\def\domin{\vartriangleleft} 
\def\sdomin{\blacktriangleleft} 
\def\allones{h} 
\def\R{\mathbb{R}}
\def\Z{\mathbb{Z}}
\def\init{\Omega\cup\{0\}}
\def\l{\lambda}
\def\m{\mu}
\def\scl{0.3}
\def\chipscl{0.6}
\def\chiptikzscl{0.7}
\def\chipcoef{1.2}
\newcommand{\chip}[1]{\begin{tikzpicture}[baseline={([yshift=4pt]current bounding box.south)},block/.style={draw,circle, minimum width={width("11")+12pt},
font=\normalsize,scale=\chipscl}]
\node[block] (A) at (0,0) {$#1$};
\end{tikzpicture}}
\def\sw{south}
\def\ne{north}
\def\scale{0.3}
\def\textScale{0.7}
\def\tpscale{1}
\def\sclbx{1}
\def\dheight{0.3}
\def\shadeofgrey{70}
\newcommand{\nodeC}[5]{
  \node[scale=\scale,draw=#5, fill=#5,circle,line width=1.5pt] (N#1) at (#3) { };
  \node[anchor=\ne,scale=\textScale,text=#5] (SS#1) at (N#1.\sw) {$\mathbf{#2}$};
  \node[anchor=\sw,scale=\textScale,text=#5] (NN#1) at (N#1.\ne) {$#4$};
}
\newcommand{\nodeN}[5]{
  \node[scale=\scale,draw=#5,circle,line width=0.25pt] (N#1) at (#3) { };
  \node[anchor=\ne,scale=\textScale,text=#5] (SS#1) at (N#1.\sw) {$#2$};
  \node[anchor=\sw,scale=\textScale,text=#5] (NN#1) at (N#1.\ne) {$#4$};
}
\newcommand{\nodeD}[5]{
  \node[scale=\textScale] (N#1) at (#3) {$\dots$};
  \node[anchor=\sw] (NN#1) at (N#1.\ne) {$#4$};
}
\def\centraltermination{Proposition~4.3}
\def\permcontainment{Proposition~2.2}
\def\permtrap{Lemma~8.2}
\def\permcc{Theorem~9.1}
\newcommand{\emailhref}[1]{\email{\href{#1}{#1}}}
\title{Root system chip-firing {I}{I}: central-firing}
\author{Pavel Galashin}
\author{Sam Hopkins} 
\author{Thomas McConville}
\author{Alexander Postnikov}
\address{Department of Mathematics, Massachusetts Institute of Technology, 77 Massachusetts Avenue,
Cambridge, MA 02139, USA}
\begin{document}

\begin{abstract}

Jim Propp recently proposed a labeled version of chip-firing on a line and conjectured that this process is confluent from some initial configurations. This was proved by Hopkins-McConville-Propp. We reinterpret Propp's labeled chip-firing moves in terms of root systems: a ``central-firing'' move consists of replacing a weight $\lambda$ by $\lambda+\alpha$ for any positive root $\alpha$ that is orthogonal to $\lambda$. We show that central-firing is always confluent from any initial weight after modding out by the Weyl group, giving a generalization of unlabeled chip-firing on a line to other types. For simply-laced root systems we describe this unlabeled chip-firing as a number game on the Dynkin diagram. We also offer a conjectural classification of when central-firing is confluent from the origin or a fundamental weight.

\end{abstract}

\date{\today}
\keywords{Chip-firing; Abelian Sandpile Model; root systems; confluence}
\subjclass[2010]{05C57; 17B22}

\setcounter{tocdepth}{1}
\maketitle
\tableofcontents
\section{Introduction}

\emph{Chip-firing} is a certain (solitaire) game played on a graph that was introduced by Bj\"{o}rner, Lov\'{a}sz, and Shor~\cite{bjorner1991chip}. The states of this game are configurations of chips on the vertices of this graph. A vertex which has at least as many chips as neighbors is said to be \emph{unstable}. We can \emph{fire} any unstable vertex, which sends one chip from that vertex to each of its neighbors. And we can keep firing chips in this way until we reach a configuration where all vertices are stable. A fundamental result of Bj\"{o}rner-Lov\'{a}sz-Shor is that this process is \emph{confluent}: either we keep firing forever, or we reach a unique stable configuration that does not depend on which unstable vertices we chose to fire. As it turns out, this chip-firing process is essentially the same as the \emph{Abelian Sandpile Model}, originally introduced by the physicists Bak, Tang, and Wiesenfeld~\cite{bak1987self} and subsequently developed by Dhar~\cite{dhar1990self,dhar1999abelian}. For more on chip-firing and sandpiles, we refer the reader to~\cite{levine2010sandpile,corry2017divisors}.

 Bj\"{o}rner, Lov\'{a}sz, and Shor were motivated to define chip-firing on an arbitrary graph by earlier papers of Spencer~\cite{spencer1986balancing} and Anderson et al.~\cite{anderson1989disks} which studied the special case of chip-firing on a line, i.e., on an infinite path graph, which we denote by~$\Z$. Inspired by this initial setting, Jim Propp recently introduced a version of \emph{labeled} chip-firing on a line. The states of the labeled chip-firing process are configurations of distinguishable chips with integer labels $1,2,\ldots,N$ on~$\mathbb{Z}$. For example, with $N=4$, the following is such a configuration:
\begin{center}
\begin{tikzpicture}[scale=\chiptikzscl,block/.style={draw,circle, minimum width={width("11")+12pt},
font=\small,scale=\chipscl}]
    \foreach \x in {-2,-1,...,2} {%
      \node[anchor=north] (A\x) at (\x,0) {$\x$};
    }
    \draw (-2.2,0) -- (2.2,0);
    \draw[dashed] (-5,0) -- (-2.2,0);
     \draw[dashed] (5,0) -- (2.2,0);
    \foreach[count=\i] \a/\b in {0/1,0/2,0/3,0/4} {%
      \node[block] at (\a,{\b*\chipscl*\chipcoef-0.5*\chipscl*\chipcoef}) {$\i$};
      }
\end{tikzpicture}
\end{center}
The firing moves consist of choosing two chips that occupy the same vertex and moving the chip with the lesser label one vertex to the right and the chip with the greater label one vertex to the left. For example, if we chose to fire chips~\chip1 and~\chip3 in the previous configuration that would lead to:
\begin{center}
\begin{tikzpicture}[scale=\chiptikzscl,block/.style={draw,circle, minimum width={width("11")+12pt},
font=\small,scale=\chipscl}]
    \foreach \x in {-2,-1,...,2} {%
      \node[anchor=north] (A\x) at (\x,0) {$\x$};
    }
    \draw (-2.2,0) -- (2.2,0);
    \draw[dashed] (-5,0) -- (-2.2,0);
     \draw[dashed] (5,0) -- (2.2,0);
    \foreach[count=\i] \a/\b in {1/1,0/1,-1/1,0/2} {%
      \node[block] at (\a,{\b*\chipscl*\chipcoef-0.5*\chipscl*\chipcoef}) {$\i$};
      }
\end{tikzpicture}
\end{center}
One can perform these firing moves until no two chips occupy the same spot. Propp conjectured that if one starts with an even number of chips at the origin, this process is  confluent and in particular the chips always end up in sorted order. For example, if we continue firing the four chips above, we necessarily will end up at:
\begin{center}
\begin{tikzpicture}[scale=\chiptikzscl,block/.style={draw,circle, minimum width={width("11")+12pt},
font=\small,scale=\chipscl}]
    \foreach \x in {-2,-1,...,2} {%
      \node[anchor=north] (A\x) at (\x,0) {$\x$};
    }
    \draw (-2.2,0) -- (2.2,0);
    \draw[dashed] (-5,0) -- (-2.2,0);
     \draw[dashed] (5,0) -- (2.2,0);
    \foreach[count=\i] \a/\b in {2/1,1/1,-1/1,-2/1} {%
      \node[block] at (\a,{\b*\chipscl*\chipcoef-0.5*\chipscl*\chipcoef}) {$\i$};
      }
\end{tikzpicture}
\end{center}
It is easy to see that the labeled chip-firing process is not confluent if the initial number of chips is odd (e.g., three). Propp's sorting conjecture was recently proved by Hopkins, McConville, and Propp~\cite{hopkins2017sorting}. 

The crucial observation that motivated our present research is that we can generalize Propp's labeled chip-firing to ``other types,'' as follows. For any configuration of~$N$ labeled chips, if we define $v:=(v_1,v_2,\ldots,v_N) \in \mathbb{Z}^{N}$ by 
\[v_i:= \textrm{the position of the chip~\chip i},\] 
then, for $1\leq i < j \leq N$, we are allowed to fire chips~\chip i and~\chip j in this configuration as long as $v$ is orthogonal to $e_i-e_j$; and doing so replaces the vector $v$ by $v+(e_i-e_j)$. Here~$e_1,\dots,e_N$ are the standard basis vectors of $\Z^N$. Note that the vectors $e_i-e_j$ for~$1 \leq i < j \leq N$ are exactly (one choice of) the positive roots $\Phi^+$ of the root system~$\Phi$ of Type $A_{N-1}$. 

So let us now consider an arbitrary root system~$\Phi$ living in some Euclidean vector space $V$. Given a point $v\in V$ and a positive root $\alpha\in\Phi^+$, one is allowed to perform a \emph{central-firing move}, which consists of replacing $v$ by $v+\alpha$, whenever $v$ is orthogonal to~$\alpha$. This process generalizes Propp's labeled chip-firing moves to any (crystallographic) root system~$\Phi$. The name ``central-firing'' comes from the fact that we allow these firing moves whenever our vector belongs to a certain central hyperplane arrangement (namely, the Coxeter arrangement of $\Phi$), as opposed to other firing conditions studied in~\cite{galashin2017rootfiring1}.

For any root system $\Phi$, we say that \emph{central-firing is confluent from $v\in V$} if the process of applying central-firing moves starting from $v$ terminates and the terminal point is independent on the sequence of central-firing moves. Thus the result of~\cite{hopkins2017sorting} can be reformulated as follows.

\begin{thm}[\cite{hopkins2017sorting}]
For the root system of Type $A_{2n-1}$, central-firing is confluent from $0\in V$.
\end{thm}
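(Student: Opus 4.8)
The plan is to treat central-firing from the origin as an abstract rewriting system on the lattice $\{v\in\Z^{2n}:\sum_i v_i=0\}$ and to verify the two ingredients of confluence separately: that every firing sequence terminates, and that the terminal point is independent of the choices made. The natural first instinct is to invoke Newman's lemma, deducing global confluence from termination together with \emph{local} confluence. I expect termination to be the more routine ingredient. The quantity $\sum_i v_i^2$ strictly increases by $2$ at every central-firing move, since it replaces two equal coordinates $c,c$ by $c+1,c-1$; so it suffices to bound the reachable configurations. This boundedness itself needs an argument, but it is seeded by the two extreme chips: chip $1$ has the smallest label and so only ever moves right, while chip $2n$ only ever moves left, and one leverages such monotonicity (together with the single linear invariant $\sum_i v_i=0$) to confine all reachable positions to a fixed window. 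Finitely many reachable states plus a strictly increasing monovariant then yields termination.

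The main obstacle is confluence, and here Newman's lemma alone does not suffice, because local confluence genuinely fails. If three chips occupy a common site, then firing two of the three available pairs leads to two configurations in which those three chips sit at distinct sites and cannot be rejoined \emph{using only those chips}---this is precisely the phenomenon behind the non-confluence for an odd number of chips. The saving feature is that such obstructing configurations (three chips clustered, with the remaining chips unable to help) are typically \emph{unreachable} from the origin, since $\sum_i v_i=0$ is the only conserved linear quantity and the failing local patterns violate no such constraint in isolation. Thus confluence cannot be obtained by checking a diamond condition on the whole state space; it must be proved by a global argument tailored to starting at the origin.

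Accordingly, I would prove confluence by identifying the terminal configuration explicitly and showing that every maximal firing sequence reaches it. The candidate is the sorted state, in which chip $k$ occupies position $n+1-k$ for $k\le n$ and position $n-k$ for $k>n$, so that the occupied positions are exactly $\{-n,\dots,-1,1,\dots,n\}$ with smaller labels further to the right; this state is fixed by the order-two symmetry that negates all positions and reverses the labels $k\leftrightarrow 2n+1-k$ (coming from the longest Weyl group element), a symmetry of the whole firing dynamics that one can exploit throughout. One natural route is induction on $n$: track the extreme chips $1$ and $2n$, show they must ultimately escape to positions $n$ and $-n$ and thereafter decouple from the rest, and then invoke the inductive hypothesis on the interior $2n-2$ chips, which behave like a Type $A_{2n-3}$ system once the extremes have departed. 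The heart of the matter---and the step I expect to be genuinely hard---is controlling the coupling \emph{before} decoupling: chip $1$ interacts with interior chips as it moves right, so one must rule out any sequence of choices that traps the extreme chips short of $\pm n$, scrambles the interior, or leaves the interior at a non-canonical sub-configuration to which the inductive hypothesis does not apply. Making this precise amounts to showing that the left-right order of each pair $i<j$ is forced to resolve to $v_i>v_j$ without oscillating back, and it is this pairwise-order control, rather than termination, where the real content of the Hopkins--McConville--Propp theorem lies.
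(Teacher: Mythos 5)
First, a point of comparison that matters for this review: the paper contains no proof of this statement at all. It is imported verbatim from Hopkins--McConville--Propp~\cite{hopkins2017sorting} and only used here (indeed, the paper remarks that the main difficulty in~\cite{hopkins2017sorting} was precisely to show that labeled chip-firing \emph{sorts} the chips). So your proposal must stand on its own, and it does not: it is a proof plan whose crucial step is explicitly left open. Your diagnosis of the structure is accurate --- termination is the routine half; local confluence genuinely fails on the full state space (three chips at a common site yield pairwise unjoinable branches, exactly the odd-chip obstruction), so Newman's lemma cannot be applied globally and confluence from $0$ must come from a global analysis of the configurations reachable from the origin; and the sorted configuration is the correct terminal state. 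But the entire content of the theorem is the step you defer: showing that \emph{every} maximal firing sequence from the origin reaches that sorted state. Observing that ``the real content lies in pairwise-order control'' names the difficulty without resolving it; nothing in the proposal excludes a firing sequence that terminates in an unsorted stable configuration.

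The two arguments you do sketch both have holes. (i) The induction on $n$ cannot close in the form stated: at the moment chips $1$ and $2n$ would ``decouple,'' the interior $2n-2$ chips are not at the origin but in some scrambled intermediate configuration, so the inductive hypothesis --- confluence from $0$ in type $A_{2n-3}$ --- simply does not apply. You would need confluence from an entire dynamics-invariant class of intermediate configurations, and identifying and proving that is the missing theorem, not a detail; you acknowledge this obstacle but offer no way around it. (ii) Your termination argument needs the reachable positions to be bounded, and the monotonicity of the extreme chips does not give this: it says nothing about chips $2,\dots,2n-1$, and by itself does not even bound how far right chip $1$ can travel. This second gap is easily repaired --- Proposition~\ref{prop:centraltermination} of the paper gives a nonnegative potential $\<2\rho-\l,\,2\rho-\l\>$ that strictly decreases by a uniform positive amount at every central-firing move, for every root system and every initial weight --- but as written even the ``routine'' half of your argument is incomplete.
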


Note that the classical (unlabeled) chip-firing on a line can be obtained from its labeled counterpart by forgetting the labels. In terms of the root system $\Phi$ of Type~$A_{N-1}$, this corresponds to modding out by the action of the symmetric group $S_N$, in other words, by the action of the \emph{Weyl group} $W$ of $\Phi$. One can thus generalize unlabeled chip-firing on a line to other types by extending the central-firing moves to the $W$-orbits in $V$. Given $v\in V$, an \emph{unlabeled central-firing move} consists of replacing the orbit $\W v$ with $\W(v+\alpha)$ for some $\alpha\in\Phi^+$ that is orthogonal to $v$. We say that unlabeled central-firing for $\Phi$ is \emph{confluent from $v$} if there exists $v'\in V$ such that any sequence of unlabeled central-firing moves starting at $\W v$ terminates at $\W v'$.

\begin{thm}\label{thm:UCF_confluence_intro}
  For any root system $\Phi$ and any weight $v \in P$, unlabeled central-firing is confluent from $v$.
\end{thm}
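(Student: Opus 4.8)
The plan is to treat unlabeled central-firing as an abstract rewriting system on the set of $W$-orbits $\W{v}$ and invoke Newman's lemma (the diamond lemma): a terminating rewriting system is confluent if and only if it is locally confluent. Thus I would split the proof into a termination statement and a local-confluence statement and verify each.

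For termination, the basic monovariant is the squared norm. If $\W{v}\to\W{(v+\alpha)}$ is a move, then, choosing a representative with $\langle v,\alpha\rangle=0$, we get $|v+\alpha|^2=|v|^2+|\alpha|^2>|v|^2$, and since the norm is $W$-invariant it is well defined on orbits and strictly increases along every firing sequence. Because every reachable orbit lies in the single coset $v+Q$ of the root lattice, which is discrete, it suffices to bound the reachable norms; equivalently, to rule out an infinite firing sequence escaping to infinity. I expect this boundedness to be the main obstacle: the norm monovariant alone does not confine the trajectory, so one must show that the set of orbits reachable from $\W{v}$ is finite. I would establish this by lifting---any orbit sequence $\W{v_0}\to\W{v_1}\to\cdots$ lifts to an honest central-firing sequence $v_0\to v_1\to\cdots$ on vectors (take $v_{k+1}=\tilde v_k+\alpha_k$ for the chosen representative $\tilde v_k$)---so it is enough to prove that vector central-firing terminates, and for that I would argue that the trajectory cannot run off along a singular direction of the Coxeter arrangement, reducing the escape-to-infinity scenario to a lower-rank subsystem and inducting on rank.

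The heart of the argument is local confluence, and here the passage to $W$-orbits buys a clean reduction to rank two. First I would note that the move is well defined on orbits and that, up to replacing $v$ by $wv$, any two successors of $\W{v}$ can be written $\W{(v+\gamma_1)}$ and $\W{(v+\gamma_2)}$ with $\gamma_1,\gamma_2\in\Phi$ both orthogonal to the same representative $v$. If $\gamma_2=-\gamma_1$ the two orbits already coincide, since $s_{\gamma_1}(v+\gamma_1)=v-\gamma_1$ whenever $\langle v,\gamma_1\rangle=0$; if $\gamma_1,\gamma_2$ are proportional the moves are identical. Otherwise $\gamma_1,\gamma_2$ span a rank-two subsystem $\Psi=\Phi\cap\Span(\gamma_1,\gamma_2)$ with $v\perp\Psi$. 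The key observation is that, inside the affine plane $v+\Span(\Psi)$, a root $\gamma\in\Psi$ is orthogonal to $v+u$ if and only if it is orthogonal to $u$ (because $\langle v,\gamma\rangle=0$), so firing $\Psi$-roots within this plane is exactly central-firing for the rank-two root system $\Psi$ started from the origin. Moreover any $\sigma\in W_\Psi$ fixes $v\in\Psi^{\perp}$, so two in-plane points are $W$-equivalent as soon as they are $W_\Psi$-equivalent. Hence finding a common descendant of $\W{(v+\gamma_1)}$ and $\W{(v+\gamma_2)}$ reduces precisely to confluence of unlabeled central-firing from the origin in $\Psi$.

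It then remains to check the rank-two base cases directly, which is a finite computation: for $A_1\times A_1$, $A_2$, $B_2$, and $G_2$ one lists the few orbits reachable from $\W{0}$ and verifies that they funnel into a single terminal orbit. For instance, in $A_2$ no two roots are orthogonal, so one move from $\W{0}$ already reaches the single orbit of roots and halts; in $B_2$ both the short-root and the long-root first moves reconverge at $\W{(2,1)}$. With termination and local confluence in hand, Newman's lemma yields confluence of unlabeled central-firing from every weight $v\in P$, completing the proof.
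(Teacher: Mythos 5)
Your overall architecture coincides with the paper's: Newman's lemma, lifting orbit moves to moves on representatives, and reduction of local confluence to the rank-two sub-root system spanned by the two fired roots, followed by the finite check of $A_1\oplus A_1$, $A_2$, $B_2$, $G_2$. That half of your argument is correct and matches the paper's proof essentially step by step, including the observation that in rank two only the orbit of the origin admits more than one move, and the reflection trick $s_{\gamma}(v+\gamma)=v-\gamma$ when $\langle v,\gamma\rangle=0$, which is exactly what the paper uses to prove its lifting proposition (so that orbit moves lift to \emph{positive}-root firings at a prescribed representative).

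The gap is in termination. Your norm monovariant is correct but, as you yourself note, insufficient, and your proposed repair (induction on rank, ruling out escape ``along a singular direction'') does not close it. Concretely: let $S$ be the set of roots fired infinitely often in a hypothetical infinite sequence $v_0,v_1,\dots$; projecting onto $\operatorname{Span}(S)$ reduces to a lower-rank system only when $\operatorname{Span}(S)$ is a \emph{proper} subspace. When $S$ spans $V$ the induction gives nothing, and the directional argument also fails, because the unit vectors $v_k/|v_k|$ need not converge; subsequential limits are only orthogonal to the roots fired at those particular times, so one cannot conclude the limit direction is orthogonal to all of $S$. This spanning case is precisely the hard one, and it is where the paper uses the idea your proposal is missing: the Weyl vector. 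The paper's Proposition~\ref{prop:centraltermination} takes $\phi(\lambda)=\langle 2\rho-\lambda,2\rho-\lambda\rangle$ and notes that $\langle 2\rho,\alpha\rangle\geq\langle\alpha,\alpha\rangle$ for every \emph{positive} root $\alpha$; hence at a firing $\lambda\raPP\lambda+\alpha$ (where $\langle\lambda,\alpha\rangle=0$) one gets $\phi(\lambda)-\phi(\lambda+\alpha)=2\langle 2\rho,\alpha\rangle-\langle\alpha,\alpha\rangle\geq\langle\alpha,\alpha\rangle\geq\min_{\beta\in\Phi^+}\langle\beta,\beta\rangle>0$, and since $\phi\geq 0$ the vector-level process terminates; your lifting step then transfers this to $P/W$. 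In your language the same idea reads: the linear functional $\langle\cdot,2\rho\rangle$ increases by at least a fixed constant per step, so it grows linearly in $k$, while $|v_k|^2$ increases by at most a bounded amount per step, so $|v_k|=O(\sqrt{k})$; this contradicts $\langle v_k,2\rho\rangle\leq|2\rho|\,|v_k|$. Note that positivity of the fired roots is essential here, which is exactly why the lifting proposition must be proved in the form ``there is a positive-root firing from the chosen representative.'' With this potential inserted, your proof becomes complete and is then the same as the paper's.
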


See Section~\ref{sec:background} for a definition of the weight lattice $P\subseteq V$, and  Section~\ref{sec:confl-central-firing} for a proof of Theorem~\ref{thm:UCF_confluence_intro}. In \emph{simply laced types}, unlabeled central-firing admits a simple description as a certain number game on the \emph{Dynkin diagram} of $\Phi$, and we show in this case that it has the \emph{abelian property}, just as does classical chip-firing (see e.g.~\cite[\S1.2.1]{corry2017divisors}). 

We then concentrate on the following natural question.

\begin{question}\label{question:centralconf}
  Given a root system $\Phi$ and a point $v\in V$, when is $\Phi$ confluent from $v$?
\end{question}

We will see later that for example for $\Phi=A_{2n}$ with $n\geq 1$, central-firing is not confluent from $0$; however, we conjecture that it is confluent starting from the \emph{fundamental weight} $\omega_n\in V$. In terms of chip configurations, $\omega_n$ corresponds to placing chips \chip1 through~\chip n at position~$1$ while leaving the rest of the $n+1$ chips at the origin. Thus, for example, we conjecture that the result of applying Propp's labeled chip-firing moves to the chip configuration below is independent on the firing sequence as well:
\begin{center}
\begin{tikzpicture}[scale=\chiptikzscl,block/.style={draw,circle, minimum width={width("11")+12pt},
font=\small,scale=\chipscl}]
    \foreach \x in {-2,-1,...,2} {%
      \node[anchor=north] (A\x) at (\x,0) {$\x$};
    }
    \draw (-2.2,0) -- (2.2,0);
    \draw[dashed] (-5,0) -- (-2.2,0);
     \draw[dashed] (5,0) -- (2.2,0);
    \foreach[count=\i] \a/\b in {1/1,1/2,1/3,1/4,0/1,0/2,0/3,0/4,0/5} {%
      \node[block] at (\a,{\b*\chipscl*\chipcoef-0.5*\chipscl*\chipcoef}) {$\i$};
      }
\end{tikzpicture}
\end{center}

We introduce similar chip-firing moves that correspond to root systems of other classical types (i.e., Types B, C, and D); see Section~\ref{sec:labeled-chip-firing-B-C-D}. 

Based on our computer experiments, providing even a conjectural answer to Question~\ref{question:centralconf} appears to be very hard. Rather than studying confluence of central-firing from arbitrary $v\in V$, we restrict ourselves to the case when $v$ is either the origin or a \emph{fundamental weight} for $\Phi$. Fundamental weights are certain special vectors in $V$ that correspond to the nodes of the Dynkin diagram. We denote the set of fundamental weights by $\Omega$. (See Section~\ref{sec:background} for more root system background.) We put forward a complete conjectural classification of confluence starting from the points in $\init$ (see Conjecture~\ref{conj:master_central}) and prove it in many cases. The set of weights from which central-firing is confluent seems to have a quite complicated structure in general, but (as Conjecture~\ref{conj:master_central} hints) there also appear to be interesting patterns here. In particular, to first order, confluence seems to have to do with whether the initial point is equal to the \emph{Weyl vector} $\rho\in V$ modulo the \emph{root lattice}. For example, in Types $A_{2n-1}$ and $A_{2n}$ we have respectively
\[\rho=\left(\frac{n-1}2,\frac{n-3}2,\dots,-\frac{n-1}2\right)\in\R^{2n} \quad \text{and}\quad \rho=(n,n-1,\dots,-n)\in\R^{2n+1}.\]
Thus $\rho$ is not a $\Z$-linear combination of the roots $e_i-e_j$ in $A_{2n-1}$, but in $A_{2n}$ it is. As we mentioned earlier, central-firing is confluent from $0\in V$ for $\Phi=A_{2n-1}$, but for $\Phi=A_{2n}$ it is not. This pattern seems to dictate confluence in the vast majority of cases that we consider. 

\begin{remark}
Denote by $\<\cdot,\cdot\>$ the standard inner product on $V$. Then one can make a central-firing move from $v$ to $v+\alpha$ if and only if $v$ is orthogonal to $\alpha$, i.e., if $\<v,\alpha^\vee\>=0$, where $\alpha^\vee$ is the \emph{coroot} corresponding to $\alpha$. In the first paper in this series~\cite{galashin2017rootfiring1} we showed that, for any root system, after replacing this condition by $\<v,\alpha^\vee\>=-1$ or by $\<v,\alpha^\vee\>\in\{-1,0\}$, the process becomes confluent from \emph{all} initial weights of the root system. In contrast, the condition~$\<v,\alpha^\vee\>=0$ yields a process that is confluent from some initial weights but not confluent from other initial weights, and the pattern of confluence and non-confluence seems quite complicated.
\end{remark}

Let us now give the general outline of the paper. We review background on root systems and formally define central-firing in Section~\ref{sec:background}. In Section~\ref{sec:labeled-chip-firing-B-C-D}, we interpret the central-firing moves as well as the initial configurations corresponding to the fundamental weights in terms of chips for $\Phi$ of Type A, B, C, or D.  In  Section~\ref{sec:confl-central-firing}, we describe the root system generalization of \emph{unlabeled} chip-firing on a line (obtained by considering the same process modulo the Weyl group) and prove that it is confluent from any initial configuration (Corollary~\ref{cor:unlabeled_confluent}). For simply laced types, we give an explicit combinatorial description of this process in Section~\ref{sec:unlab-centr-firing}. We also show that in this case, the unlabeled central-firing has the \emph{abelian property} (see Theorem~\ref{thm:abelian}). In Section~\ref{sec:span-central-firing}, we study the question of which weights are \emph{connected}, in the sense that central-firing starting from that weight ``spans'' the whole vector space, and apply our results to the case of unlabeled chip-firing on a line in Section~\ref{sec:interpr-terms-chips}. Finally, in Section~\ref{sec:confl-centr-firing} we give some results and conjectures regarding the confluence of central-firing starting from a point in~$\init$, including the main Conjecture~\ref{conj:master_central} that completely describes from which points in $\init$ the central-firing process is confluent. 

This paper is a sequel to~\cite{galashin2017rootfiring1}, where certain deformations of central-firing, called \emph{interval-firing processes}, were introduced and studied. This paper can be read independently from~\cite{galashin2017rootfiring1} and assumes less familiarity with the theory of root systems.

\medskip

\noindent {\bf Acknowledgements:} We thank Jim Propp, both for several useful conversations and because his introduction of labeled chip-firing and his infectious enthusiasm for exploring its properties launched this project. We also thank the anonymous referee for useful comments. The second author was supported by NSF grant~\#1122374.

\section{Background on root systems and the main definition}\label{sec:background}
In this section, we fix notation and recall a few facts from the theory of root systems.

\subsection{Root systems}
We follow the exposition given in~\cite{galashin2017rootfiring1} and we refer the reader to that paper for references for the facts that we mention.

Let us fix a real vector space $V$ of dimension~$n$ with inner product $\<\cdot,\cdot\>$. Given a nonzero vector $\alpha\in V\setminus \{0\}$, define $\alpha^\vee:=\frac{2\alpha}{\<\alpha,\alpha\>}$. The orthogonal reflection with respect to the hyperplane orthogonal to $\alpha$ is given by $s_\alpha(v):=v-\<v,\alpha^\vee\>\alpha.$
We are now ready to recall the definition of a (reduced, crystallographic) \emph{root system}.

\begin{definition}
A \emph{root system} is a finite subset $\Phi \subseteq V\setminus \{0\}$ of nonzero vectors of $V$ such that:
\begin{enumerate}
\item the vectors of $\Phi$ span $V$;
\item $s_{\alpha}(\Phi) = \Phi$ for all $\alpha \in \Phi$;
\item $(\R\cdot\alpha) \cap \Phi = \{\pm \alpha\}$ for all $\alpha \in \Phi$;
\item $\<\beta,\alpha^\vee\> \in \mathbb{Z}$ for all $\alpha,\beta \in \Phi$.
\end{enumerate} 
\end{definition}

From now on, fix a root system $\Phi$ in $V$. The vectors $\alpha$ and $\alpha^\vee$ for $\alpha\in \Phi$ are called \emph{roots} and \emph{coroots} respectively. We denote by $W$ the \emph{Weyl group} of $\Phi$, i.e., the group generated by the reflections $s_{\alpha}$ for $\alpha \in \Phi$.

We fix a set $\Delta=\{\alpha_1,\dots,\alpha_n\} \subseteq \Phi$ of \emph{simple roots}. Simple roots form a basis of~$V$ and divide the root system $\Phi = \Phi^{+} \cup \Phi^{-}$ into \emph{positive} roots $\Phi^{+}$ and \emph{negative} roots $\Phi^{-} := -\Phi^{+}$. Any positive root $\alpha \in \Phi^{+}$ is a linear combination of simple roots with nonnegative integer coefficients. Because we have fixed a set of simple roots $\Delta$, we have thus also fixed a set of positive roots $\Phi^{+}$.

The \emph{Dynkin diagram} $\DD$ of $\Phi$ is a certain graph with vertex set $[n]:=\{1,2,\dots,n\}$ and with edges defined as follows:
\begin{itemize}
\item if $\<\alpha_i,\alpha_j^\vee\>=0$ then $i$ and $j$ are not connected in $\DD$;
\item if $\<\alpha_i,\alpha_j^\vee\>=\<\alpha_j,\alpha_i^\vee\>=1$ then $i$ and $j$ are connected by one undirected edge;
\item otherwise, we have $\<\alpha_i,\alpha_j^\vee\>=1$ and $\<\alpha_j,\alpha_i^\vee\>=k$ for some $k>1$, in which case we draw $k$ directed edges from $i$ to $j$.
\end{itemize}
If all roots in $\Phi$ have the same length then we say that $\Phi$ is \emph{simply laced}. In this case, its Dynkin diagram only contains undirected edges.

We define the \emph{root lattice} $Q$ to be the set of all integer combinations of vectors in $\Phi$. The  \emph{weight lattice} $P$ is defined by
\[P := \{v\in V\colon  \<v,\alpha^\vee\> \in \mathbb{Z} \text{ for all $\alpha \in \Phi$}\}.\]
For each $i\in [n]$, define the \emph{fundamental weight} $\omega_i\in V$ by $\<\omega_i,\alpha^\vee_j\> = \delta_{i,j}$, where $\delta_{i,j}$ denotes the Kronecker delta. As we mentioned earlier, we denote the set of fundamental weights by $\Omega:=\{\omega_1,\dots,\omega_n\}$.

We say that a weight $\l$ is \emph{dominant} (resp., \emph{strictly dominant}) if $\<\l,\alpha_i^\vee\>\geq 0$ (resp., $\<\l,\alpha_i^\vee\>> 0$) for any $i\in [n]$. Thus dominant (resp., strictly dominant) weights are nonnegative (resp., positive) integer combinations of fundamental weights. The \emph{Weyl vector} $\rho$ is given by $\rho:=\sum_{i=1}^{n}\omega_i$. There is also a unique root $\theta\in\Phi^+$ called the \emph{highest root} such that, writing $\theta=\sum_{i=1}^{n}a_i\alpha_i$, the coefficients $a_i$ are maximized. If $\Phi$ is simply laced then $\theta$ is the unique root that is a dominant weight.

\def\nodescl{0.8}
\def\scll{0.3}
\begin{figure}
  \begin{tabular}{|c|c|c|}\hline
    {\begin{tikzpicture}
\node[scale=\scl,draw,circle] (1) at (-1,0) {};
\node[scale=\scl,draw,circle] (2) at (0,0) {};
\node[scale=\scl,draw,circle] (3) at (1,0) {};
\node[scale=\scl,draw,circle] (4) at (2,0) {};
\node[anchor=north,scale=\nodescl] at (1.south) {$1$};
\node[anchor=north,scale=\nodescl] at (2.south) {$2$};
\node[anchor=north,scale=\nodescl] at (3.south) {${n-1}$};
\node[anchor=north,scale=\nodescl] at (4.south) {$n$};
\draw[] (1)--(2);
\draw[,dashed] (2)--(3);
\draw[] (3)--(4);
\end{tikzpicture}} &
{\begin{tikzpicture}[decoration={markings,mark=at position 0.7 with {\arrow{>}}}]
\coordinate (1) at (-1,0) {};
\coordinate (2) at (0,0) {};
\draw[,double distance=2pt,postaction={decorate}] (2)--(1);
\node[scale=\scl,draw,circle,fill=white] (1) at (-1,0) {};
\node[scale=\scl,draw,circle,fill=white] (2) at (0,0) {};
\node[anchor=north,scale=\nodescl] at (1.south) {$1$};
\node[anchor=north,scale=\nodescl] at (2.south) {$2$};
\draw[] (1)--(2);
\end{tikzpicture}} &
{\begin{tikzpicture}[decoration={markings,mark=at position 0.7 with {\arrow{>}}}]
\node[scale=\scl,draw,circle] (1) at (-1,0) {};
\node[scale=\scl,draw,circle] (2) at (0,0) {};
\node[scale=\scl,draw,circle] (3) at (1,0) {};
\node[scale=\scl,draw,circle] (4) at (2,0) {};
\node[anchor=north,scale=\nodescl] at (1.south) {$1$};
\node[anchor=north,scale=\nodescl] at (2.south) {$2$};
\node[anchor=north,scale=\nodescl] at (3.south) {$3$};
\node[anchor=north,scale=\nodescl] at (4.south) {$4$};
\draw[] (1)--(2);
\draw[,double,postaction={decorate}] (2)--(3);
\draw[] (3)--(4);
\node[scale=\scl,draw,circle] (1) at (-1,0) {};
\node[scale=\scl,draw,circle] (2) at (0,0) {};
\node[scale=\scl,draw,circle] (3) at (1,0) {};
\node[scale=\scl,draw,circle] (4) at (2,0) {};
\end{tikzpicture}} \\
 $A_n$ & $G_2$ & $F_4$\\ \hline
{\begin{tikzpicture}[decoration={markings,mark=at position 0.7 with {\arrow{>}}}]
\coordinate (1) at (-1,0) {};
\coordinate (2) at (0,0) {};
\coordinate (3) at (1,0) {};
\coordinate (4) at (2,0) {};
\draw[] (1)--(2);
\draw[,dashed] (2)--(3);
\draw[,double,postaction={decorate}] (3)--(4);
\node[scale=\scl,draw,circle,fill=white] (1) at (-1,0) {};
\node[scale=\scl,draw,circle,fill=white] (2) at (0,0) {};
\node[scale=\scl,draw,circle,fill=white] (3) at (1,0) {};
\node[scale=\scl,draw,circle,fill=white] (4) at (2,0) {};
\node[anchor=north,scale=\nodescl] at (1.south) {$1$};
\node[anchor=north,scale=\nodescl] at (2.south) {$2$};
\node[anchor=north,scale=\nodescl] at (3.south) {${n-1}$};
\node[anchor=north,scale=\nodescl] at (4.south) {$n$};
\end{tikzpicture}} &
\multicolumn{2}{c|}{\begin{tikzpicture}
\node[scale=\scl,draw,circle] (1) at (-1,0) {};
\node[scale=\scl,draw,circle] (2) at (1,0.5) {};
\node[scale=\scl,draw,circle] (3) at (0,0) {};
\node[scale=\scl,draw,circle] (4) at (1,0) {};
\node[scale=\scl,draw,circle] (5) at (2,0) {};
\node[scale=\scl,draw,circle] (6) at (3,0) {};
\node[anchor=north,scale=\nodescl] at (1.south) {$1$};
\node[anchor=east,scale=\nodescl] at (2.west) {$2$};
\node[anchor=north,scale=\nodescl] at (3.south) {$3$};
\node[anchor=north,scale=\nodescl] at (4.south) {$4$};
\node[anchor=north,scale=\nodescl] at (5.south) {$5$};
\node[anchor=north,scale=\nodescl] at (6.south) {$6$};
\draw[] (1)--(3)--(4)--(5)--(6);
\draw[] (2)--(4);
\end{tikzpicture}} \\
  $B_n$ & \multicolumn{2}{c|}{$E_6$}  \\\hline
{\begin{tikzpicture}[decoration={markings,mark=at position 0.7 with {\arrow{>}}}]
\coordinate (1) at (-1,0) {};
\coordinate (2) at (0,0) {};
\coordinate (3) at (1,0) {};
\coordinate (4) at (2,0) {};
\draw[] (1)--(2);
\draw[,dashed] (2)--(3);
\draw[,double,postaction={decorate}] (4)--(3);
\node[scale=\scl,draw,circle,fill=white] (1) at (-1,0) {};
\node[scale=\scl,draw,circle,fill=white] (2) at (0,0) {};
\node[scale=\scl,draw,circle,fill=white] (3) at (1,0) {};
\node[scale=\scl,draw,circle,fill=white] (4) at (2,0) {};
\node[anchor=north,scale=\nodescl] at (1.south) {$1$};
\node[anchor=north,scale=\nodescl] at (2.south) {$2$};
\node[anchor=north,scale=\nodescl] at (3.south) {${n-1}$};
\node[anchor=north,scale=\nodescl] at (4.south) {$n$};
\end{tikzpicture}} &
\multicolumn{2}{c|}{\begin{tikzpicture}
\node[scale=\scl,draw,circle] (1) at (-1,0) {};
\node[scale=\scl,draw,circle] (2) at (1,0.5) {};
\node[scale=\scl,draw,circle] (3) at (0,0) {};
\node[scale=\scl,draw,circle] (4) at (1,0) {};
\node[scale=\scl,draw,circle] (5) at (2,0) {};
\node[scale=\scl,draw,circle] (6) at (3,0) {};
\node[scale=\scl,draw,circle] (7) at (4,0) {};
\node[anchor=north,scale=\nodescl] at (1.south) {$1$};
\node[anchor=east,scale=\nodescl] at (2.west) {$2$};
\node[anchor=north,scale=\nodescl] at (3.south) {$3$};
\node[anchor=north,scale=\nodescl] at (4.south) {$4$};
\node[anchor=north,scale=\nodescl] at (5.south) {$5$};
\node[anchor=north,scale=\nodescl] at (6.south) {$6$};
\node[anchor=north,scale=\nodescl] at (7.south) {$7$};
\draw[] (1)--(3)--(4)--(5)--(6)--(7);
\draw[] (2)--(4);
\end{tikzpicture}} \\
  $C_n$ & \multicolumn{2}{c|}{$E_7$}  \\\hline
{\begin{tikzpicture}
\node[scale=\scl,draw,circle] (1) at (-1,0) {};
\node[scale=\scl,draw,circle] (2) at (0,0) {};
\node[scale=\scl,draw,circle] (3) at (1,0) {};
\node[scale=\scl,draw,circle] (4) at (2,-0.3) {};
\node[scale=\scl,draw,circle] (5) at (2,0.3) {};
\node[anchor=north,scale=\nodescl] at (1.south) {$1$};
\node[anchor=north,scale=\nodescl] at (2.south) {$2$};
\node[anchor=north,scale=\nodescl] at (3.south) {${n-2}$};
\node[anchor=north,scale=\nodescl] at (4.south) {${n-1}$};
\node[anchor=south,scale=\nodescl] at (5.north) {$n$};
\draw[] (1)--(2);
\draw[,dashed] (2)--(3);
\draw[] (3)--(4);
\draw[] (3)--(5);
\end{tikzpicture}} &
\multicolumn{2}{c|}{\begin{tikzpicture}
\node[scale=\scl,draw,circle] (1) at (-1,0) {};
\node[scale=\scl,draw,circle] (2) at (1,0.5) {};
\node[scale=\scl,draw,circle] (3) at (0,0) {};
\node[scale=\scl,draw,circle] (4) at (1,0) {};
\node[scale=\scl,draw,circle] (5) at (2,0) {};
\node[scale=\scl,draw,circle] (6) at (3,0) {};
\node[scale=\scl,draw,circle] (7) at (4,0) {};
\node[scale=\scl,draw,circle] (8) at (5,0) {};
\node[anchor=north,scale=\nodescl] at (1.south) {$1$};
\node[anchor=east,scale=\nodescl] at (2.west) {$2$};
\node[anchor=north,scale=\nodescl] at (3.south) {$3$};
\node[anchor=north,scale=\nodescl] at (4.south) {$4$};
\node[anchor=north,scale=\nodescl] at (5.south) {$5$};
\node[anchor=north,scale=\nodescl] at (6.south) {$6$};
\node[anchor=north,scale=\nodescl] at (7.south) {$7$};
\node[anchor=north,scale=\nodescl] at (8.south) {$8$};
\draw[] (1)--(3)--(4)--(5)--(6)--(7)--(8);
\draw[] (2)--(4);
\end{tikzpicture}} \\
  $D_n$ & \multicolumn{2}{c|}{$E_8$}  \\\hline
  \end{tabular}
\caption{Dynkin diagrams of all irreducible root systems.} \label{fig:dynkinclassification}
\end{figure}
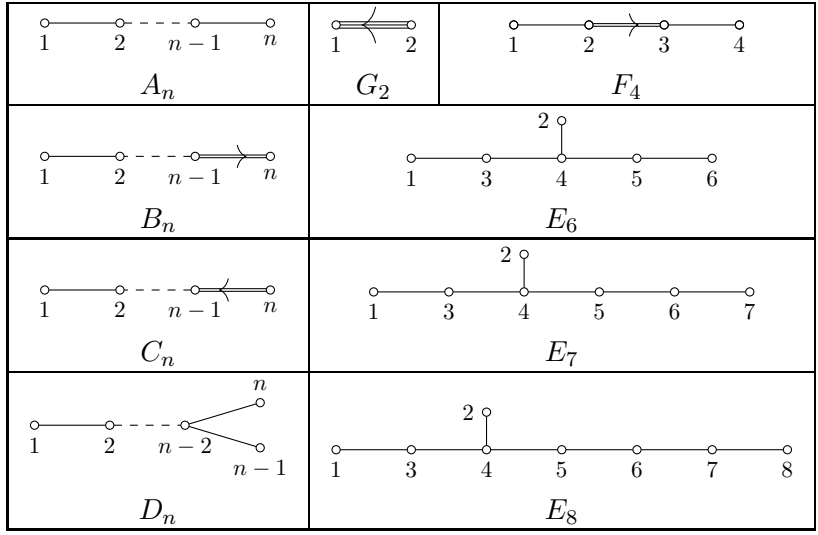

We say that $\Phi$ is \emph{irreducible} if its Dynkin diagram is connected. From now on in the paper we assume that $\Phi$ is an irreducible root system. Dynkin diagrams of all irreducible root systems are shown in Figure~\ref{fig:dynkinclassification}. The simple roots are numbered as in~\cite{bourbaki2002lie}.

For a subspace $H\subseteq V$ spanned by some roots of $\Phi$, $\Phi\cap H$ is another root system which we call a \emph{sub-root system of $\Phi$}. Given a subset $I\subseteq [n]$, we denote by $\Phi_{I}\subseteq \Phi$ the \emph{parabolic sub-root system} of $\Phi$ defined by $\Phi_{I}:=\Phi\cap \Span\{\alpha_i\colon i\in I\}$. We use $W_{I}$ to denote the corresponding \emph{parabolic subgroup} of $W$: this is the subgroup generated by~$s_{\alpha}$ for $\alpha \in \Phi_{I}$. For a dominant weight $\l\in P$, we denote $I_\l^0:=\{i\in [n]\colon \<\l,\alpha_i^\vee\>=0\}$.

Given a weight $\l\in P$, we denote by $\Pi(\l):=\mathrm{ConvexHull}(\{w\l\colon w\in W\})$ the \emph{permutohedron} associated to~$\l$. We let $\Pi^Q(\l):=\{\m\in\Pi(\l)\colon \l-\m\in Q\}$ denote its set of \emph{lattice points}. We say that a nonzero dominant weight $\l\in P$ is \emph{minuscule} if $\Pi^Q(\l)$ consists only of the vertices of $\Pi(\l)$, i.e., of the $W$-orbit of $\l$. We let $\Omega_m$ be the set of minuscule weights. It is known that $\Omega_m\subseteq\Omega$ and that every class in $P/Q$ contains a unique element from $\Omega^0_m := \Omega_m\cup\{0\}$.

\subsection{Root systems of classical type}\label{sec:root-syst-class}
Let us use the notation $\Z+\frac12:=\{a+\frac12\colon a\in\Z\}$. For $\Phi$ of \emph{classical type}, i.e., of Type A, B, C, or D, we use the following explicit realizations of $\Phi$.

\begin{itemize}
\item[$A_n$:] We let $V:=\R^{n+1}/\<(1,1,\dots,1)\>$ be the $n$-dimensional space of vectors in $\R^{n+1}$ considered modulo adding a multiple of the vector $(1,1,\dots,1)$. We often identify this space with the subspace of $\R^{n+1}$ where the sum of coordinates is equal to $0$. In particular, the inner product $\<\cdot,\cdot\>$ on $V$ comes from the identification with this subspace of $\R^{n+1}$. The weight lattice $P\subseteq V$ is given by $\Z^{n+1}/\<(1,1,\dots,1)\>$.  The roots of $\Phi$ are given by $e_i-e_j$ for all $i\neq j\in[n+1]$. The positive roots are the ones with $i<j$, and the simple roots are $\alpha_i:=e_i-e_{i+1}$ for $i\in [n]$.
\item[$B_n$:] We let $V:=\R^n$ and $P:=\Z^n\cup (\Z+\frac12)^n$. The roots of $\Phi$ are given by $\pm e_i\pm e_j$ for all $i< j\in[n]$ and $\pm e_i$ for all $i\in [n]$. The positive roots are $ e_i\pm e_j$ for all $i< j\in[n]$ and $e_i$ for all $i\in [n]$. The simple roots are $\alpha_i:=e_i-e_{i+1}$ for $1\leq i<n$ and $\alpha_n:=e_n$.
\item[$C_n$:] We let $V:=\R^n$ and $P:=\Z^n$. The roots of $\Phi$ are given by $\pm e_i\pm e_j$ for all $i< j\in[n]$ and $\pm 2e_i$ for all $i\in [n]$. The positive roots are $ e_i\pm e_j$ for all $i< j\in[n]$ and $2e_i$ for all $i\in [n]$. The simple roots are $\alpha_i:=e_i-e_{i+1}$ for $1\leq i<n$ and $\alpha_n:=2e_n$.
\item[$D_n$:] We let $V:=\R^n$ and $P:=\Z^n\cup (\Z+\frac12)^n$. The roots of $\Phi$ are given by $\pm e_i\pm e_j$ for all $i< j\in[n]$. The positive roots are $ e_i\pm e_j$ for all $i< j\in[n]$. The simple roots are $\alpha_i:=e_i-e_{i+1}$ for $1\leq i<n$ and $\alpha_n:=e_{n-1}+e_n$.
\end{itemize}

\subsection{Main definition}\label{sec:main_def}
We define \emph{central-firing} to be a binary relation $\raPP$ on $P$: for a weight~$\lambda \in P$, we have that~$\l\raPP \l+\alpha$ whenever $\alpha\in\Phi^+$ is such that $\<\l,\alpha^\vee\>=0$. 

Given a binary relation $\ra$ on a set $X$, we denote by $\raAst$ its reflexive, transitive closure and we say that $\ra$ is \emph{confluent from~$x\in X$} if for any $y,y'\in X$ such that~$x\raAst y$ and~$x\raAst y'$, there exists $z\in X$ such that~$y\raAst z$ and~$y'\raAst z$. We say that $\ra$ is \emph{confluent} if it is confluent from any $x\in X$, and we call it \emph{terminating} if there exists no infinite sequence $x_0,x_1,\dots \in X$ such that $x_i\ra x_{i+1}$ for all $i\geq 0$. We say $x \in X$ is \emph{$\ra$-stable} if there is no $y \in X$ with $x\ra y$. If $\ra$ is confluent from $x \in X$ and is terminating, then there is a unique stable $y \in X$ with $x \raAst y$ called the \emph{$\ra$-stabilization} of $x$. The proof of the following result is analogous to that of~\cite[\centraltermination]{galashin2017rootfiring1}.

\begin{prop}\label{prop:centraltermination}
  For any root system~$\Phi$, the relation $\raPP$ is terminating.
\end{prop}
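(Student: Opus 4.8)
The plan is to exhibit an explicit upper bound, depending only on the starting weight $\l_0\in P$, on the length of any central-firing sequence $\l_0\raPP\l_1\raPP\cdots$; such a bound immediately shows there can be no infinite sequence, which is exactly the assertion that $\raPP$ is terminating. The natural candidate for a monotone quantity is the squared norm $\|\l\|^2=\<\l,\l\>$. Indeed, if $\l\raPP\l+\alpha$, then the firing condition $\<\l,\alpha^\vee\>=0$ is equivalent to $\<\l,\alpha\>=0$, because $\alpha^\vee$ is a positive multiple of $\alpha$; hence
\[\|\l+\alpha\|^2=\|\l\|^2+2\<\l,\alpha\>+\|\alpha\|^2=\|\l\|^2+\|\alpha\|^2,\]
so each move strictly increases the norm by $\|\alpha\|^2>0$. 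By itself this monotonicity is not enough: the weight lattice is infinite, so a priori the norm could grow without bound along an infinite sequence. The crux of the argument is to upgrade this strict increase into a genuine bound on the number of moves.

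To this end, suppose $\l_0\raPP\l_1\raPP\cdots\raPP\l_k$ with $\l_{j}=\l_{j-1}+\alpha_{(j)}$ for positive roots $\alpha_{(1)},\dots,\alpha_{(k)}\in\Phi^+$. Telescoping the norm identity gives
\[\|\l_k\|^2=\|\l_0\|^2+\sum_{j=1}^k\|\alpha_{(j)}\|^2.\]
Writing $\beta:=\l_k-\l_0=\sum_{j=1}^k\alpha_{(j)}$ (a nonnegative integer combination of positive roots) and expanding $\|\l_k\|^2=\|\l_0+\beta\|^2$, a comparison with the displayed identity yields the exact relation
\[\|\beta\|^2+2\<\l_0,\beta\>=\sum_{j=1}^k\|\alpha_{(j)}\|^2\le Mk,\qquad M:=\max_{\alpha\in\Phi}\|\alpha\|^2.\]

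Finally I would control $k$ by the displacement $\beta$ using the height functional. Let $h\in V$ represent the linear functional sending each simple root to $1$, so that $\<h,\alpha\>=\mathrm{ht}(\alpha)\ge 1$ for every $\alpha\in\Phi^+$. Then $k\le\sum_{j=1}^k\mathrm{ht}(\alpha_{(j)})=\<h,\beta\>\le\|h\|\,\|\beta\|$. Substituting $Mk\le M\|h\|\,\|\beta\|$ into the previous relation and applying Cauchy--Schwarz to $\<\l_0,\beta\>$ gives $\|\beta\|^2\le M\|h\|\,\|\beta\|+2\|\l_0\|\,\|\beta\|$, whence $\|\beta\|\le M\|h\|+2\|\l_0\|$ is bounded in terms of $\l_0$ alone (the degenerate case $\beta=0$ forces $k=0$, since a nonempty sum of positive roots cannot vanish). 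Consequently $k\le\|h\|\,\|\beta\|\le\|h\|\bigl(M\|h\|+2\|\l_0\|\bigr)$, so every central-firing sequence from $\l_0$ has length at most this quantity, and no infinite sequence exists.

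I expect the main obstacle to be conceptual rather than computational: the monotone quantity $\|\l\|^2$ is immediate from orthogonality, but the decisive step is recognizing that strict monotonicity must be converted into a quantitative length bound, and that the displacement $\beta$ -- being a sum of positive roots -- can be measured against the height functional to close the loop. The remaining estimates are elementary, and the whole argument runs parallel to that of \centraltermination{} in~\cite{galashin2017rootfiring1}.
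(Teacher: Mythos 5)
Your proposal is correct --- every step checks out: the equivalence $\<\lambda,\alpha^\vee\>=0\Leftrightarrow\<\lambda,\alpha\>=0$, the telescoped identity, the existence of the height vector $h$ (dual basis to the simple roots), the treatment of the degenerate case $\beta=0$ via positivity of heights, and the final Cauchy--Schwarz bootstrap --- but it is organized genuinely differently from the paper's proof. The paper exhibits a single nonnegative potential
\[\phi(\lambda)=\<2\rho-\lambda,\,2\rho-\lambda\>\]
and verifies in two lines that each firing move decreases it by at least $\min_{\alpha\in\Phi^+}\<\alpha,\alpha\>>0$, using orthogonality together with the standard fact $\<\rho,\alpha^\vee\>\geq 1$ for $\alpha\in\Phi^+$; termination is then immediate since $\phi\geq 0$. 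Your argument keeps the two ingredients separate: the exact quadratic identity (each move increases $\<\lambda,\lambda\>$ by $\<\alpha,\alpha\>$) and a linear functional positive on all positive roots (your $h$, which counts moves via heights), and you then play these against each other with Cauchy--Schwarz to bound the displacement $\beta$ and hence the number of moves $k$. In effect the paper's potential is the completed square that merges your two ingredients into one monotone quantity: expanding, $\phi(\lambda)=\<\lambda,\lambda\>-4\<\rho,\lambda\>+4\<\rho,\rho\>$, so the paper pits the linear functional $\<2\rho,\cdot\>$ against the quadratic growth all at once rather than in two stages (with $\rho$, normalized against coroots, in the role of your $h$, normalized against roots). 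What your route buys is an explicit a priori bound $k\leq\|h\|\bigl(M\|h\|+2\|\lambda_0\|\bigr)$ requiring nothing beyond the existence of a dual basis vector; what the paper's route buys is brevity --- a monotone bounded quantity with no bootstrapping, which also yields an explicit bound ($\phi(\lambda_0)$ divided by the minimal squared root length) if one wants it.
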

\begin{proof}
For~$\lambda \in P$, consider the function $\phi(\lambda) :=  \<2\rho-\lambda,  2\rho-\lambda\>$. Suppose that $\lambda\raPP \lambda+\alpha$ for some $\alpha\in\Phi^+$. Thus $\<\alpha,\lambda\>=0$; together with $\<2\rho,\alpha\>=\<2\rho,\frac{\<\alpha,\alpha\>}2\alpha^\vee\>\geq \<\alpha,\alpha\>$, we get that
\[\phi(\lambda) - \phi(\lambda+\alpha) = \< 2\rho-\lambda, 2\rho-\lambda\>- \< 2\rho-(\lambda+\alpha), 2\rho-(\lambda+\alpha)\>=\< 4\rho-\alpha,\alpha\>\geq \<\alpha,\alpha\>.\]
Thus after each firing move, $\phi(\lambda)$ decreases by at least $\min_{\alpha\in\Phi^+}\<\alpha,\alpha\>>0$, and since the quantity $\phi(\lambda)$ is manifestly nonnegative, we see that $\raPP$ is terminating.
\end{proof}

\section{Labeled chip-firing for classical types}\label{sec:labeled-chip-firing-B-C-D}

We will consider configurations of chips on $\Z$ and various chip-firing moves between them. Let us introduce the moves that will describe the relations $\raPP$ for all~$\Phi$ of classical type.

\begin{definition}
As explained in the introduction, a configuration of $N$ chips on the infinite path graph~$\mathbb{Z}$ corresponds to a vector $v=(v_1,\ldots,v_N)\in \Z^N$ where $v_i$ is the position of \chip i. We also want to sometimes consider vectors $v\in(\Z+\frac12)^N$, which we think of as configurations of $N$ labeled chips on the graph~$\Z+\frac12$ which is isomorphic to $\Z$ but has vertex labels shifted by $\frac12$. Given a configuration $v\in\Z^N$ or $v\in(\Z+\frac12)^N$, we define the following four types of \emph{moves}:
  \begin{enumerate}[(a)]
  \item\label{move:A} for $i<j$, if chips~\chip i and \chip j are in the same position (i.e., $v_i=v_j$), move chip~\chip i one step to the right (i.e., increase $v_i$ by $1$) and chip~\chip j one step to the left (i.e., decrease $v_j$ by $1$);
  \item\label{move:B} for $i\in[N]$, if chip~\chip i is at the origin (i.e., $v_i=0$), move it one step to the right;
  \item\label{move:C} for $i\in[N]$, if chip~\chip i is at the origin (i.e., $v_i=0$), move it two steps to the right;
  \item\label{move:D} for $i<j$, if chips~\chip i and \chip j are in the opposite positions (i.e., $v_i=-v_j$), move both chips one step to the right.
  \end{enumerate}
\end{definition}

The following interpretation is clear from the explicit constructions realizing the corresponding root system in $\R^N$ given in Section~\ref{sec:root-syst-class}.
\begin{prop}
  Two chip configurations $u,v\in\Z^N$ satisfy $u\raPPAst v$ if and only if $v$ can be obtained from $u$ by applying 
  \begin{itemize}
  \item the moves~\eqref{move:A}, if $\Phi$ is of Type $A_{N-1}$;
  \item the moves~\eqref{move:A},~\eqref{move:B}, and~\eqref{move:D}, if $\Phi$ is of Type $B_{N}$;
  \item the moves~\eqref{move:A},~\eqref{move:C}, and~\eqref{move:D}, if $\Phi$ is of Type $C_{N}$;
  \item the moves~\eqref{move:A} and~\eqref{move:D}, if $\Phi$ is of Type $D_{N}$.
  \end{itemize}
  Note that for $\Phi$ of Type $A_{N-1}$ we need to consider $u$ and $v$ modulo $\<(1,1,\ldots,1)\>$ and observe that the moves~\eqref{move:A} are still well-defined modulo $\<(1,1,\ldots,1)\>$.
\end{prop}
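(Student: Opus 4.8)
The plan is to reduce the statement about the closure $\raPPAst$ to a single-step correspondence and then verify that step by a direct, type-by-type reading of the positive roots. Because $\raPPAst$ is by definition the reflexive-transitive closure of $\raPP$, and because ``$v$ is obtained from $u$ by applying a sequence of the listed moves'' likewise means chaining individual moves one after another, it suffices to establish the correspondence at the level of a \emph{single} step. Concretely, I would prove that for $\l\in\Z^N$ (resp.\ for $\l\in\Z^N$ modulo $\<(1,\dots,1)\>$ in type $A$) a single central-firing relation $\l\raPP\l+\alpha$ with $\alpha\in\Phi^+$ is available exactly when one of the enumerated chip-moves for that type is available, and that the two prescribe the same new configuration. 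The full biconditional for $\raPPAst$ then follows by a trivial induction on the number of firings.

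The key simplifying observation is that $\alpha^\vee=\tfrac{2\alpha}{\<\alpha,\alpha\>}$ is a \emph{positive} scalar multiple of $\alpha$, so the firing condition $\<\l,\alpha^\vee\>=0$ is equivalent to $\<\l,\alpha\>=0$; the coroot normalization therefore never affects \emph{when} a move is legal. Since the move adds the root $\alpha$ itself (not the coroot), in coordinates it is simply $v\mapsto v+\alpha$. Thus the entire proof reduces to reading off, for each positive root $\alpha$ in each classical realization of Section~\ref{sec:root-syst-class}, what the conditions $\<v,\alpha\>=0$ and $v\mapsto v+\alpha$ say about the chip positions.

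Carrying this out type by type: in every classical type the roots $e_i-e_j$ with $i<j$ give $\<v,\alpha\>=0\iff v_i=v_j$ and $v\mapsto v+(e_i-e_j)$, which is exactly move~\eqref{move:A}; the roots $e_i+e_j$ with $i<j$, present in types $B$, $C$, $D$, give $\<v,\alpha\>=0\iff v_i=-v_j$ and $v\mapsto v+(e_i+e_j)$, which is move~\eqref{move:D}. The remaining roots are the ``extra'' ones distinguishing the types: in type $B$ the short roots $e_i$ have coroot $2e_i$, so $\<v,(e_i)^\vee\>=0\iff v_i=0$ and the move $v\mapsto v+e_i$ is move~\eqref{move:B}; in type $C$ the long roots $2e_i$ have coroot $e_i$, so again $v_i=0$ but the move $v\mapsto v+2e_i$ is move~\eqref{move:C}; type $D$ has no such extra roots. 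Matching these against the list of positive roots recorded for each type in Section~\ref{sec:root-syst-class} yields exactly the four advertised sets of moves.

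The verification is essentially bookkeeping, and I expect the only points requiring care to be two. First, the non-simply-laced roots $e_i$ (type $B$) and $2e_i$ (type $C$): one must not conflate root and coroot, noting that although the legality condition collapses to $v_i=0$ in both cases, the displacement is by one step versus two steps, which is precisely what separates moves~\eqref{move:B} and~\eqref{move:C}. Second, the type $A$ quotient: I would check that both the predicate $v_i=v_j$ and the update $v\mapsto v+(e_i-e_j)$ descend to well-defined operations on $\R^N/\<(1,\dots,1)\>$, which holds because $e_i-e_j$ is orthogonal to $(1,\dots,1)$ and adding $e_i-e_j$ commutes with adding any multiple of $(1,\dots,1)$; this is the content of the closing remark in the statement.
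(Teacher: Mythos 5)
Your proposal is correct and is essentially the argument the paper has in mind: the paper offers no written proof beyond the remark that the interpretation ``is clear from the explicit constructions realizing the corresponding root system in $\R^N$,'' and your type-by-type matching of positive roots to moves (a)--(d), together with the coroot-versus-root observation and the Type~A quotient check, is precisely the routine verification being left implicit. Nothing is missing; you have simply spelled out the bookkeeping the authors omit.
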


Let us now also describe the initial configurations that correspond to the weights in the set $\init$. For each $\Phi$ of classical type, the zero weight corresponds to the configuration of $N$ chips at the origin. For $\Phi$ of Types $A_{N-1}$, $B_N$, or $C_N$, the fundamental weight $\omega_i$, $1\leq i\leq N-1$, corresponds to the configuration of the first $i$ chips at position~$1$ and the last $N-i$ chips at the origin. For $\Phi$ of Type $B_N$, the weight $\omega_N$ corresponds to all chips being at position $\frac12$. For $\Phi$ of Type $C_N$, the weight $\omega_N$ corresponds to all chips being at position $1$. Finally, for $\Phi$ of Type $D_N$, the fundamental weight $\omega_i$, $1\leq i\leq N-2$ corresponds to the configuration of the first $i$ chips being at position $1$ and the remaining $N-i$ chips being at the origin, $\omega_N$ corresponds to all chips being at position $\frac12$, and $\omega_{N-1}$ differs from $\omega_n$ only in the position of chip~\chip N which is at position $-\frac12$. See Figure~\ref{fig:initial_confs} for an illustration. We note that for each initial configuration $v$ in Figure~\ref{fig:initial_confs}, central-firing for each of the listed root systems is confluent from $v$ (see Remark~\ref{rmk:confluence_classical}).

\begin{remark}
For $\Phi=A_{N-1}$ the weight lattice is $P=\mathbb{Z}^N/\<(1,1,\dots,1)\>$ and not~$\mathbb{Z}^N$; however, the central-firing process can be lifted in an obvious way to all of $\mathbb{Z}^N$ and in this way we precisely recover Propp's original labeled chip-firing on~$\mathbb{Z}$. The fact that we can mod out by $(1,1,\dots,1)$ is reflected in the fact that the labeled chip-firing process is unchanged if we translate all chips to the left or to the right by the same amount.
\end{remark}

\begin{remark}\label{rmk:half_integers}
When the coordinates of chips are half-integers, one can never perform moves~\eqref{move:B} and~\eqref{move:C}. Thus for example the central-firing processes for $\Phi$ of Type $B_N$ or $D_N$ starting from $\omega_N$ are identical. We shall later see that they are conjecturally both confluent for each $N$.
\end{remark}

 We believe that this chip interpretation will help prove some parts of Conjecture~\ref{conj:master_central} below, because it allows chip-firing arguments similar to those used for the usual (i.e., Type A) labeled chip-firing in~\cite{hopkins2017sorting} to be applied to the other types as well.
 
\begin{figure}

\def\chiptikzscl{0.7}
  \def\chipscl{0.6}
  \def\chipcoef{1.2}
\def\dashed{2}
  
\begin{tabular}{|c|c|c|c|}\hline

\begin{tikzpicture}[scale=\chiptikzscl,block/.style={draw,circle, minimum width={width("11")+12pt},
font=\small,scale=\chipscl}]
    \foreach \x in {-1,...,1} {%
      \node[anchor=north] (A\x) at (\x,0) {$\x$};
    }
    \draw[dashed] (-\dashed,0) -- (\dashed,0);
    \draw (-1.2,0) -- (1.2,0);
    \foreach[count=\i] \a/\b in {1/1,1/2,1/3,0/1,0/2,0/3,0/4} {%
      \node[block] at (\a,{\b*\chipscl*\chipcoef-0.5*\chipscl*\chipcoef}) {$\i$};
      }
\end{tikzpicture} &
\begin{tikzpicture}[scale=\chiptikzscl,block/.style={draw,circle, minimum width={width("11")+12pt},
font=\small,scale=\chipscl}]
      \node[anchor=north] (A1) at (0.5,0) {$\frac12$};
      \node[anchor=north] (Am1) at (-0.5,0) {$-\frac12$};
    \draw[dashed] (-\dashed,0) -- (\dashed,0);
    \draw (-1.2,0) -- (1.2,0);
    \foreach[count=\i] \a/\b in {0.5/1,0.5/2,0.5/3,0.5/4,0.5/5} {%
      \node[block] at (\a,{\b*\chipscl*\chipcoef-0.5*\chipscl*\chipcoef}) {$\i$};
      }
\end{tikzpicture} &
\begin{tikzpicture}[scale=\chiptikzscl,block/.style={draw,circle, minimum width={width("11")+12pt},
font=\small,scale=\chipscl}]
    \foreach \x in {-1,...,1} {%
      \node[anchor=north] (A\x) at (\x,0) {$\x$};
    }
    \draw[dashed] (-\dashed,0) -- (\dashed,0);
    \draw (-1.2,0) -- (1.2,0);
    \foreach[count=\i] \a/\b in {1/1,1/2,1/3,1/4,1/5,1/6} {%
      \node[block] at (\a,{\b*\chipscl*\chipcoef-0.5*\chipscl*\chipcoef}) {$\i$};
      }
      \node at (0,4.4) {\,}; 
\end{tikzpicture} &
\begin{tikzpicture}[scale=\chiptikzscl,block/.style={draw,circle, minimum width={width("11")+12pt},
font=\small,scale=\chipscl}]
      \node[anchor=north] (A1) at (0.5,0) {$\frac12$};
      \node[anchor=north] (Am1) at (-0.5,0) {$-\frac12$};
    \draw[dashed] (-\dashed,0) -- (\dashed,0);
    \draw (-1.2,0) -- (1.2,0);
    \foreach[count=\i] \a/\b in {0.5/1,0.5/2,0.5/3,0.5/4,-0.5/1} {%
      \node[block] at (\a,{\b*\chipscl*\chipcoef-0.5*\chipscl*\chipcoef}) {$\i$};
      }
    \end{tikzpicture}  \\
  $\omega_3$ for $\Phi$ of & $\omega_5$ for $\Phi$ of & $\omega_6$ for $\Phi$ of & $\omega_4$ for $\Phi$ of \\
    Type~$A_6$, $B_7$, or $C_7$ & Type~$B_5$ or $D_5$ & Type~$C_6$ & Type~$D_5$\\\hline
\end{tabular}

  \caption{\label{fig:initial_confs} Examples of initial chip configurations corresponding to some weights in $\init$.}
\end{figure}
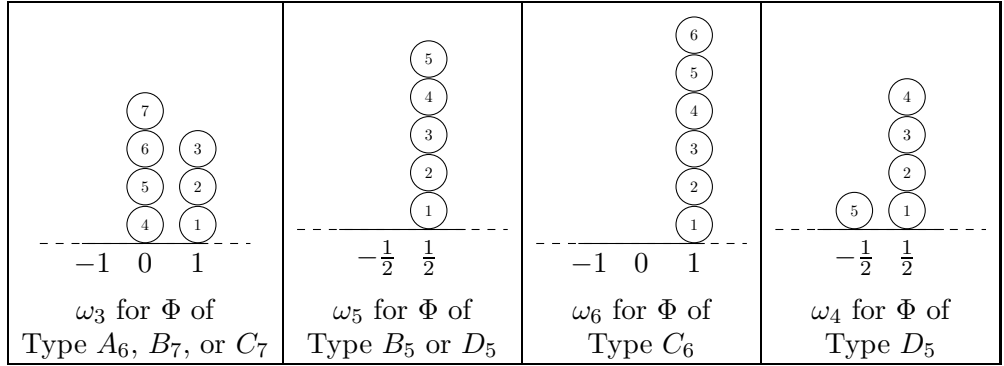

\section{Confluence of central-firing modulo the Weyl group}\label{sec:confl-central-firing}
 
Let $X$ be a set, $\ra$ a binary relation, and $G$ a group acting on $X$. For $x \in X$, we write $\G x$ to denote the orbit of $x$ under $G$, and we write $X/G$ for the set of orbits of $X$ under $G$. The relation $\ra$ descends to a relation, also denoted $\ra$, on $X/G$ as follows: we have $\G x \ra \G y$ if and only if there exists $x' \in \G x$ and $y' \in \G y$ such that~$x' \ra y'$. Note that the notation  $\G x \raAst \G y$ is inherently ambiguous because it is not clear if it means that we mod out by the group action before or after taking the reflexive transitive closure. In what follows will take $\G x \raAst \G y$ to mean that there exists $t \geq 0$ and $x_0,x_1,\ldots,x_t \in X$ such that 
\[\G x = \G{x_0} \ra \G{x_1} \ra \cdots \ra \G{x_t}= \G y.\]
However, in the case that we care about, central-firing modulo the Weyl group, this ambiguity is actually irrelevant and the two possible interpretations coincide as the next proposition shows. Of course, when $\Phi=A_{N-1}$, the relation $\raPP$ on $P/W$ corresponds exactly to unlabeled chip-firing of $N$ chips on a line. We refer to the relation $\raPP$ on~$P/W$ as \emph{unlabeled central-firing}.

\begin{prop}\label{prop:rapp_lift}
For $\lambda, \mu \in P$, we have $\W\lambda \raPP \W\mu$ if and only if there is $\mu' \in \W\mu$ with $\lambda  \raPP \mu'$.
\end{prop}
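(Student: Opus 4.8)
The plan is to observe that the \textbf{if} direction is immediate: if $\lambda \raPP \mu'$ for some $\mu' \in \W\mu$, then taking $\lambda' := \lambda \in \W\lambda$ exhibits representatives $\lambda' \in \W\lambda$ and $\mu' \in \W\mu$ with $\lambda' \raPP \mu'$, which is exactly the definition of $\W\lambda \raPP \W\mu$. So the entire content is the \textbf{only if} direction, where I must upgrade an arbitrary pair of representatives witnessing $\W\lambda \raPP \W\mu$ into one that fires from $\lambda$ itself. First I would unpack the hypothesis: $\W\lambda \raPP \W\mu$ means there are $\lambda' \in \W\lambda$, $\mu' \in \W\mu$, and $\alpha \in \Phi^+$ with $\<\lambda',\alpha^\vee\>=0$ and $\mu' = \lambda' + \alpha$. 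Write $\lambda' = w\lambda$ for some $w \in W$.

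Next I would transport this firing move back to $\lambda$ by applying $w^{-1}$. Set $\beta := w^{-1}\alpha \in \Phi$ (the Weyl group preserves $\Phi$). Using that $W$ preserves the inner product and is compatible with the coroot map, i.e.\ $(w^{-1}\alpha)^\vee = w^{-1}(\alpha^\vee)$ since $w^{-1}$ is an isometry, I get
\[
\<\lambda,\beta^\vee\> = \<\lambda, w^{-1}(\alpha^\vee)\> = \<w\lambda,\alpha^\vee\> = \<\lambda',\alpha^\vee\> = 0.
\]
Moreover $w^{-1}\mu' = w^{-1}\lambda' + w^{-1}\alpha = \lambda + \beta$, and $w^{-1}\mu' \in \W\mu$. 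So $\lambda+\beta$ is a representative of $\W\mu$ obtained from $\lambda$ by adding a root $\beta$ orthogonal to $\lambda$; the only gap is that $\beta$ need not be \emph{positive}, and central-firing permits adding only positive roots.

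The key device for closing this gap---and the one genuinely nontrivial point---is the reflection $s_\beta$. Since $\<\lambda,\beta^\vee\>=0$, we have $s_\beta(\lambda) = \lambda - \<\lambda,\beta^\vee\>\beta = \lambda$, so $\lambda$ is fixed by $s_\beta$; consequently
\[
s_\beta(\lambda+\beta) = s_\beta(\lambda) + s_\beta(\beta) = \lambda - \beta,
\]
showing that $\lambda+\beta$ and $\lambda-\beta$ lie in the same $W$-orbit. Now let $\gamma$ be whichever of $\beta,-\beta$ is positive; since $\<\lambda,\gamma^\vee\>=\pm\<\lambda,\beta^\vee\>=0$, the move $\lambda \raPP \lambda+\gamma$ is legal. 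If $\gamma=\beta$ then $\lambda+\gamma = \lambda+\beta = w^{-1}\mu' \in \W\mu$; if $\gamma=-\beta$ then $\lambda+\gamma = \lambda-\beta = s_\beta(\lambda+\beta) = s_\beta(w^{-1}\mu') \in \W\mu$. In either case $\mu'' := \lambda+\gamma$ lies in $\W\mu$ and satisfies $\lambda \raPP \mu''$, completing the proof. The main obstacle is thus purely the sign of the transported root $\beta$, which the orthogonality-driven reflection trick resolves; everything else is bookkeeping with the isometric $W$-action.
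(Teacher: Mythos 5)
Your proof is correct and follows essentially the same route as the paper: transport the firing root back by $w^{-1}$ using the isometric $W$-action, then handle the case of a negative transported root via the reflection $s_{w^{-1}(\alpha)}$, which fixes $\lambda$ by orthogonality and sends $\lambda+w^{-1}(\alpha)$ to $\lambda-w^{-1}(\alpha)\in\W\mu$ with $-w^{-1}(\alpha)\in\Phi^+$. The only cosmetic difference is that you spell out the trivial ``if'' direction and the identity $(w^{-1}\alpha)^\vee=w^{-1}(\alpha^\vee)$, which the paper leaves implicit.
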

\begin{proof}
Let $w\lambda \in\W\lambda$ be such that $w\lambda\raPP w\lambda+\alpha\in\W\mu$ for some $\alpha\in\Phi^+$ which satisfies~$\<w\lambda,\alpha^\vee\>=0$. Since $w$ is an orthogonal transformation, $\<\lambda,w^{-1}(\alpha)^\vee\>=0$ as well. If $w^{-1}(\alpha)\in\Phi^+$, then we are done since we found a firing move $\lambda\raPP \lambda+w^{-1}(\alpha)$ with $\lambda+w^{-1}(\alpha)\in\W\mu$. If $w^{-1}(\alpha) \in \Phi^{-}$, let $\mu':=s_{w^{-1}(\alpha)}(\lambda+w^{-1}(\alpha))\in\W\mu$. Since we have~$\<\lambda,w^{-1}(\alpha)^\vee\>=0$, it follows that $\mu'=\lambda-w^{-1}(\alpha)$ and now $-w^{-1}(\alpha)$ is a positive root, so we are done. 
\end{proof}

\begin{cor}
For $\lambda, \mu \in P$, we have $\W\lambda \raPPAst \W\mu$ if and only if there is $\mu' \in \W\mu$ with $\lambda  \raPPAst \mu'$.
\end{cor}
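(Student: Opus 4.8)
The plan is to bootstrap from the single-step Proposition~\ref{prop:rapp_lift} to its many-step closure by an induction on the length of an orbit chain, treating the two implications separately since they are quite asymmetric in difficulty. The ``if'' direction is immediate and I would dispatch it first: given $\mu' \in \W\mu$ with $\lambda \raPPAst \mu'$, I write out a witnessing chain $\lambda = \nu_0 \raPP \nu_1 \raPP \cdots \raPP \nu_k = \mu'$ in $P$ and simply push it through the quotient map. Each step $\nu_i \raPP \nu_{i+1}$ descends to $\W{\nu_i} \raPP \W{\nu_{i+1}}$ by the very definition of the descended relation, yielding $\W\lambda = \W{\nu_0} \raPP \cdots \raPP \W{\nu_k} = \W\mu$, so $\W\lambda \raPPAst \W\mu$.

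For the ``only if'' direction, I would unwind the definition of $\raPPAst$ on $P/W$ to obtain a chain
\[\W\lambda = \W{x_0} \raPP \W{x_1} \raPP \cdots \raPP \W{x_t} = \W\mu,\]
and then lift it step-by-step to a chain in $P$ that begins at the prescribed representative $\lambda$. I would induct on $t$, proving that there is $\mu' \in \W{x_t}$ with $\lambda \raPPAst \mu'$. The base case $t = 0$ is handled by reflexivity: since $\lambda \in \W{x_0} = \W{x_t}$, I take $\mu' = \lambda$. For the inductive step, the inductive hypothesis applied to the truncated chain $\W{x_0} \raPP \cdots \raPP \W{x_{t-1}}$ produces a representative $\nu \in \W{x_{t-1}}$ with $\lambda \raPPAst \nu$; then, since $\W\nu = \W{x_{t-1}} \raPP \W{x_t}$, I invoke Proposition~\ref{prop:rapp_lift} to get $\mu' \in \W{x_t}$ with $\nu \raPP \mu'$, and concatenate to obtain $\lambda \raPPAst \nu \raPP \mu'$.

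The only subtlety worth flagging — and what I would treat as the crux rather than any calculation — is that Proposition~\ref{prop:rapp_lift} lifts an orbit-step while keeping the \emph{source} representative fixed: from $\W\nu \raPP \W{x_t}$ it produces a move out of the specific $\nu$ that the chain has already reached, not out of some unspecified member of $\W{x_{t-1}}$. This is precisely what keeps the lifted chain connected in $P$ as it is extended one step at a time. If the single-step lemma only guaranteed a move out of \emph{some} representative of the source orbit, the induction would fail, because the new move might begin at a point of $\W{x_{t-1}}$ bearing no relation to the endpoint $\nu$ of the chain built so far. So there is no genuine computational obstacle here; the entire content of the argument lies in applying the single-step lemma in this ``fixed-source'' form, which its statement and proof indeed provide.
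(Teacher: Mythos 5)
Your proof is correct and is exactly the argument the paper leaves implicit: the corollary is stated without proof precisely because it follows from Proposition~\ref{prop:rapp_lift} by the evident induction on the length of the orbit chain, which is what you carry out. Your observation that the ``fixed-source'' form of the single-step lift is what makes the induction go through is also the right point to flag.
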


Similarly to the definition of $\raPP$ in Section~\ref{sec:main_def}, let us define another binary relation~$\raPoint$ on $P$ (\emph{``central-firing of all the roots''}) by $\l\raPoint \l+\alpha$ whenever $\lambda \in P$ and $\alpha\in\Phi$ is such that $\<\l,\alpha^\vee\>=0$. Surprisingly, central-firing of the positive roots and central-firing of all the roots determine the same relation on $P/W$:

\begin{prop}
For $\lambda, \mu \in P$, we have $\W\lambda \raPP \W\mu$ if and only if $\W\lambda \raPoint \W\mu$.
\end{prop}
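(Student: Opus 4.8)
The plan is to prove the equivalence $\W\lambda \raPP \W\mu \iff \W\lambda \raPoint \W\mu$ by showing that any single central-firing move using an arbitrary root (not necessarily positive) can be matched, up to the $W$-action, by a central-firing move using a positive root. Since $\raPP$ fires only positive roots while $\raPoint$ fires all roots, the forward direction ($\raPP$ implies $\raPoint$) is immediate from $\Phi^+\subseteq\Phi$. The substance is the reverse direction.

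For the reverse direction, suppose $\W\lambda\raPoint\W\mu$. By definition this means there exist $\lambda'\in\W\lambda$ and a root $\alpha\in\Phi$ with $\langle\lambda',\alpha^\vee\rangle=0$ such that $\lambda'+\alpha\in\W\mu$. If $\alpha\in\Phi^+$, then $\lambda'\raPP\lambda'+\alpha$ already witnesses $\W\lambda\raPP\W\mu$ and we are done. The only case to handle is $\alpha\in\Phi^-$, so write $\beta:=-\alpha\in\Phi^+$. The key observation is that because $\langle\lambda',\alpha^\vee\rangle=0$, the reflection $s_\alpha=s_\beta$ fixes $\lambda'$, and I would compute $s_\beta(\lambda'+\alpha)=s_\beta(\lambda')+s_\beta(\alpha)=\lambda'-\alpha=\lambda'+\beta$. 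Thus $\lambda'+\beta$ lies in the same $W$-orbit as $\lambda'+\alpha$, namely $\W\mu$, and since $\langle\lambda',\beta^\vee\rangle=-\langle\lambda',\alpha^\vee\rangle=0$ with $\beta\in\Phi^+$, the move $\lambda'\raPP\lambda'+\beta$ exhibits $\W\lambda\raPP\W\mu$. This is essentially the same reflection trick already used in the proof of Proposition~\ref{prop:rapp_lift}, reused here to flip a negative root into a positive one while staying in the correct orbit.

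The argument is short and the main (mild) obstacle is purely bookkeeping: one must be careful that the relations $\raPP$ and $\raPoint$ on $P/W$ are \emph{single-step} relations, so it suffices to match one firing move at a time rather than an entire firing sequence. Concretely, the reflection $s_\beta$ sends the target $\lambda'+\alpha$ to $\lambda'+\beta$, and I should record explicitly that $s_\beta$ preserves $W$-orbits, so $\W(\lambda'+\alpha)=\W(\lambda'+\beta)=\W\mu$. No termination or confluence input is needed here; the statement is about the two relations coinciding, and it reduces entirely to the elementary identity $s_\beta(v+\alpha)=v+\beta$ whenever $s_\beta(v)=v$ and $\beta=-\alpha$.
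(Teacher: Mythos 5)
Your proof is correct and takes essentially the same approach as the paper: the paper also reduces to the direction $\raPoint\Rightarrow\raPP$ and, when the firing root $\alpha$ is negative, applies the reflection $s_\alpha$ (which fixes the weight by orthogonality) to replace the target $\lambda'+\alpha$ with $\lambda'-\alpha=\lambda'+\beta$ in the same $W$-orbit, yielding a positive-root firing move. Your write-up just makes explicit the computation $s_\beta(\lambda'+\alpha)=\lambda'+\beta$ that the paper states tersely as $\mu':=\lambda-\alpha=s_\alpha(\mu)\in\W\mu$.
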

\begin{proof}
It suffices to show that if $\W\lambda \raPoint\W\mu$ then $\W\lambda \raPP\W\mu$. Indeed, suppose that we have~$\lambda \raPoint\mu$ for some $\lambda,\mu\in P$. Then $\mu=\lambda+\alpha$ for some $\alpha\in\Phi$. If $\alpha\in\Phi^+$ then clearly~$\W\lambda\raPP\W\mu$ and we are done. If $\alpha\in\Phi^-$, then set $\mu':=\lambda-\alpha=s_\alpha(\mu)\in\W\mu$. We then have $\lambda\raPP\mu'$.
\end{proof}

\begin{prop}
The relation~$\raPP$ on $P/W$ is terminating.
\end{prop}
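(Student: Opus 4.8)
The plan is to deduce termination of $\raPP$ on $P/W$ from the termination already established on $P$ (Proposition~\ref{prop:centraltermination}), using the lifting property recorded in Proposition~\ref{prop:rapp_lift}. The key observation is that any infinite sequence of unlabeled firing moves in $P/W$ can be lifted, step by step, to an infinite sequence of (labeled) firing moves in $P$, and the latter is impossible. So the whole argument is a short reduction rather than the construction of a new monovariant.

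Concretely, I would argue by contradiction. Suppose there were an infinite sequence of orbits
\[\W{x_0} \raPP \W{x_1} \raPP \W{x_2} \raPP \cdots.\]
I would then build, by induction on $i$, representatives $x_0, x_1', x_2', \dots \in P$ with $x_i' \in \W{x_i}$ and $x_i' \raPP x_{i+1}'$ for every $i$. For the base case, pick any $x_0 \in \W{x_0}$. For the inductive step, suppose $x_i' \in \W{x_i}$ has been chosen; since $\W{x_i'} = \W{x_i}$ and $\W{x_i} \raPP \W{x_{i+1}}$, applying Proposition~\ref{prop:rapp_lift} with $\lambda = x_i'$ yields a representative $x_{i+1}' \in \W{x_{i+1}}$ with $x_i' \raPP x_{i+1}'$. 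This produces an infinite chain $x_0 \raPP x_1' \raPP x_2' \raPP \cdots$ in $P$, directly contradicting Proposition~\ref{prop:centraltermination}. Hence no infinite orbit sequence exists, i.e.\ $\raPP$ on $P/W$ is terminating.

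The one point worth flagging is why I would not try a direct potential-function argument on $P/W$. The monovariant $\langle 2\rho - \lambda, 2\rho - \lambda\rangle$ from Proposition~\ref{prop:centraltermination} is not $W$-invariant, so it does not descend to a function on $P/W$; and the obvious $W$-invariant candidate $\langle\lambda,\lambda\rangle$ moves the wrong way, since $\langle\lambda+\alpha,\lambda+\alpha\rangle = \langle\lambda,\lambda\rangle + \langle\alpha,\alpha\rangle > \langle\lambda,\lambda\rangle$ whenever $\langle\lambda,\alpha\rangle = 0$. This mismatch is exactly what makes the lifting route, rather than a descended monovariant, the natural one. Given Proposition~\ref{prop:rapp_lift}, however, the lifting is essentially immediate, so I do not anticipate any genuine obstacle here; the only care needed is the bookkeeping in the inductive choice of representatives.
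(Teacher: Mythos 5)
Your proof is correct and follows essentially the same route as the paper: the paper also lifts a hypothetical infinite chain of orbits step by step via Proposition~\ref{prop:rapp_lift} to an infinite chain in $P$, contradicting Proposition~\ref{prop:centraltermination}. Your additional remark about why the monovariant $\<2\rho-\lambda,2\rho-\lambda\>$ does not descend to $P/W$ is accurate but not needed for the argument.
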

\begin{proof}
Suppose that there exists an infinite path $\W\lambda_1\raPP\W\lambda_2\raPP\dots$. Then by Proposition~\ref{prop:rapp_lift}, there exists $\mu_2\in\W\lambda_2$ such that $\lambda_1\raPP\mu_2$. By Proposition~\ref{prop:rapp_lift} again, there exists $\mu_3\in\W\lambda_3$ such that $\mu_2\raPP\mu_3$, and so on. We obtain an infinite sequence~$\lambda_1\raPP\mu_2\raPP\mu_3\raPP\dots$ which contradicts Proposition~\ref{prop:centraltermination}.
\end{proof}

Now we proceed to prove that unlabeled central-firing is confluent (from every initial orbit $\W\l$). In order to do so, we will use \emph{Newman's lemma}, a.k.a., the \emph{diamond lemma}~\cite{newman1942theories}, which we now explain.

\begin{definition}
We say that a relation $\ra$ on a set $X$ is \emph{locally confluent} if for any~$x,y,y'\in X$ with $x\ra y$ and $x\ra y'$, there exists $z\in X$ such that $y\raAst z$ and $y'\raAst z$.
\end{definition}

\begin{lemma}[\cite{newman1942theories}]
  Let $\ra$ be a terminating relation on $X$. Then $\ra$ is confluent if and only if $\ra$ is locally confluent.
\end{lemma}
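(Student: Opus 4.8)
The plan is to handle the two implications separately, with essentially all of the content lying in the reverse direction. The forward implication is immediate and does not use termination: local confluence is exactly the special case of confluence in which the two diverging reductions $x\raAst y$ and $x\raAst y'$ each consist of a single step, so if $\ra$ is confluent then it is \emph{a fortiori} locally confluent.

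For the substantial direction---that a terminating, locally confluent relation is confluent---I would argue by \emph{well-founded} (Noetherian) \emph{induction} on $X$, using that termination---the absence of infinite sequences $x_0\ra x_1\ra\cdots$---is exactly the well-foundedness needed to justify Noetherian induction. Concretely, I prove that $\ra$ is confluent from $x$ for every $x\in X$, assuming as the induction hypothesis that $\ra$ is confluent from every $x'$ with $x\ra x'$. Fix $x$ and suppose $x\raAst y$ and $x\raAst y'$; the goal is a common $z$ with $y\raAst z$ and $y'\raAst z$. If either reduction is empty---say $x=y$---then $z:=y'$ works at once, so I may assume both begin with a genuine step: $x\ra y_1\raAst y$ and $x\ra y_1'\raAst y'$.

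From here I would chase the standard diamond in three stages. First, \emph{local} confluence applied to the one-step fork $x\ra y_1$, $x\ra y_1'$ yields $u$ with $y_1\raAst u$ and $y_1'\raAst u$. Second, the induction hypothesis at $y_1$ (valid since $x\ra y_1$), applied to $y_1\raAst y$ and $y_1\raAst u$, yields $v$ with $y\raAst v$ and $u\raAst v$. Third, the induction hypothesis at $y_1'$, applied to $y_1'\raAst y'$ and $y_1'\raAst u\raAst v$, yields $z$ with $y'\raAst z$ and $v\raAst z$. Concatenating the reductions gives $y\raAst v\raAst z$ and $y'\raAst z$, so $z$ is the desired common reduct.

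The single delicate point---and the reason termination cannot be dropped---is that local confluence only closes \emph{single-step} forks while confluence must close forks of arbitrary length; the bridge is precisely the two applications of the induction hypothesis at the intermediate points $y_1,y_1'$, which lie strictly below $x$ in the well-founded order. I expect the only real care to lie in setting up the induction cleanly: handling the empty-reduction base case and ensuring the hypothesis is invoked only at points reachable in at least one step, so that the recursion is genuinely well-founded. Once that scaffolding is in place the diagram chase is routine.
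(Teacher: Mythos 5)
Your proof is correct and complete: the forward direction is indeed trivial, and your reverse direction is the standard well-founded (Noetherian) induction argument, usually attributed to Huet, with the three-stage diamond chase (one application of local confluence at the one-step fork, then two applications of the induction hypothesis at the intermediate points $y_1$ and $y_1'$) carried out exactly as it should be. Note, however, that the paper itself does not prove this lemma at all --- it is quoted directly from Newman's 1942 paper with a citation --- so there is no internal proof to compare against; your argument simply supplies, correctly, the standard modern proof of the cited result. The only remark worth adding is that your induction principle (hypothesis at immediate successors) is justified from termination by observing that a counterexample would propagate along one-step successors and produce an infinite $\ra$-sequence; you gesture at this when you identify termination with well-foundedness, and making that one line explicit would make the scaffolding fully airtight.
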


\begin{lemma}
The relation~$\raPP$ on $P/W$ is locally confluent.
\end{lemma}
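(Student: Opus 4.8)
The plan is to establish local confluence directly by producing, for any competing pair of moves, a common $\raPP$-descendant. First I would fix a single representative $\lambda$ of the source orbit and invoke Proposition~\ref{prop:rapp_lift} to lift \emph{both} outgoing moves to moves out of $\lambda$ itself. This yields positive roots $\alpha,\beta\in\Phi^+$ with $\<\lambda,\alpha^\vee\>=\<\lambda,\beta^\vee\>=0$, together with $\lambda+\alpha\in\W y$ and $\lambda+\beta\in\W y'$, so that $\W y=\W{(\lambda+\alpha)}$ and $\W y'=\W{(\lambda+\beta)}$. The task thus becomes: find $z$ with $\W{(\lambda+\alpha)}\raPPAst\W z$ and $\W{(\lambda+\beta)}\raPPAst\W z$. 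If $\alpha=\beta$ there is nothing to do.

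Next I would dispose of the \emph{orthogonal case} $\<\alpha,\beta^\vee\>=0$ by a one-step diamond on each side: since $\<\lambda+\alpha,\beta^\vee\>=\<\alpha,\beta^\vee\>=0$ and symmetrically $\<\lambda+\beta,\alpha^\vee\>=0$, both $\lambda+\alpha$ and $\lambda+\beta$ fire to $\lambda+\alpha+\beta$, so $z=\lambda+\alpha+\beta$ works. The substance of the lemma is therefore the remaining case $\alpha\neq\beta$ with $\<\alpha,\beta^\vee\>\neq 0$.

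In this case $\alpha$ and $\beta$ generate a rank-$2$ sub-root system $\Psi:=\Phi\cap\Span\{\alpha,\beta\}$, and from $\lambda\perp\alpha$, $\lambda\perp\beta$ we get $\lambda\perp\Span\{\alpha,\beta\}$; hence the group $W_\Psi$ generated by the $s_\gamma$ with $\gamma\in\Psi$ fixes $\lambda$ pointwise. The key observation is that for any $\gamma$ in the root lattice of $\Psi$ and any $\delta\in\Psi$ one has $\<\lambda+\gamma,\delta^\vee\>=\<\gamma,\delta^\vee\>$, because $\<\lambda,\delta^\vee\>=0$. Consequently the central-firing dynamics obtained by firing only roots of $\Psi$, after translating by $-\lambda$, is \emph{identical} to unlabeled central-firing in the rank-$2$ system $\Psi$ started from $0$; and since $W_\Psi\subseteq W$ fixes $\lambda$, any two points $\lambda+\gamma_1,\lambda+\gamma_2$ that are $W_\Psi$-equivalent are a fortiori in one $W$-orbit. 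So it suffices to prove that unlabeled central-firing (modulo $W_\Psi$) is confluent from $0$ in each rank-$2$ root system, noting that the reducible type $A_1\times A_1$ cannot arise here since $\<\alpha,\beta^\vee\>\neq 0$.

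Finally I would verify confluence from $0$ for the three irreducible rank-$2$ systems $A_2$, $B_2=C_2$, and $G_2$ by a finite computation: termination is guaranteed by Proposition~\ref{prop:centraltermination}, so by Newman's lemma one only checks local confluence at each orbit reachable from $0$. For example, in $A_2$ every positive root is $\raPP$-stable and all three lie in a single $W_\Psi$-orbit, so $\W{(\lambda+\alpha)}=\W{(\lambda+\beta)}$ already; in $B_2$ one checks that every firing sequence from $0$ terminates at the orbit of $2e_1+e_2$. This finite case analysis is the main obstacle --- not conceptually deep, but it must be carried out with care, since (as the $B_2$ trace shows, where the short-root branch passes through $e_1+e_2\to 2e_1\to 2e_1+e_2$) reconciling the two initial moves can require several further firing steps rather than a single diamond, and the unequal root lengths in $B_2$ and $G_2$ mean the two branches need not meet after one step.
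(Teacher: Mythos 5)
Your proposal is correct and follows essentially the same route as the paper's proof: lift both competing moves to a single representative $\lambda$ via Proposition~\ref{prop:rapp_lift}, reduce to the rank-$2$ sub-root system $\Phi\cap\Span\{\alpha,\beta\}$ (whose Weyl group fixes $\lambda$, so its orbits sit inside $W$-orbits), and finish by a finite check of confluence from $0$ in the rank-$2$ cases. The only differences are cosmetic --- you dispose of the orthogonal ($A_1\oplus A_1$) case by a one-step diamond before the reduction, and you verify $A_2$, $B_2$, $G_2$ by tracing orbits to a common descendant rather than exhibiting the paper's explicit one-step witnesses --- and your stated $B_2$ trace and the transitivity argument in $A_2$ are accurate.
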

\begin{proof}
Let $\l,\m,\m'\in P$ be such that $\W\l\raPP \W\m$ and $\W\l\raPP\W\m'$. By Proposition~\ref{prop:rapp_lift}, we may choose $\m$ and $\m'$ so that $\l\raPP \m$ and $\l\raPP\m'$. Let $\alpha:=\m-\l$ and $\beta:=\m'-\l$. Thus~$\alpha$ and $\beta$ are positive roots that are both orthogonal to $\l$. We may assume that $\alpha\neq\beta$. Consider now the affine $2$-dimensional plane $H$ spanned by $\alpha$ and $\beta$ that passes through~$\l$. If we can show that there exists $\nu\in H$ such that $\W\m\raPP\W\nu$ and~$\W\m'\raPP\W\nu$ then we are done with the proof. Therefore it is enough to show that for the sub-root system~$\Phi'$ of $\Phi$ spanned by $\alpha$ and $\beta$, the relation $\raPPprime$ on $P'/W'$ is confluent, where $P'$ and $W'$ denote the weight lattice and the Weyl group of $\Phi'$. 

Thus we can now assume that $\Phi=\Phi'$ is a rank $2$ root system. Note, in rank~$2$, that to establish confluence we only need to check confluence from $\W0$ (because there is at most one firing move from any other orbit). This is easily verified by hand in each of the four possible cases: $A_1\oplus A_1, A_2, B_2, G_2$. We need to check that for any $\beta_1,\beta_2\in\Phi^+$, there exists $\lambda \in P$ such that $\W\beta_1\raPPAst\W\lambda$ and $\W\beta_2\raPPAst\W\lambda$. For $A_1\oplus A_1$ this is trivial, so we can assume $\Phi$ is irreducible. Then, if $\beta_1$ and $\beta_2$ have the same length we get $\W\beta_1=\W\beta_2$ and so there is nothing to check. Thus we can assume that $\Phi$ is not simply laced and $\beta_1$ is short and $\beta_2$ is long. Since the answer only depends on $\W\beta_1$ and $\W\beta_2$, we are free to choose any short $\beta_1$ and long $\beta_2$. So for $\Phi=B_2$ we can take $\beta_1=\alpha_2$ and $\beta_2=\alpha_1+2\alpha_2$ (with the numbering of the simple roots as in Figure~\ref{fig:dynkinclassification}) and $\lambda=\beta_2$, since then $\<\beta_1,\alpha_1+\alpha_2\> = 0$ and $\alpha_1+\alpha_2\in \Phi^{+}$. And for $\Phi=G_2$ we can take $\beta_1=\alpha_1$ and $\beta_2=3\alpha_1+2\alpha_2$ and $\lambda=\beta_1+\beta_2$, since then $\<\beta_1,\beta_2\>=0$.
\end{proof}

\begin{cor}\label{cor:unlabeled_confluent}
The relation~$\raPP$ on $P/W$ is confluent (and terminating).
\end{cor}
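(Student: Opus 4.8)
The plan is to invoke Newman's lemma directly, since all of the substantive work has already been carried out in the two immediately preceding results. The proposition establishing that $\raPP$ on $P/W$ is terminating, together with the lemma establishing that $\raPP$ on $P/W$ is locally confluent, supply exactly the two hypotheses of Newman's lemma. That lemma asserts that a terminating relation on a set is confluent if and only if it is locally confluent. Applying it with $X = P/W$ and the relation $\raPP$ therefore yields confluence at once, and termination is simply restated from the earlier proposition. So the body of the proof is essentially a single line: ``By the preceding proposition $\raPP$ on $P/W$ is terminating, and by the preceding lemma it is locally confluent, so by Newman's lemma it is confluent.''

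The one point worth checking is that the hypotheses of Newman's lemma genuinely match up — that both the termination and the local confluence assertions concern the \emph{same} relation on the \emph{same} set, namely $\raPP$ on the quotient $P/W$, rather than the lifted relation $\raPP$ on $P$. Both preceding statements are phrased for $P/W$, so there is no gap, and one does not need to re-examine whether confluence on $P$ descends to confluence on the quotient.

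I do not anticipate any real obstacle here, as the corollary is a purely formal consequence of results already in hand. If there were any subtlety, it would lie in the passage between the relation on $P$ and its descent to $P/W$, specifically in whether forming the reflexive-transitive closure commutes with modding out by $W$ (recall the ambiguity flagged before Proposition~\ref{prop:rapp_lift}). But that reconciliation has already been achieved by Proposition~\ref{prop:rapp_lift} and its corollary, which show the two interpretations of $\W\lambda \raPPAst \W\mu$ coincide. With that clarification secured, the diamond lemma closes the argument and no further computation is required.
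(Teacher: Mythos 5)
Your proof is correct and is exactly the paper's argument: the corollary follows immediately by applying Newman's lemma to the preceding proposition (termination of $\raPP$ on $P/W$) and the preceding lemma (local confluence of $\raPP$ on $P/W$), which is why the paper states it without a separate proof body. Your side remark about the closure-versus-quotient ambiguity being settled by Proposition~\ref{prop:rapp_lift} is also consistent with how the paper sets things up.
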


\begin{remark}
Unlabeled central-firing is a generalization of classical chip-firing to other root systems $\Phi$. Another such generalization, studied in detail by Benkart, Klivans, and Reiner~\cite{benkart2016chip}, is $M$-matrix chip-firing with respect to the Cartan matrix~$\mathbf C$ of $\Phi$. Such Cartan matrix chip-firing is also confluent for all root systems, starting with any initial configuration. We note that these generalizations are somewhat ``orthogonal'' to each other: for example, in Type~$A_{N-1}$, unlabeled central-firing corresponds to chip-firing of~$N$ chips on the infinite path graph; whereas the Cartan matrix chip-firing corresponds to chip-firing of any number of chips on the cycle graph with~$N$ vertices. For a more direct connection between the results of~\cite{benkart2016chip} and our work, see~\cite[Remark~10.3]{galashin2017rootfiring1}.
\end{remark}

Corollary~\ref{cor:unlabeled_confluent} says that to decide if central-firing is confluent from $\lambda$, i.e., to answer Question~\ref{question:centralconf}, we only need to verify that there is a unique chamber which every central-firing sequence from $\lambda$ terminates in. However, in practice this does not necessarily help that much to resolve Question~\ref{question:centralconf}; e.g., the main difficulty in the analysis of labeled chip-firing in~\cite{hopkins2017sorting} was precisely to show that the labeled chip-firing process \emph{sorts} the chips (from the appropriate initial configuration).

In many cases we can say exactly what the stabilization of $\W\lambda$ is. 

\begin{prop}\label{prop:unlabeled_stabilization}
Suppose that $\lambda \in \Pi^{Q}(\rho+\omega)$ for some $\omega \in \Omega^0_m$. Then $\W(\rho+\omega)$ is the $\raPP$-stabilization of $\W\lambda$.
\end{prop}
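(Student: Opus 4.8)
The plan is to leverage that $\raPP$ on $P/W$ is confluent and terminating (Corollary~\ref{cor:unlabeled_confluent}), so $\W\lambda$ has a well-defined stabilization, and then to identify this stabilization with $\W(\rho+\omega)$. First I would record that $\W(\rho+\omega)$ is $\raPP$-stable: since $\rho$ is strictly dominant and $\omega$ is dominant, $\rho+\omega$ is strictly dominant, so $\<\rho+\omega,\alpha^\vee\>>0$ for every $\alpha\in\Phi^+$ and no move applies to $\rho+\omega$. Granting the key point below (that every firing sequence issuing from a weight of $\Pi^{Q}(\rho+\omega)$ stays inside $\Pi^{Q}(\rho+\omega)$), any such sequence terminates at a stable orbit lying in $\Pi^{Q}(\rho+\omega)$, so it remains only to see that $\W(\rho+\omega)$ is the unique stable orbit there.

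For that uniqueness step, a stable orbit has a strictly dominant representative $\nu$, and $\nu\in\Pi^{Q}(\rho+\omega)$ forces $\nu\preceq\rho+\omega$ and $\nu\equiv\rho+\omega\pmod Q$ (recall a dominant weight lies in $\Pi(\rho+\omega)$ iff it is $\preceq\rho+\omega$ in the dominance order). Writing $\nu=\rho+\eta$, strict dominance of $\nu$ makes $\eta$ dominant, while $\nu\preceq\rho+\omega$ gives $\eta\preceq\omega$ and $\nu\equiv\rho+\omega$ gives $\eta\equiv\omega\pmod Q$; thus $\eta$ is a dominant element of $\Pi^{Q}(\omega)$. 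If $\omega=0$ then $\eta$ dominant with $\eta\preceq 0$ forces $\eta=0$ (pair $-\eta\in Q^{\geq 0}$ against the dominant $\eta$ to get $\<\eta,\eta\>\le 0$), and if $\omega$ is minuscule then $\Pi^{Q}(\omega)=\W\omega$ has $\omega$ as its only dominant element; either way $\eta=\omega$ and $\nu=\rho+\omega$. This is exactly where the hypothesis $\omega\in\Omega^0_m$ enters.

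The main obstacle is the claim that central-firing preserves $\Pi^{Q}(\rho+\omega)$: if $\mu\in\Pi^{Q}(\rho+\omega)$ and $\mu\raPP\mu+\alpha$, then $\mu+\alpha\in\Pi^{Q}(\rho+\omega)$. The class mod $Q$ is obviously preserved, so the content is $\mu+\alpha\in\Pi(\rho+\omega)$. Using Proposition~\ref{prop:rapp_lift} I would reduce to $\mu$ dominant; then $\<\mu,\alpha^\vee\>=0$ forces $\alpha$ to be a positive root of the parabolic $\Phi_{I^0_\mu}$, supported on a single connected component $J$ of $I^0_\mu$. Since $\mu$ is fixed by $W_J$ and (in the simply-laced case) $W_J$ acts transitively on $\Phi_J$, the orbit $W(\mu+\alpha)$ has dominant representative $\mu+\theta_J$, where $\theta_J=\sum_{i\in J}a_i\alpha_i$ is the highest root of $\Phi_J$; and $\mu+\theta_J$ is genuinely dominant because $\<\theta_J,\alpha_i^\vee\>\ge -1$ for $i\notin J$ while $\<\mu,\alpha_i^\vee\>\ge 1$ there. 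So the claim becomes $\mu+\theta_J\preceq\rho+\omega$.

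To finish I would write $\rho+\omega-\mu=\sum_k m_k\alpha_k$ with all $m_k\ge 0$ and show $m_i\ge a_i$ for $i\in J$, which gives $\rho+\omega-(\mu+\theta_J)\in Q^{\geq 0}$. Here regularity of $\rho+\omega$ is essential: for $i\in J\subseteq I^0_\mu$ we have $2m_i-\sum_{k\sim i}m_k=\<\rho+\omega-\mu,\alpha_i^\vee\>=\<\rho+\omega,\alpha_i^\vee\>\ge 1$, and discarding the nonnegative contributions of neighbors outside $J$ yields $(C^{(J)}m|_J)_i\ge 1$ for the Cartan matrix $C^{(J)}$ of $\Phi_J$. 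Because $\theta_J$ is a root distinct from each $\alpha_i$ when $\operatorname{rk}\Phi_J\ge 2$, we have $\<\theta_J,\alpha_i^\vee\>\in\{0,1\}$, so $m|_J-a$ satisfies $C^{(J)}(m|_J-a)\ge 0$; that is, $\sum_{i\in J}(m_i-a_i)\alpha_i$ is dominant, and since the inverse Cartan matrix has nonnegative entries a dominant element of the root lattice has nonnegative simple-root coordinates, whence $m_i\ge a_i$. The rank-one components are handled directly from $2m_i\ge 1$. I expect this Cartan-positivity argument to be the crux; the non-simply-laced cases should run the same way after replacing $\theta_J$ by the appropriate highest (short or long) element of $W_J\alpha$ and checking the analogous bound on $\<\cdot,\alpha_i^\vee\>$. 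Combining stability of $\W(\rho+\omega)$, its uniqueness among stable orbits in $\Pi^{Q}(\rho+\omega)$, and invariance of $\Pi^{Q}(\rho+\omega)$ with confluence and termination then identifies $\W(\rho+\omega)$ as the stabilization of $\W\lambda$.
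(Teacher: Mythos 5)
Your overall skeleton is exactly the paper's: $\W(\rho+\omega)$ is $\raPP$-stable because $\rho+\omega$ is strictly dominant; it is the \emph{unique} stable orbit meeting $\Pi^{Q}(\rho+\omega)$ (your argument here, via Lemma~\ref{lemma:permcontainment} and the definition of minuscule, is the paper's argument almost verbatim); and central-firing never leaves $\Pi^{Q}(\rho+\omega)$, so termination (Proposition~\ref{prop:centraltermination}) forces every firing sequence from $\W\lambda$ to end at $\W(\rho+\omega)$. (Your appeal to Corollary~\ref{cor:unlabeled_confluent} is harmless but superfluous: invariance plus termination plus uniqueness of the stable orbit already pin down the terminal orbit of every sequence.) The one real divergence is the invariance (``trap'') step: the paper simply quotes Lemma~\ref{lem:permtrap}, i.e.\ Lemma~8.2 of~\cite{galashin2017rootfiring1}, which holds for every root system, whereas you prove it from scratch. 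For simply-laced $\Phi$ your proof of that step is correct and self-contained: the reduction to dominant $\mu$ via Proposition~\ref{prop:rapp_lift}, the identification of the dominant representative $\mu+\theta_J$, and the Cartan-matrix argument giving $m_i\geq a_i$ on $J$ are all sound, and together they form a nice alternative to the citation.

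However, the proposition is stated for arbitrary $\Phi$, and your claim that the non-simply-laced cases ``run the same way'' is a genuine gap, not a routine verification. Your method needs the orbit $W(\mu+\alpha)$ to contain a \emph{dominant} weight of the form $\mu+\theta'$ with $\theta'\in W_J\alpha$; that is exactly Proposition~\ref{prop:simply_laced_dominant}, and Remark~\ref{rem:nonsimplylaced_central} of the paper points out that it fails outside the simply-laced case. Concretely, for $\Phi=B_2$ with $\alpha_1=e_1-e_2$ (long) and $\alpha_2=e_2$ (short), take $\mu=\omega_2=(\tfrac12,\tfrac12)$, which lies in $\Pi^{Q}(\rho)$ since $\rho-\mu=\alpha_1+\alpha_2$, and fire $\alpha=\alpha_1$. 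Here $J=\{1\}$ and $\theta'=\alpha_1$, but $\<\theta',\alpha_2^\vee\>=-2$, so your bound $\<\theta',\alpha_i^\vee\>\geq-1$ for $i\notin J$ fails and $\mu+\theta'=(\tfrac32,-\tfrac12)$ is not dominant. The true dominant representative of $W(\mu+\alpha_1)$ is $(\tfrac32,\tfrac12)=\mu+(\alpha_1+\alpha_2)$, which is not $\mu$ plus an element of $W_J\alpha_1$, nor even $\mu$ plus a root orthogonal to $\mu$. Two things then break: the required containment can no longer be phrased as an inequality $m_i\geq a_i$ supported on $J$ (the relevant difference involves $\alpha_2\notin J$), and even if you established that $\rho+\omega-(\mu+\theta')$ is a nonnegative combination of simple roots, Lemma~\ref{lemma:permcontainment} would not let you conclude $\mu+\theta'\in\Pi(\rho+\omega)$, because that lemma requires \emph{both} weights to be dominant. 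Repairing this requires controlling reflections outside $W_J$, which move $\mu$ itself--a genuinely different argument, and precisely the difficulty the paper sidesteps by citing Lemma~8.2 of~\cite{galashin2017rootfiring1}.
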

\begin{proof}
  It is clear that $\W(\rho+\omega)$ is $\raPP$-stable since $\rho+\omega$ is strictly dominant. In fact, we claim that $\rho+\omega$ is the only strictly dominant weight in $\Pi^{Q}(\rho+\omega)$. Indeed, suppose that~$\nu$ is strictly dominant and belongs to~$\Pi^{Q}(\rho+\omega)$. Since it is strictly dominant, we have~$\nu= \rho+\mu$ for some dominant weight $\mu$. Recall the following well-known fact whose proof is given in~\cite[\permcontainment]{galashin2017rootfiring1} (see also~\cite{stembridge1998partial}).
  \begin{lemma}\label{lemma:permcontainment}
    For two dominant weights $\m,\m'\in P$, we have $\m\in \Pi^Q(\m')$ if and only if~$\m'-\m$ is a linear combination of simple roots with nonnegative integer coefficients. 
  \end{lemma}
 Thus $(\rho+\omega)-\nu$ is an integer combination of simple roots with nonnegative coefficients. Therefore the same is true for $\omega-\mu$, and hence $\mu \in \Pi^{Q}(\omega)$ again by Lemma~\ref{lemma:permcontainment}. By definition, this forces~$\mu=\omega$ and thus~$\nu= \rho+\omega$.

So the vertices of $\Pi(\rho+\omega)$ are the only weights in $\Pi^{Q}(\rho+\omega)$ that are~$\raPP$-stable. Let us now recall another result that follows from~\cite[\permtrap]{galashin2017rootfiring1}.
 \begin{lemma}\label{lem:permtrap}
   If $\m\in\Pi^Q(\rho+\m'')$ for dominant weights $\m$ and $\m''$ then $\m'\in \Pi^Q(\rho+\m'')$ for any $\m'\in P$ such that $\m\raPP\m'$.
 \end{lemma}
By Lemma~\ref{lem:permtrap} together with Proposition~\ref{prop:centraltermination} we know that any central-firing sequence starting at a weight in $\Pi^{Q}(\rho+\omega)$ must terminate at a weight in $\Pi^{Q}(\rho+\omega)$. So such a firing sequence must terminate at a vertex of~$\Pi(\rho+\omega)$. Thus indeed we have~$\W\lambda \raPPAst \W(\rho+\omega)$.
\end{proof}

\section{Unlabeled central-firing on simply laced Dynkin diagrams}\label{sec:unlab-centr-firing}

For $\Phi$ of classical type, the moves from Section~\ref{sec:labeled-chip-firing-B-C-D} allow one to give a similar description of unlabeled central-firing in these types. For example, for Type A, forgetting the labels of the chips yields exactly the unlabeled central-firing process. In this section, we give a very different description of the same process. It turns out that when $\Phi$ is simply laced, unlabeled central-firing can be reformulated as a certain number game with simple rules on the Dynkin diagram $\DD$ of $\Phi$. The goal of this section is to describe these rules and generalize the \emph{abelian} property of classical chip-firing, which says that firing moves always ``commute.''

\begin{prop}\label{prop:simply_laced_dominant}
Suppose $\Phi$ is simply laced. Let $\lambda \in P$ be a dominant weight. Then if $\W\lambda \raPP \W\mu$, there is a dominant $\mu' \in \W\mu$ such that $\lambda \raPP \mu'$.
\end{prop}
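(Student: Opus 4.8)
The plan is to lift the orbit-level move to a genuine firing move and then to act by the stabilizer of $\lambda$ in order to restore dominance. By Proposition~\ref{prop:rapp_lift} we may pick $\mu''\in\W{\mu}$ with $\lambda\raPP\mu''$, so that $\mu''=\lambda+\alpha$ for some $\alpha\in\Phi^+$ with $\<\lambda,\alpha^\vee\>=0$. Set $I:=I_\lambda^0$. The first key step is to observe that $\alpha$ necessarily lies in the parabolic sub-root system $\Phi_I$: writing $\alpha=\sum_i n_i\alpha_i$ with all $n_i\geq 0$, the identity $\<\lambda,\alpha\>=\sum_i n_i\,\frac{\<\alpha_i,\alpha_i\>}{2}\,\<\lambda,\alpha_i^\vee\>$ displays $\<\lambda,\alpha\>$ as a sum of nonnegative terms, so $\<\lambda,\alpha\>=0$ (which is equivalent to $\<\lambda,\alpha^\vee\>=0$) forces $n_i=0$ for every $i\notin I$. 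Hence $\alpha\in\Phi_I^+$.

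Next I would use that the parabolic subgroup $W_I$ fixes $\lambda$ (each $s_{\alpha_i}$ with $i\in I$ fixes $\lambda$ because $\<\lambda,\alpha_i^\vee\>=0$) and stabilizes $\Phi_I$. Thus for every $w\in W_I$ we have $w\mu''=\lambda+w\alpha\in\W{\mu}$, and $w\alpha$ runs over the $W_I$-orbit of $\alpha$ inside $\Phi_I$. I would then take $\beta$ to be the unique dominant root of $\Phi_I$ in this orbit---concretely the highest root of the irreducible component of $\Phi_I$ on which $\alpha$ is supported---which is in particular a \emph{positive} root, and put $\mu':=\lambda+\beta$. By construction $\mu'\in\W{\mu}$; moreover $\beta\in\Phi_I$ is orthogonal to $\lambda$ and positive, so $\lambda\raPP\mu'$. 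It then only remains to check that $\mu'$ is dominant.

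For the dominance check I would split on whether a simple index lies in $I$. For $i\in I$ we have $\<\mu',\alpha_i^\vee\>=\<\beta,\alpha_i^\vee\>\geq 0$ exactly because $\beta$ was chosen to be $\Phi_I$-dominant. For $j\notin I$ we have $\<\mu',\alpha_j^\vee\>=\<\lambda,\alpha_j^\vee\>+\<\beta,\alpha_j^\vee\>$, where $\<\lambda,\alpha_j^\vee\>\geq 1$ because $\lambda$ is a dominant weight and $j\notin I=I_\lambda^0$. Here is where the simply-laced hypothesis is indispensable: since $\beta$ and $\alpha_j$ are roots of the same length and are not proportional (as $\beta$ is supported on $I\not\ni j$), the Cartan integer $\<\beta,\alpha_j^\vee\>$ lies in $\{-1,0,1\}$; and writing $\beta=\sum_{i\in I}n_i\alpha_i$ with $n_i\geq 0$ shows $\<\beta,\alpha_j^\vee\>=\sum_{i\in I}n_i\<\alpha_i,\alpha_j^\vee\>\leq 0$, since distinct simple roots pair nonpositively. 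Hence $\<\beta,\alpha_j^\vee\>\geq -1$ and $\<\mu',\alpha_j^\vee\>\geq 0$, so $\mu'$ is dominant, which completes the proof.

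The only delicate point---and the sole place the simply-laced assumption enters---is the bound $\<\beta,\alpha_j^\vee\>\geq -1$ for $j\notin I$. In a multiply-laced type this pairing can be $-2$ or $-3$, which the single unit of room provided by $\<\lambda,\alpha_j^\vee\>\geq 1$ would not absorb, so dominance in the directions leaving $I$ could genuinely fail; by contrast the support statement for $\alpha$, the existence of a dominant root in a $W_I$-orbit, and the $W_I$-invariance of $\lambda$ all hold for arbitrary $\Phi$.
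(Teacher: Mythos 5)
Your proof is correct and follows essentially the same route as the paper's: lift the orbit move to $\lambda\raPP\lambda+\alpha$ via Proposition~\ref{prop:rapp_lift}, observe $\alpha\in\Phi_{I^0_\lambda}$, replace $\alpha$ by the highest root of its irreducible component (legitimate since the parabolic Weyl group fixes $\lambda$ and, by simply-lacedness, acts transitively on the roots of that component), and verify dominance by splitting on whether $\<\lambda,\alpha_i^\vee\>$ vanishes, using the bound $\<\beta,\alpha_j^\vee\>\geq-1$. The only differences are that you spell out two facts the paper takes for granted (the support argument forcing $\alpha\in\Phi_I$, and the Cartan-integer bound), which is fine.
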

\begin{proof}
  Let $\lambda \in P$ be dominant and suppose that $\W\lambda \raPP \W\mu$. By Proposition~\ref{prop:rapp_lift}, we may assume that $\lambda \raPP \mu$ so let $\beta\in\Phi^+$ be such that $\mu=\lambda+\beta$. Since $\<\lambda,\beta^\vee\> = 0$, we have $\beta \in \Phi_{I^0_{\lambda}}$. Let $\Phi'\subseteq \Phi_{I^0_{\lambda}}$  be the irreducible sub-root system of $\Phi_{I^0_{\lambda}}$ that contains $\beta$. 
  
  Let $\theta'$ be the highest root of $\Phi'$. We claim that $\lambda+\theta'$ is a dominant weight that belongs to $\W\mu$. First note that since $\Phi$ is simply laced and $\Phi'$ is irreducible, $\theta'$ can be obtained from $\beta$ by the action of the Weyl group $W'$ of $\Phi'$ (which stabilizes $\lambda$), and thus $\lambda+\theta'\in\W\mu$. Second, let us show that $\lambda+\theta'$ is dominant. For any simple root~$\alpha_i$, we have
  \[\<\lambda+\theta',\alpha^\vee_i\>=\<\lambda,\alpha^\vee_i\>+\<\theta',\alpha^\vee_i\>.\]
  Suppose the first term $\<\lambda,\alpha^\vee_i\>$ in the right hand side is nonzero; then it must be positive. The second term $\<\theta',\alpha^\vee_i\>$ is greater than or equal to $-1$ because $\Phi$ is simply laced. Therefore, their sum is nonnegative. Suppose now that $\<\lambda,\alpha^\vee_i\>$ is zero. Then $\alpha_i$ is a simple root of $\Phi_{I^0_{\lambda}}$ and hence $\<\theta',\alpha^\vee_i\>\geq0$. 
\end{proof}

\begin{remark} \label{rem:nonsimplylaced_central}
Proposition~\ref{prop:simply_laced_dominant} does not hold in general when $\Phi$ is not simply laced. This is already apparent for $\Phi=B_2$ and $\Phi=G_2$ when starting from the fundamental weight corresponding to the long simple root.
\end{remark}

Let us now explicitly describe the relation $\raPP$ on $P/W$ for simply laced root systems. To do so, we need to discuss affine Dynkin diagrams. Associated to every connected, simply laced Dynkin diagram $\DD$ with vertex set $[n]$ is a (unique) \emph{affine Dynkin diagram}, denoted~$\DDaff$, with vertex set $[n]\cup\{0\}$ and which contains $\DD$ as a subgraph. These affine Dynkin diagrams are depicted in Figure~\ref{fig:affinedds}. See~\cite[VI, \S3]{bourbaki2002lie} for a precise definition.

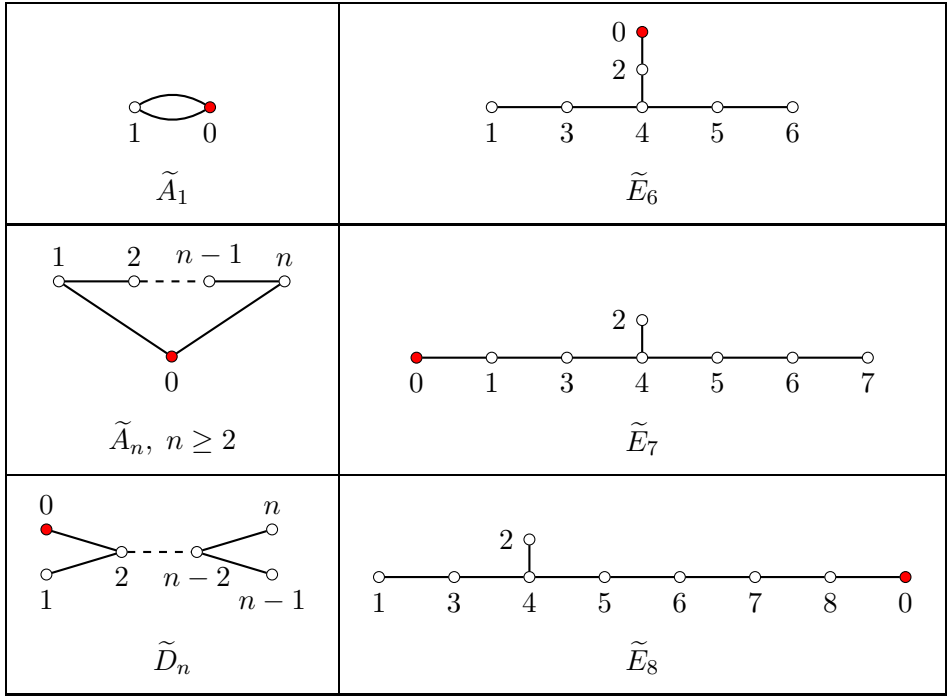
\begin{figure}
  \def\scl{0.4}
\begin{tabular}{|c|c|}\hline
\begin{tikzpicture}
\node[anchor=north] (A2) at (0,0) {\begin{tikzpicture}
\node[scale=\scl,draw,circle] (1) at (-1,0) {};
\node[scale=\scl,draw,circle,fill=red] (0) at (0,0) {};
\node[anchor=north] at (1.south) {$1$};
\node[anchor=north] at (0.south) {$0$};
\draw[thick] (1) to [bend left] (0);
\draw[thick] (1) to [bend right] (0);
\end{tikzpicture}};
\node[anchor=north] (A2l) at (A2.south) {$\widetilde{A}_1$};
\end{tikzpicture} &
\begin{tikzpicture}
\node[anchor=north] (E6) at (6,0) {\begin{tikzpicture}
\node[scale=\scl,draw,circle] (1) at (-1,0) {};
\node[scale=\scl,draw,circle] (2) at (1,0.5) {};
\node[scale=\scl,draw,circle] (3) at (0,0) {};
\node[scale=\scl,draw,circle] (4) at (1,0) {};
\node[scale=\scl,draw,circle] (5) at (2,0) {};
\node[scale=\scl,draw,circle] (6) at (3,0) {};
\node[scale=\scl,draw,circle,fill=red] (0) at (1,1) {};
\node[anchor=north] at (1.south) {$1$};
\node[anchor=east] at (2.west) {$2$};
\node[anchor=north] at (3.south) {$3$};
\node[anchor=north] at (4.south) {$4$};
\node[anchor=north] at (5.south) {$5$};
\node[anchor=north] at (6.south) {$6$};
\node[anchor=east] at (0.west) {$0$};
\draw[thick] (1)--(3)--(4)--(5)--(6);
\draw[thick] (0)--(2)--(4);
\end{tikzpicture}};
\node[anchor=north] (E6l) at (E6.south) {$\widetilde{E}_6$};
\end{tikzpicture}\\\hline
\begin{tikzpicture}
\node[anchor=north] (A) at (0,0) {\begin{tikzpicture}
\node[scale=\scl,draw,circle] (1) at (-1,0) {};
\node[scale=\scl,draw,circle] (2) at (0,0) {};
\node[scale=\scl,draw,circle] (3) at (1,0) {};
\node[scale=\scl,draw,circle] (4) at (2,0) {};
\node[scale=\scl,draw,circle,fill=red] (0) at (0.5,-1) {};
\node[anchor=south] at (1.north) {$1$};
\node[anchor=south] at (2.north) {$2$};
\node[anchor=south] at (3.north) {${n-1}$};
\node[anchor=south] at (4.north) {$n$};
\node[anchor=north] at (0.south) {$0$};
\draw[thick] (1)--(2);
\draw[thick,dashed] (2)--(3);
\draw[thick] (3)--(4);
\draw[thick] (1)--(0)--(4);
\end{tikzpicture}};
\node[anchor=north] (Al) at (A.south) {$\widetilde{A}_n, \; n\geq 2$};
\end{tikzpicture}&
\begin{tikzpicture}
\node[anchor=north] (E7) at (E6l.south) {\begin{tikzpicture}
\node[scale=\scl,draw,circle] (1) at (-1,0) {};
\node[scale=\scl,draw,circle] (2) at (1,0.5) {};
\node[scale=\scl,draw,circle] (3) at (0,0) {};
\node[scale=\scl,draw,circle] (4) at (1,0) {};
\node[scale=\scl,draw,circle] (5) at (2,0) {};
\node[scale=\scl,draw,circle] (6) at (3,0) {};
\node[scale=\scl,draw,circle] (7) at (4,0) {};
\node[scale=\scl,draw,circle,fill=red] (0) at (-2,0) {};
\node[anchor=north] at (1.south) {$1$};
\node[anchor=east] at (2.west) {$2$};
\node[anchor=north] at (3.south) {$3$};
\node[anchor=north] at (4.south) {$4$};
\node[anchor=north] at (5.south) {$5$};
\node[anchor=north] at (6.south) {$6$};
\node[anchor=north] at (7.south) {$7$};
\node[anchor=north] at (0.south) {$0$};
\draw[thick] (0)--(1)--(3)--(4)--(5)--(6)--(7);
\draw[thick] (2)--(4);
\end{tikzpicture}};
\node[anchor=north] (E7l) at (E7.south) {$\widetilde{E}_7$};
\end{tikzpicture}\\\hline
\begin{tikzpicture}
\node[anchor=north] (D) at (0,0) {\begin{tikzpicture}
\node[scale=\scl,draw,circle] (1) at (-1,-0.3) {};
\node[scale=\scl,draw,circle] (2) at (0,0) {};
\node[scale=\scl,draw,circle] (3) at (1,0) {};
\node[scale=\scl,draw,circle] (4) at (2,-0.3) {};
\node[scale=\scl,draw,circle] (5) at (2,0.3) {};
\node[scale=\scl,draw,circle,fill=red] (0) at (-1,0.3) {};
\node[anchor=north] at (1.south) {$1$};
\node[anchor=north] at (2.south) {$2$};
\node[anchor=north] at (3.south) {${n-2}$};
\node[anchor=north] at (4.south) {${n-1}$};
\node[anchor=south] at (5.north) {$n$};
\node[anchor=south] at (0.north) {$0$};
\draw[thick] (1)--(2);
\draw[thick] (0)--(2);
\draw[thick,dashed] (2)--(3);
\draw[thick] (3)--(4);
\draw[thick] (3)--(5);
\end{tikzpicture}};
\node[anchor=north] (Dl) at (D.south) {$\widetilde{D}_n$};
\end{tikzpicture} &
\begin{tikzpicture}
\node[anchor=north] (E8) at (E7l.south) {\begin{tikzpicture}
\node[scale=\scl,draw,circle] (1) at (-1,0) {};
\node[scale=\scl,draw,circle] (2) at (1,0.5) {};
\node[scale=\scl,draw,circle] (3) at (0,0) {};
\node[scale=\scl,draw,circle] (4) at (1,0) {};
\node[scale=\scl,draw,circle] (5) at (2,0) {};
\node[scale=\scl,draw,circle] (6) at (3,0) {};
\node[scale=\scl,draw,circle] (7) at (4,0) {};
\node[scale=\scl,draw,circle] (8) at (5,0) {};
\node[scale=\scl,draw,circle,fill=red] (0) at (6,0) {};
\node[anchor=north] at (1.south) {$1$};
\node[anchor=east] at (2.west) {$2$};
\node[anchor=north] at (3.south) {$3$};
\node[anchor=north] at (4.south) {$4$};
\node[anchor=north] at (5.south) {$5$};
\node[anchor=north] at (6.south) {$6$};
\node[anchor=north] at (7.south) {$7$};
\node[anchor=north] at (8.south) {$8$};
\node[anchor=north] at (0.south) {$0$};
\draw[thick] (1)--(3)--(4)--(5)--(6)--(7)--(8)--(0);
\draw[thick] (2)--(4);
\end{tikzpicture}};
\node[anchor=north] (E8l) at (E8.south) {$\widetilde{E}_8$};
\end{tikzpicture}\\\hline
\end{tabular}
\caption{The affine Dynkin diagrams. The ``affine node'' $0$ is filled in red.} \label{fig:affinedds}
\end{figure}

We also need the following lemma relating affine Dynkin diagrams to highest roots. 

\begin{lemma}[{See~\cite[VI, \S3]{bourbaki2002lie}}] \label{lem:affinehighest}
 If $\Phi$ is simply laced and $\DD$ is its Dynkin diagram, then we have~$\theta= \sum_{i=1}^{n}c_i\omega_i$, where $c_i$ is the number of edges between $i$ and $0$ in~$\DDaff$.
\end{lemma}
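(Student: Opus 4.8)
The plan is to reduce the statement to a direct computation of the Cartan integers $\langle\theta,\alpha_i^\vee\rangle$. First I would record the elementary fact that, since the fundamental weights $\omega_1,\dots,\omega_n$ form the basis of $V$ dual to the simple coroots $\alpha_1^\vee,\dots,\alpha_n^\vee$ (by the defining relation $\langle\omega_i,\alpha_j^\vee\rangle=\delta_{i,j}$), every vector $v\in V$ satisfies $v=\sum_{i=1}^n\langle v,\alpha_i^\vee\rangle\,\omega_i$. Applying this to $v=\theta$ shows that the coefficient of $\omega_i$ in $\theta$ is exactly $c_i=\langle\theta,\alpha_i^\vee\rangle$. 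Since $\Phi$ is simply laced, $\theta$ is a dominant weight (as noted in Section~\ref{sec:background}), so each $c_i=\langle\theta,\alpha_i^\vee\rangle\geq 0$ is a nonnegative integer; it thus suffices to identify this integer with the number of edges joining $i$ and $0$ in $\DDaff$.

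Next I would invoke the definition of the affine Dynkin diagram (as in~\cite[VI, \S3]{bourbaki2002lie}): the affine node $0$ is obtained by adjoining the lowest root $\alpha_0:=-\theta$ to the simple roots, and the edges incident to $0$ are drawn by the same rule used for $\DD$, now applied to the Cartan integers $\langle\alpha_0,\alpha_i^\vee\rangle$ and $\langle\alpha_i,\alpha_0^\vee\rangle$. Because $\Phi$ is simply laced, the highest root $\theta$ has the same length as every simple root, so $\alpha_0=-\theta$ does too; hence $\langle\alpha_0,\alpha_i^\vee\rangle=\langle\alpha_i,\alpha_0^\vee\rangle$ and the edges at $0$ are undirected. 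Using $\langle\alpha_0,\alpha_i^\vee\rangle=-\langle\theta,\alpha_i^\vee\rangle=-c_i$, the edge-drawing rule assigns exactly $|{-c_i}|=c_i$ undirected edges between $0$ and $i$, which is precisely the claim.

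The one point requiring care is the possibility $c_i\geq 2$, i.e.\ a multiple edge at the affine node. For two roots $\beta,\gamma$ of equal length one has $\langle\beta,\gamma^\vee\rangle\in\{-2,-1,0,1,2\}$, with $\pm 2$ occurring only when $\beta=\pm\gamma$. Thus $c_i=\langle\theta,\alpha_i^\vee\rangle=2$ can occur only if $\theta=\alpha_i$, which forces $\Phi=A_1$; in that case $\alpha_0=-\alpha_1$, the nodes $0$ and $1$ are joined by a double edge in $\DDaff=\widetilde{A}_1$, and this matches $c_1=2$. In every other case $c_i\in\{0,1\}$, so $0$ and $i$ are joined by no edge or a single edge according as $\langle\theta,\alpha_i^\vee\rangle$ vanishes or equals $1$. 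I expect the main obstacle to be purely bookkeeping: carefully reconciling the sign conventions of the edge-drawing rule stated in Section~\ref{sec:background} with the non-positivity of $\langle\alpha_i,\alpha_j^\vee\rangle$ for distinct simple roots, and correctly accommodating the exceptional double edge of $\widetilde{A}_1$.
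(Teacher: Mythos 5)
Your proof is correct. Note that the paper offers no proof of this lemma at all: it is stated as a known fact with a citation to Bourbaki~\cite[VI, \S3]{bourbaki2002lie}, and indeed the affine diagram $\DDaff$ itself is only defined in the paper by reference to that same source. What you have done is unfold that definition: $\DDaff$ is the Dynkin diagram of the extended simple system $\{\alpha_0:=-\theta,\alpha_1,\dots,\alpha_n\}$, so the lemma follows from the dual-basis expansion $\theta=\sum_{i=1}^n\langle\theta,\alpha_i^\vee\rangle\,\omega_i$ together with the observation that, in the simply laced case, the (symmetric) Cartan integers at the affine node have absolute value exactly $c_i$. Your handling of the two delicate points is right: dominance of $\theta$ gives $c_i\geq 0$, and $c_i=2$ forces $\theta=\alpha_i$, hence $\Phi=A_1$ (here you implicitly use irreducibility, which the paper assumes throughout and which is needed, since the highest root of an irreducible system of rank at least $2$ has all simple-root coefficients positive and so cannot be simple); in that exceptional case the double edge of $\widetilde{A}_1$ matches $c_1=\langle\alpha_1,\alpha_1^\vee\rangle=2$. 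Your argument is thus a valid self-contained replacement for the citation; what it buys is a first-principles derivation consistent with the paper's Figure of affine diagrams, at the cost of having to repair the sign conventions in the paper's stated edge-drawing rule (which, as written, uses $+1$ and $+k$ where the Cartan integers of distinct simple roots are in fact nonpositive) and of treating $\widetilde{A}_1$ as a case falling outside that rule, both of which you do explicitly.
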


\begin{definition}\label{dfn:UCF}
Let $\DD$ be a simply laced Dynkin diagram with vertex set $[n]$. Let $\fund:[n]\to \mathbb{Z}_{\geq0}$ be an assignment of nonnegative integers to the vertices of $\DD$. An \emph{unlabeled central-firing move} (a \emph{UCF move} for short) is an application of the following sequence of steps to $\fund$:
  \begin{enumerate}[\normalfont(1)]
  \item choose a \emph{zero connected component} $\compFund$ of $\fund$, that is, a connected component of the induced subgraph of $\DD$ with vertex set $\{i\in [n]\colon \fund(i)=0\}$;
  \item\label{step:complete} complete $\compFund$ to an affine Dynkin diagram $\compFundaff$ with vertex set $R\cup\{\extraRoot\}$;
  \item\label{step:increase} for every edge $\{\extraRoot,i\}$ of $\compFundaff$, increase $\fund(i)$ by $1$;
    \item\label{step:decrease} for every vertex $j\notin\compFund$ that is adjacent to a vertex $i\in\compFund$, decrease $\fund(j)$ by $1$.
    \end{enumerate} 
    We denote the resulting assignment of integers by $\fund'$ and write $\fund\raFund\fund'$. We say that~$\fund'$ is obtained from $\fund$ \emph{via a UCF move along $\compFund$}.
  \end{definition}

  \begin{example}\label{example:unlabeled_E7}
  Let us illustrate this definition by an example for $\Phi$ of Type $E_7$. Consider an assignment $\fund$ shown in Figure~\ref{fig:E7_unlabeled_confluence} (top). It has two zero connected components: $\compFund_1$ of Type $D_5$ and $\compFund_2$ of Type $A_1$. Applying a UCF move to $\fund$ along $\compFund_1$ (resp., along $\compFund_2$) produces assignments $\fund_1'$ (resp., $\fund_2'$) shown in Figure~\ref{fig:E7_unlabeled_confluence} (middle-left), resp., (middle-right). Note that $\fund_1'$ has a zero connected component of Type $A_5$ that contains $\compFund_2$, and similarly, $\fund_2'$ has a zero connected component of Type $E_6$ that contains $\compFund_1$. Moreover, applying another UCF move along the corresponding zero connected component of $\fund_1'$ (resp., of $\fund_2'$) actually produces the same result $\fund''$ shown in Figure~\ref{fig:E7_unlabeled_confluence} (bottom).
  \end{example}
 
  \begin{figure}

    \def\scl{0.2}
    \def\tikzscl{0.8}
    \def\gammascl{1.2}
    \def\Eseven{
      \node[scale=\scl,draw,circle,fill=black] (A) at (-2,0) {};
      \node[scale=\scl,draw,circle,fill=black] (B) at (-1,0) {};
      \node[scale=\scl,draw,circle,fill=black] (C) at (0,0) {};
      \node[scale=\scl,draw,circle,fill=black]  (D) at (1,0) {};
      \node[scale=\scl,draw,circle,fill=black] (E) at (2,0) {};
      \node[scale=\scl,draw,circle,fill=black] (F) at (3,0) {};
      \node[scale=\scl,draw,circle,fill=black] (G) at (0,1) {};
      \draw (A)--(B)--(C)--(D)--(E)--(F);
      \draw (C)--(G);
    }
    \newcommand{\Esevenargs}[7]{
      \Eseven
      \node[anchor=north] (X) at (A.south) {$#1$};
      \node[anchor=north] (X) at (B.south) {$#2$};
      \node[anchor=north] (X) at (C.south) {$#3$};
      \node[anchor=north] (X) at (D.south) {$#4$};
      \node[anchor=north] (X) at (E.south) {$#5$};
      \node[anchor=north] (X) at (F.south) {$#6$};
      \node[anchor=west] (X) at (G.east) {$#7$};
    }
    \def\dy{-0.5}
    \def\dx{0.7}
    \def\intscale{0.7}
    \def\intscaletext{0.7}
    \def\circlerad{4pt}
    \begin{tikzpicture}[scale=0.6]
      \node (top) at (0,0) {
        \begin{tikzpicture} 
          \node[scale=\gammascl,anchor=east] (GM) at (-2.5,0) {$\fund=$};
          \Esevenargs 0 0 0 0 1 0 0
        \end{tikzpicture}
      };
      \node[anchor=north east] (topleft) at ($(top.-110)+(-\dx,\dy)$) {
        \begin{tikzpicture}[scale=\intscale]
          \Eseven
          \draw[line width=1.5pt,draw=blue] (A)--(B)--(C)--(D);
          \draw[line width=1.5pt,draw=blue] (C)--(G);
          \coordinate (Z) at (-1,-1);
          \draw[line width=1.5pt,draw=red] (B)--(Z);
          
          \draw[fill=blue,draw=black,thick] (A) circle (\circlerad);
          \draw[fill=blue,draw=black,thick] (B) circle (\circlerad);
          \draw[fill=blue,draw=black,thick] (C) circle (\circlerad);
          \draw[fill=blue,draw=black,thick] (D) circle (\circlerad);
          \draw[fill=blue,draw=black,thick] (G) circle (\circlerad);
          \draw[fill=red,draw=black,thick] (Z) circle (\circlerad);
          \node[anchor=south,red,scale=\intscaletext] (RR) at (B.north) {$+1$}; 
          \node[anchor=south,blue,scale=\intscaletext] (BB) at (E.north) {$-1$};          
        \end{tikzpicture}
      };
      \node[anchor=north west] (topright) at ($(top.-110)+(\dx,\dy)$) {
        \begin{tikzpicture}[scale=\intscale]
          \Eseven
          \coordinate (Z) at (3,1);
          \coordinate (ZZ) at (2,-1);
          \draw[line width=1.5pt,draw=red] (F) to[bend right=20] (Z);
          \draw[line width=1.5pt,draw=red] (F)  to[bend left=20] (Z);
          \draw[fill=blue,draw=black,thick] (F) circle (\circlerad);
          \draw[fill=red,draw=black,thick] (Z) circle (\circlerad);
          \draw[draw=white,thick] (ZZ) circle (\circlerad);
          \node[anchor=north,red,scale=\intscaletext] (RR) at (F.south) {$+2$}; 
          \node[anchor=south,blue,scale=\intscaletext] (BB) at (E.north) {$-1$};  
          
        \end{tikzpicture}
      };
      \node[anchor=north] (bottomleft) at  ($(topleft.south)+(-3*\dx,\dy)$) {
        \begin{tikzpicture}
          \node[scale=\gammascl,anchor=east] (GM) at (-2.5,0) {$\fund'_1=$};
          \Esevenargs 0 1 0 0 0 0 0
        \end{tikzpicture}
      };
      \node[anchor=north] (bottomright) at ($(topright.south)+(3*\dx,\dy)$) {
        \begin{tikzpicture}
          \node[scale=\gammascl,anchor=west] (GM) at (3.5,0) {$=\fund'_2$};
          \Esevenargs 0 0 0 0 0 2 0
        \end{tikzpicture}
      };
      \node[anchor=north] (bottomleftdown) at  ($(bottomleft.south)+(3*\dx,\dy)$) {
        \begin{tikzpicture}[scale=\intscale]
          \Eseven
          \draw[line width=1.5pt,draw=blue] (C)--(D)--(E)--(F);
          \draw[line width=1.5pt,draw=blue] (C)--(G);
          \coordinate (Z) at (2,1);
          \coordinate (ZZ) at (2,2);
          \draw[line width=1.5pt,draw=red] (G)--(Z);
          \draw[line width=1.5pt,draw=red] (F)--(Z);
          
          \draw[fill=blue,draw=black,thick] (C) circle (\circlerad);
          \draw[fill=blue,draw=black,thick] (D) circle (\circlerad);
          \draw[fill=blue,draw=black,thick] (E) circle (\circlerad);
          \draw[fill=blue,draw=black,thick] (F) circle (\circlerad);
          \draw[fill=blue,draw=black,thick] (G) circle (\circlerad);
          \draw[fill=red,draw=black,thick] (Z) circle (\circlerad);
          \draw[draw=white,thick] (ZZ) circle (\circlerad);
          \node[anchor=south,red,scale=\intscaletext] (RR) at (G.north) {$+1$}; 
          \node[anchor=north,red,scale=\intscaletext] (RR) at (F.south) {$+1$}; 
          \node[anchor=south,blue,scale=\intscaletext] (BB) at (B.north) {$-1$};
        \end{tikzpicture}
      };
      \node[anchor=north] (bottomrightdown) at ($(bottomright.south)+(-3*\dx,\dy)$) {
        \begin{tikzpicture}[scale=\intscale]
          \Eseven
          \draw[line width=1.5pt,draw=blue] (A)--(B)--(C)--(D)--(E);
          \draw[line width=1.5pt,draw=blue] (C)--(G);
          \coordinate (Z) at (0,2);
          \draw[line width=1.5pt,draw=red] (G)--(Z);
          
          \draw[fill=blue,draw=black,thick] (A) circle (\circlerad);
          \draw[fill=blue,draw=black,thick] (B) circle (\circlerad);
          \draw[fill=blue,draw=black,thick] (C) circle (\circlerad);
          \draw[fill=blue,draw=black,thick] (D) circle (\circlerad);
          \draw[fill=blue,draw=black,thick] (E) circle (\circlerad);
          \draw[fill=blue,draw=black,thick] (G) circle (\circlerad);
          \draw[fill=red,draw=black,thick] (Z) circle (\circlerad);
          \node[anchor=west,red,scale=\intscaletext] (RR) at (G.east) {$+1$}; 
          \node[anchor=south,blue,scale=\intscaletext] (BB) at (F.north) {$-1$};
        \end{tikzpicture}
      };
      
      \path let \p1 = ($(bottomleftdown.south)+(0,\dy)$) in node[anchor=north] (down)  at (0,\y1) {
        \begin{tikzpicture}
          \node[scale=\gammascl,anchor=east] (GM) at (-2.5,0) {$\fund''=$};
          \Esevenargs 0 0 0 0 0 1 1
        \end{tikzpicture}
      };
      \draw[->,dashed] (top)--(topleft);
      \draw[->,dashed] (topleft)--(bottomleft);
      \draw[->,dashed] (bottomleft)--(bottomleftdown);
      \draw[->,dashed] (bottomleftdown)--(down);
      \draw[->,dashed] (top)--(topright);
      \draw[->,dashed] (topright)--(bottomright);
      \draw[->,dashed] (bottomright)--(bottomrightdown);
      \draw[->,dashed] (bottomrightdown)--(down);
    \end{tikzpicture}
    \caption{Applying UCF moves to the Dynkin diagram of $E_7$ (see Example~\ref{example:unlabeled_E7}). For each move, the component $\compFund$ is shown in blue, the extra node $\extraRoot$ of $\compFundaff$ is shown in red, changes from step~\eqref{step:increase} are shown in red,  and changes from step~\eqref{step:decrease} are shown in blue.} \label{fig:E7_unlabeled_confluence} 
  \end{figure}

\begin{example}
All states of the classical chip-firing process starting with four chips at the origin are shown on the left of Figure~\ref{fig:side_by_side}; meanwhile, all states of the unlabeled central-firing process starting from $0$ in Type $A_3$ are shown on the right of Figure~\ref{fig:side_by_side}.
\end{example}

\begin{figure}
  \def\nodesclbig{0.7}
  \def\shift{0.7cm}
  \def\horshift{0.2cm}
  \def\sclAthree{0.3}
  \def\nodescaleAthree{1}
  \def\scaleAthree{2}

\newcommand{\AthreeArgs}[3]{
      \node[scale=\sclAthree,draw,circle,fill=black] (A) at (-\scaleAthree,0.5) {};
    \node[scale=\sclAthree,draw,circle,fill=black] (B) at (0,0.5) {};
    \node[scale=\sclAthree,draw,circle,fill=black] (C) at (\scaleAthree,0.5) {};
    \draw (A)--(B)--(C);
    \node[anchor=north,scale=\nodescaleAthree] (AA) at (A.south) {$#1$};
    \node[anchor=north,scale=\nodescaleAthree] (BB) at (B.south) {$#2$};
    \node[anchor=north,scale=\nodescaleAthree] (CC) at (C.south) {$#3$};
      }
  
  \begin{tabular}{c|c}
\begin{tikzpicture}
  \node[scale=\nodesclbig] (top1) at (0,0) {
    \begin{tikzpicture}[scale=\chiptikzscl,block/.style={anchor=center,draw,circle, minimum width={width("11")+12pt},
font=\small,scale=\chipscl}]
    \foreach \x in {-2,-1,...,2} {%
      \node[anchor=north] (A\x) at (\x,0) {$\x$};
    }
    \draw (-2.2,0) -- (2.2,0);
    \draw[dashed] (-3,0) -- (3,0);
    \foreach[count=\i] \a/\b in {0/1,0/2,0/3,0/4} {%
      \node[block] at (\a,{\b*\chipscl*\chipcoef-0.5*\chipscl*\chipcoef}) { };
      }
\end{tikzpicture}
};

\node[scale=\nodesclbig,below=\shift of top1] (top2) {
    \begin{tikzpicture}[scale=\chiptikzscl,block/.style={anchor=center,draw,circle, minimum width={width("11")+12pt},
font=\small,scale=\chipscl}]
    \foreach \x in {-2,-1,...,2} {%
      \node[anchor=north] (A\x) at (\x,0) {$\x$};
    }
    \draw (-2.2,0) -- (2.2,0);
    \draw[dashed] (-3,0) -- (3,0);
    \foreach[count=\i] \a/\b in {1/1,0/1,0/2,-1/1} {%
      \node[block] at (\a,{\b*\chipscl*\chipcoef-0.5*\chipscl*\chipcoef}) { };
      }
\end{tikzpicture}
  };
\node[scale=\nodesclbig,below=\shift of top2] (top3) {
    \begin{tikzpicture}[scale=\chiptikzscl,block/.style={anchor=center,draw,circle, minimum width={width("11")+12pt},
font=\small,scale=\chipscl}]
    \foreach \x in {-2,-1,...,2} {%
      \node[anchor=north] (A\x) at (\x,0) {$\x$};
    }
    \draw (-2.2,0) -- (2.2,0);
    \draw[dashed] (-3,0) -- (3,0);
    \foreach[count=\i] \a/\b in {1/1,1/2,-1/1,-1/2} {%
      \node[block] at (\a,{\b*\chipscl*\chipcoef-0.5*\chipscl*\chipcoef}) { };
      }
\end{tikzpicture}
  };
\node[scale=\nodesclbig,below=\shift of top3] (top4) {
    \begin{tikzpicture}[scale=\chiptikzscl,block/.style={anchor=center,draw,circle, minimum width={width("11")+12pt},
font=\small,scale=\chipscl}]
    \foreach[count=\i] \a/\b in {0/0,0/1,0/2} {%
      \node[block,white] at (\a,{\b*\chipscl*\chipcoef-0.5*\chipscl*\chipcoef}) { };
      }
\end{tikzpicture}
  };
\node[scale=\nodesclbig,left=\horshift of top4] (left) {
    \begin{tikzpicture}[scale=\chiptikzscl,block/.style={anchor=center,draw,circle, minimum width={width("11")+12pt},
font=\small,scale=\chipscl}]
    \foreach \x in {-2,-1,...,2} {%
      \node[anchor=north] (A\x) at (\x,0) {$\x$};
    }
    \draw (-2.2,0) -- (2.2,0);
    \draw[dashed] (-3,0) -- (3,0);
    \foreach[count=\i] \a/\b in {1/1,1/2,0/1,-2/1} {%
      \node[block] at (\a,{\b*\chipscl*\chipcoef-0.5*\chipscl*\chipcoef}) { };
      }
\end{tikzpicture}
  };
\node[scale=\nodesclbig,right=\horshift of top4] (right) {
    \begin{tikzpicture}[scale=\chiptikzscl,block/.style={anchor=center,draw,circle, minimum width={width("11")+12pt},
font=\small,scale=\chipscl}]
    \foreach \x in {-2,-1,...,2} {%
      \node[anchor=north] (A\x) at (\x,0) {$\x$};
    }
    \draw (-2.2,0) -- (2.2,0);
    \draw[dashed] (-3,0) -- (3,0);
    \foreach[count=\i] \a/\b in {2/1,0/1,-1/1,-1/2} {%
      \node[block] at (\a,{\b*\chipscl*\chipcoef-0.5*\chipscl*\chipcoef}) { };
      }
\end{tikzpicture}
  };
\node[scale=\nodesclbig,below=\shift of top4] (bottom1) {
    \begin{tikzpicture}[scale=\chiptikzscl,block/.style={anchor=center,draw,circle, minimum width={width("11")+12pt},
font=\small,scale=\chipscl}]
    \foreach \x in {-2,-1,...,2} {%
      \node[anchor=north] (A\x) at (\x,0) {$\x$};
    }
    \draw (-2.2,0) -- (2.2,0);
    \draw[dashed] (-3,0) -- (3,0);
    \foreach[count=\i] \a/\b in {2/1,0/1,0/2,-2/1} {%
      \node[block] at (\a,{\b*\chipscl*\chipcoef-0.5*\chipscl*\chipcoef}) { };
      }
\end{tikzpicture}
  };
\node[scale=\nodesclbig,below=\shift of bottom1] (bottom2) {
    \begin{tikzpicture}[scale=\chiptikzscl,block/.style={anchor=center,draw,circle, minimum width={width("11")+12pt},
font=\small,scale=\chipscl}]
    \foreach \x in {-2,-1,...,2} {%
      \node[anchor=north] (A\x) at (\x,0) {$\x$};
    }
    \draw (-2.2,0) -- (2.2,0);
    \draw[dashed] (-3,0) -- (3,0);
    \foreach[count=\i] \a/\b in {2/1,1/1,-1/1,-2/1} {%
      \node[block] at (\a,{\b*\chipscl*\chipcoef-0.5*\chipscl*\chipcoef}) { };
      }
\end{tikzpicture}
  };

  \draw[->,dashed] (top1)--(top2);
  \draw[->,dashed] (top2)--(top3);
  \draw[->,dashed] (top3)--(left);
  \draw[->,dashed] (top3)--(right);
  \draw[->,dashed] (left)--(bottom1);
  \draw[->,dashed] (right)--(bottom1);
  \draw[->,dashed] (bottom1)--(bottom2);

\end{tikzpicture}
    &

\begin{tikzpicture}
  \node[scale=\nodesclbig] (top1) at (0,0) {
    \begin{tikzpicture}[scale=\chiptikzscl,block/.style={anchor=center,draw,circle, minimum width={width("11")+12pt},
font=\small,scale=\chipscl}]
    \foreach \x in {-2,-1,...,2} {%
      \node[white,anchor=north] (A\x) at (\x,0) {$\x$};
    }
    \draw[white] (-2.2,0) -- (2.2,0);
    \draw[dashed,white] (-3,0) -- (3,0);
    \foreach[count=\i] \a/\b in {0/1,0/2,0/3,0/4} {%
      \node[white,block] at (\a,{\b*\chipscl*\chipcoef-0.5*\chipscl*\chipcoef}) { };
    }
    \AthreeArgs 0 0 0
     
\end{tikzpicture}
};

\node[scale=\nodesclbig,below=\shift of top1] (top2) {
    \begin{tikzpicture}[scale=\chiptikzscl,block/.style={anchor=center,draw,circle, minimum width={width("11")+12pt},
font=\small,scale=\chipscl}]
    \foreach \x in {-2,-1,...,2} {%
      \node[white,anchor=north] (A\x) at (\x,0) {$\x$};
    }
    \draw[white] (-2.2,0) -- (2.2,0);
    \draw[white,dashed] (-3,0) -- (3,0);
    \foreach[count=\i] \a/\b in {1/1,0/1,0/2,-1/1} {%
      \node[white,block] at (\a,{\b*\chipscl*\chipcoef-0.5*\chipscl*\chipcoef}) { };
    }

    \AthreeArgs 1 0 1
\end{tikzpicture}
  };
\node[scale=\nodesclbig,below=\shift of top2] (top3) {
    \begin{tikzpicture}[scale=\chiptikzscl,block/.style={anchor=center,draw,circle, minimum width={width("11")+12pt},
font=\small,scale=\chipscl}]
    \foreach \x in {-2,-1,...,2} {%
      \node[white,anchor=north] (A\x) at (\x,0) {$\x$};
    }
    \draw[white] (-2.2,0) -- (2.2,0);
    \draw[white,dashed] (-3,0) -- (3,0);
    \foreach[count=\i] \a/\b in {1/1,1/2,-1/1,-1/2} {%
      \node[white,block] at (\a,{\b*\chipscl*\chipcoef-0.5*\chipscl*\chipcoef}) { };
      }
    \AthreeArgs 0 2 0
\end{tikzpicture}
  };
\node[scale=\nodesclbig,below=\shift of top3] (top4) {
    \begin{tikzpicture}[scale=\chiptikzscl,block/.style={anchor=center,draw,circle, minimum width={width("11")+12pt},
font=\small,scale=\chipscl}]
    \foreach[count=\i] \a/\b in {0/0,0/1,0/2} {%
      \node[block,white] at (\a,{\b*\chipscl*\chipcoef-0.5*\chipscl*\chipcoef}) { };
      }
\end{tikzpicture}
  };
\node[scale=\nodesclbig,left=\horshift of top4] (left) {
    \begin{tikzpicture}[scale=\chiptikzscl,block/.style={anchor=center,draw,circle, minimum width={width("11")+12pt},
font=\small,scale=\chipscl}]
    \foreach \x in {-2,-1,...,2} {%
      \node[white,anchor=north] (A\x) at (\x,0) {$\x$};
    }
    \draw[white] (-2.2,0) -- (2.2,0);
    \draw[white,dashed] (-3,0) -- (3,0);
    \foreach[count=\i] \a/\b in {1/1,1/2,0/1,-2/1} {%
      \node[white,block] at (\a,{\b*\chipscl*\chipcoef-0.5*\chipscl*\chipcoef}) { };
    }
    
    \AthreeArgs 0 1 2
\end{tikzpicture}
  };
\node[scale=\nodesclbig,right=\horshift of top4] (right) {
    \begin{tikzpicture}[scale=\chiptikzscl,block/.style={anchor=center,draw,circle, minimum width={width("11")+12pt},
font=\small,scale=\chipscl}]
    \foreach \x in {-2,-1,...,2} {%
      \node[white,anchor=north] (A\x) at (\x,0) {$\x$};
    }
    \draw[white] (-2.2,0) -- (2.2,0);
    \draw[white,dashed] (-3,0) -- (3,0);
    \foreach[count=\i] \a/\b in {2/1,0/1,-1/1,-1/2} {%
      \node[white,block] at (\a,{\b*\chipscl*\chipcoef-0.5*\chipscl*\chipcoef}) { };
    }
    
    \AthreeArgs 2 1 0
\end{tikzpicture}
  };
\node[scale=\nodesclbig,below=\shift of top4] (bottom1) {
    \begin{tikzpicture}[scale=\chiptikzscl,block/.style={anchor=center,draw,circle, minimum width={width("11")+12pt},
font=\small,scale=\chipscl}]
    \foreach \x in {-2,-1,...,2} {%
      \node[white,anchor=north] (A\x) at (\x,0) {$\x$};
    }
    \draw[white] (-2.2,0) -- (2.2,0);
    \draw[white,dashed] (-3,0) -- (3,0);
    \foreach[count=\i] \a/\b in {2/1,0/1,0/2,-2/1} {%
      \node[white,block] at (\a,{\b*\chipscl*\chipcoef-0.5*\chipscl*\chipcoef}) { };
    }
    
    \AthreeArgs 2 0 2
\end{tikzpicture}
  };
\node[scale=\nodesclbig,below=\shift of bottom1] (bottom2) {
    \begin{tikzpicture}[scale=\chiptikzscl,block/.style={anchor=center,draw,circle, minimum width={width("11")+12pt},
font=\small,scale=\chipscl}]
    \foreach \x in {-2,-1,...,2} {%
      \node[white,anchor=north] (A\x) at (\x,0) {$\x$};
    }
    \draw[white] (-2.2,0) -- (2.2,0);
    \draw[white,dashed] (-3,0) -- (3,0);
    \foreach[count=\i] \a/\b in {2/1,1/1,-1/1,-2/1} {%
      \node[white,block] at (\a,{\b*\chipscl*\chipcoef-0.5*\chipscl*\chipcoef}) { };
    }
    
    \AthreeArgs 1 2 1
\end{tikzpicture}
  };

  \draw[->,dashed] (top1)--(top2);
  \draw[->,dashed] (top2)--(top3);
  \draw[->,dashed] (top3)--(left);
  \draw[->,dashed] (top3)--(right);
  \draw[->,dashed] (left)--(bottom1);
  \draw[->,dashed] (right)--(bottom1);
  \draw[->,dashed] (bottom1)--(bottom2);

\end{tikzpicture}
  \end{tabular}

  \caption{\label{fig:side_by_side} Applying classical chip-firing moves to four chips at the origin (left). Applying UCF moves to $0\in P$ for $\Phi$ of Type $A_3$ (right).}
\end{figure}

  It turns out that the UCF moves always ``commute,'' and define a binary relation that coincides with $\raPP$:

\def\fundweight{\omega}
\begin{thm}\label{thm:abelian}
  Let $\DD$ be a simply laced Dynkin diagram corresponding to the root system $\Phi$. For each assignment $\fund:[n]\to\mathbb{Z}_{\geq0}$ we define the corresponding dominant weight $\lambda(\fund):=\sum_{i=1}^{n}\fund(i)\fundweight_i$. Then:
  \begin{enumerate}[\normalfont(i)]
  \item\label{item:rapp_rafund} An assignment $\fund'$ is obtained from $\fund$ by a UCF move (i.e. $\fund\raFund\fund'$) if and only if we have $\W\lambda(\fund)\raPP\W\lambda(\fund')$.
    \item\label{item:ucf_commute} UCF moves always ``commute.'' More precisely, let $\compFund_1$ and $\compFund_2$ be two zero connected components of $\fund$, and let $\fund_1'$ (resp., $\fund_2'$) be the assignment obtained from~$\fund$ by a UCF move along $\compFund_1$ (resp., along $\compFund_2$). Then $\fund_1'$ has a zero connected component $\compFund_2'\supseteq\compFund_2$, $\fund_2'$ has a zero connected component $\compFund_1'\supseteq\compFund_1$, and applying a UCF move to $\fund_1'$ along $\compFund_2'$ produces the same result as applying a UCF move to~$\fund_2'$ along $\compFund_1'$. 
  \end{enumerate}
\end{thm}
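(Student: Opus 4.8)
The plan is to run everything through the order-preserving dictionary $\fund\mapsto\lambda(\fund)$ between assignments $\fund\colon[n]\to\Z_{\geq0}$ and dominant weights. Since $\fund(i)=\<\lambda(\fund),\alpha_i^\vee\>$, this map is a bijection onto the dominant weights, so an assignment is determined by its weight. The first observation I would record is that the zero connected components of $\fund$ are exactly the (connected) Dynkin diagrams of the irreducible components of the parabolic sub-root system $\Phi_{I^0_\lambda}$, where $\lambda=\lambda(\fund)$ and $I^0_\lambda=\{i\colon\fund(i)=0\}$; I will write $\theta_\compFund$ for the highest root of the irreducible $\Phi_\compFund$ attached to a zero component $\compFund$.

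For part~\eqref{item:rapp_rafund} I would first prove that a UCF move along $\compFund$ corresponds to adding $\theta_\compFund$, i.e.\ $\lambda(\fund')=\lambda(\fund)+\theta_\compFund$. This is a vertex-by-vertex comparison of $\<\theta_\compFund,\alpha_i^\vee\>$ with steps~\eqref{step:increase}--\eqref{step:decrease}: for $i\in\compFund$ the root $\alpha_i$ is simple in $\Phi_\compFund$, so Lemma~\ref{lem:affinehighest} applied to $\Phi_\compFund$ gives $\<\theta_\compFund,\alpha_i^\vee\>=c_i$, the number of edges $\{\extraRoot,i\}$ of $\widetilde{\compFund}$; for $i\notin\compFund$, the simply-laced hypothesis forces $\<\theta_\compFund,\alpha_i^\vee\>\in\{-1,0,1\}$, while $\theta_\compFund=\sum_{j\in\compFund}m_j\alpha_j$ with all $m_j>0$ and $\<\alpha_j,\alpha_i^\vee\>\le0$ forces it to be $\le0$, hence $-1$ exactly when $i$ is adjacent to $\compFund$ and $0$ otherwise. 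As $\theta_\compFund\in\Phi^+$ and $\<\lambda,\theta_\compFund^\vee\>=0$ (because $\lambda\perp\alpha_j$ for $j\in\compFund$), this realizes the central-firing move $\lambda\raPP\lambda+\theta_\compFund$, giving the ``only if'' direction. Conversely, if $\W\lambda(\fund)\raPP\W\lambda(\fund')$, then Proposition~\ref{prop:simply_laced_dominant} (via its proof) produces a dominant weight $\lambda(\fund)+\theta_\compFund\in\W\lambda(\fund')$ for some component $\compFund$; since a $W$-orbit contains a unique dominant weight and $\lambda(\fund')$ is dominant, $\lambda(\fund')=\lambda(\fund)+\theta_\compFund$, and by the computation just made $\fund'$ is the UCF move along $\compFund$.

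The crux is part~\eqref{item:ucf_commute}, and here the dictionary makes the argument short. Writing $\lambda=\lambda(\fund)$, $\mu_1=\lambda+\theta_{\compFund_1}$, $\mu_2=\lambda+\theta_{\compFund_2}$, I would first note that since $\compFund_1,\compFund_2$ are distinct components of the zero set they are non-adjacent, so for $j\in\compFund_2$ we get $\<\mu_1,\alpha_j^\vee\>=\fund(j)+\<\theta_{\compFund_1},\alpha_j^\vee\>=0$; thus $\compFund_2\subseteq I^0_{\mu_1}$ lies in a single zero component $\compFund_2'$ of $\fund_1'$, and symmetrically $\compFund_1\subseteq\compFund_1'$. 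The key point is that $\mu_1+\theta_{\compFund_2'}$ is the unique dominant weight of $\W(\mu_1+\theta_{\compFund_2})$: indeed $\theta_{\compFund_2}\in\Phi^+$ is orthogonal to $\mu_1$, so $\mu_1\raPP\mu_1+\theta_{\compFund_2}$, and since $\theta_{\compFund_2}$ is supported on $\compFund_2\subseteq\compFund_2'$ it lies in the irreducible component $\Phi_{\compFund_2'}$ of $\Phi_{I^0_{\mu_1}}$, so Proposition~\ref{prop:simply_laced_dominant} identifies the dominant representative as $\mu_1+\theta_{\compFund_2'}$. Hence firing $\fund_1'$ along $\compFund_2'$ yields the dominant weight of $\W(\lambda+\theta_{\compFund_1}+\theta_{\compFund_2})$; by the symmetric computation, firing $\fund_2'$ along $\compFund_1'$ yields the dominant weight of $\W(\lambda+\theta_{\compFund_2}+\theta_{\compFund_1})$, the same orbit, and injectivity of $\lambda(\cdot)$ gives equality of the two assignments.

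The main obstacle is entirely in part~\eqref{item:rapp_rafund}: getting the edge cases of the numerology right, in particular that $\<\theta_\compFund,\alpha_i^\vee\>$ is never $-2$ even when $i$ has several neighbors in $\compFund$ (this is exactly where simply-lacedness enters and, as Remark~\ref{rem:nonsimplylaced_central} warns, fails otherwise), together with matching ``support of $\theta_\compFund$'' to ``edges to the affine node.'' Once \eqref{item:rapp_rafund} and the orthogonality/containment bookkeeping are in place, the commutation in \eqref{item:ucf_commute} is forced: the ``simultaneous-firing'' weight $\lambda+\theta_{\compFund_1}+\theta_{\compFund_2}$ serves as a symmetric normal form for both orders, so I never have to analyze explicitly how the enlarged component $\compFund_2'$ grows out of $\compFund_2$ (which, as the $E_7$ example in Figure~\ref{fig:E7_unlabeled_confluence} shows, can be genuinely intricate).
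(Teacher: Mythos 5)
Your proof is correct and follows essentially the same route as the paper: part~\eqref{item:rapp_rafund} rests on the identical vertex-by-vertex computation of $\<\theta_\compFund,\alpha_i^\vee\>$ (Lemma~\ref{lem:affinehighest} for $i\in\compFund$, simply-lacedness forcing the value $-1$ on neighbors of $\compFund$), combined with the uniqueness of the dominant representative coming from the proof of Proposition~\ref{prop:simply_laced_dominant}. For part~\eqref{item:ucf_commute} the paper merely invokes commutation of the two orbit-level moves in the rank-$2$ subsystem (here $A_1\oplus A_1$, since $\theta_{\compFund_1}\perp\theta_{\compFund_2}$) and declares it an immediate corollary of~\eqref{item:rapp_rafund}; your normal-form argument via the weight $\lambda+\theta_{\compFund_1}+\theta_{\compFund_2}$, with Proposition~\ref{prop:simply_laced_dominant} identifying the dominant representative $\mu_1+\theta_{\compFund_2'}$, is precisely the detailed version of that step, so the two proofs coincide in substance.
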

\begin{proof}
  We start with~\eqref{item:rapp_rafund}. Recall from the proof of Proposition~\ref{prop:simply_laced_dominant} that if $\lambda$ is dominant then for any root $\beta\in\Phi$ such that $\<\lambda,\beta^\vee\>=0$, there exists a root which we denote~$\theta'$ such that $\lambda+\theta'$ is dominant and $\W(\lambda+\beta)=\W(\lambda+\theta')$. Moreover, it is easy to see again from the proof of Proposition~\ref{prop:simply_laced_dominant} that such a root $\theta'$ is unique: it is the highest root of the irreducible sub-root system $\Phi'$ of $\Phi_{I^{0}_{\lambda}}$ containing $\beta$. Now let $\fund$ be such that~$\lambda=\sum_{i=1}^{n}\fund(i)\fundweight_i$. It is a simple, well-known fact that for every $\beta\in\Phi^+$ given by~$\beta=\sum_{i=1}^{n}b_i\alpha_i$, the graph $\DD[\beta]:=\DD[\{i\in [n]\colon b_i\neq 0\}]$ is connected. Thus we have that~$\<\lambda,\beta\>=0$ if and only if $\DD[\beta]$ is contained in a zero connected component $\compFund$ of $\fund$. It is then easy to see that the simple roots of $\Phi'$ are precisely $\{\alpha_i\colon i\in \compFund\}$. It remains to note that the highest root of $\Phi'$, written in the coordinates of the fundamental weights, is exactly given by steps~\eqref{step:increase} and~\eqref{step:decrease} of Definition~\ref{dfn:UCF}. In other words, we have
  \[\<\theta',\alpha^\vee_i\>=
    \begin{cases}
      -1, &\text{if $i\notin\compFund$ is connected to a vertex $j\in\compFund$;}\\
      d_{i,0},&\text{if $i\in\compFund$ and there are $d_{i,0}$ edges of $\compFundaff$ between $i$ and $0$;}\\ 
      0,&\text{otherwise.}
    \end{cases}\]
That $\<\theta',\alpha^\vee_i\>= d_{i,0}$ if $i\in\compFund$ follows from Lemma~\ref{lem:affinehighest} above. If~$i\notin\compFund$ is connected to a vertex $j\in\compFund$, then, writing $\theta'=\sum_{l=1}^{n}c_l\alpha_l$, we will have $c_j > 0$ since $\theta'$ is the highest weight of $\Phi'$; meanwhile, clearly $c_i=0$; hence, $\<\theta',\alpha^\vee_i\> < 0$; but since $\Phi$ is simply laced this means that $\<\theta',\alpha^\vee_i\> =-1$. That $\<\theta',\alpha^\vee_i\>= 0$ if $i\notin\compFund$ is not connected to any vertex in $\compFund$ is clear. This finishes the proof of~\eqref{item:rapp_rafund}. 

To show~\eqref{item:ucf_commute}, note that the moves $\W\lambda \raPP \W\mu_1$ and $\W\lambda \raPP\W\mu_2$ ``commute'' for any rank $2$ simply laced root system: in $A_2$, there is only one class of roots modulo the Weyl group, and in $A_1\oplus A_1$ there are two classes but the two possible moves do indeed ``commute.'' Thus part~\eqref{item:ucf_commute} follows from part~\eqref{item:rapp_rafund} as an immediate corollary.
\end{proof}

\begin{remark}
To extend this Dynkin diagram number game for unlabeled central-firing beyond the simply laced setting, there are two obstacles that need to be overcome. The first is that in general we may have both a highest root $\theta'$ and highest short root $\widehat{\theta}'$ for the parabolic sub-root system corresponding to a zero connected component of our weight, and adding $\theta'$ and $\widehat{\theta}'$ will lead to different weights. This is not such a serious obstacle: we can just allow these two different kinds of moves. The second, more serious, obstacle is that, as mentioned in Remark~\ref{rem:nonsimplylaced_central}, not every unlabeled central-firing move corresponds to a move that stays in the dominant chamber: thus, sometimes adding~$\theta'$ or~$\widehat{\theta}'$ will make some coordinates of our weight negative. To overcome this, we could reflect our weight back into the dominant chamber by playing what is called \emph{Mozes's number game} (see~\cite{mozes1990reflection} or~\cite{eriksson1996strong}) on our Dynkin diagram. But this second obstacle makes the description of the unlabeled chip-firing game much more convoluted than in the simply laced case.
\end{remark}

\section{Span of central-firing and connectedness}\label{sec:span-central-firing}

In this section, we try to reduce the study of confluence of central-firing to those weights from which the possible firing sequences ``span'' the whole vector space $V$.

\subsection{The firing span}

Recall that $\raPoint$ denotes central-firing of all of the roots of $\Phi$.

\begin{definition}
  Let $\l$ be a weight. We define the \emph{firing span} of $\Phi^+$ (resp., of $\Phi$) at $\l$ to be $\SpanPP(\l) := \Span\{\lambda-\mu\colon\lambda \raPPAst \mu\}$ (resp., $\SpanPoint(v) := \Span\{\lambda-\mu\colon\lambda \raPointAst \mu\}$).
\end{definition}

Here are some elementary properties of $\SpanPP$ and $\SpanPoint$.

\begin{prop}\label{prop:span_w_inv}
  Let $\lambda \in P$. Then:
  \begin{enumerate}[\normalfont(1)]
  \item\label{item:span_point} We have $\SpanPP(\lambda) = \SpanPoint(\lambda)$.
    \item\label{item:span_w} For any $w \in W$, we have $\SpanPP(w\lambda) = w\SpanPP(\lambda)$.
  \end{enumerate}
\end{prop}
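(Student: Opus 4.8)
The plan is to establish part~\eqref{item:span_point} first, as part~\eqref{item:span_w} follows from it with almost no extra work. The key structural point is that, unlike $\raPP$, the relation $\raPoint$ is $W$-equivariant: if $\lambda\raPoint\mu$ with $\mu=\lambda+\gamma$, then for any $w\in W$ we have $w\mu=w\lambda+w\gamma$ with $w\gamma\in\Phi$ and $\<w\lambda,(w\gamma)^\vee\>=\<\lambda,\gamma^\vee\>=0$, so $w\lambda\raPoint w\mu$; the converse is identical. Hence $\lambda\raPointAst\mu$ if and only if $w\lambda\raPointAst w\mu$, and since $w$ acts linearly this at once gives $\SpanPoint(w\lambda)=w\,\SpanPoint(\lambda)$. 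Therefore, once part~\eqref{item:span_point} is known, part~\eqref{item:span_w} follows by writing $\SpanPP(w\lambda)=\SpanPoint(w\lambda)=w\,\SpanPoint(\lambda)=w\,\SpanPP(\lambda)$. So the whole content lies in part~\eqref{item:span_point}, and the reason we introduced $\raPoint$ is precisely that it repairs the lack of $W$-equivariance of $\raPP$.

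For part~\eqref{item:span_point}, the inclusion $\SpanPP(\lambda)\subseteq\SpanPoint(\lambda)$ is immediate since $\raPP$ is a sub-relation of $\raPoint$. The substance is the reverse inclusion, which I will obtain by proving that $\mu-\lambda\in\SpanPP(\lambda)$ whenever $\lambda\raPointAst\mu$. I will use three elementary ingredients. First, a \emph{single-step lemma}: any root $\gamma$ with $\<\lambda,\gamma^\vee\>=0$ lies in $\SpanPP(\lambda)$, since if $\gamma\in\Phi^+$ then $\lambda\raPP\lambda+\gamma$ while if $\gamma\in\Phi^-$ then $\lambda\raPP\lambda-\gamma$, and in either case $\gamma$ is (up to sign) a generator of $\SpanPP(\lambda)$. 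Second, \emph{monotonicity}: if $\lambda\raPPAst\nu$ then $\SpanPP(\nu)\subseteq\SpanPP(\lambda)$, which follows by telescoping, since for any $\nu\raPPAst\mu$ both $\lambda-\mu$ and $\lambda-\nu$ lie in $\SpanPP(\lambda)$, and hence so does their difference $\nu-\mu$. Third, the $W$-equivariance of $\raPoint$ noted above, which I will apply through the reflections $s_\delta$.

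The argument then proceeds by induction on the length of a $\raPoint$-path $\lambda\raPoint\nu_1\raPointAst\mu$. If the first step is \emph{positive}, i.e.\ $\lambda\raPP\nu_1$, then monotonicity gives $\SpanPP(\nu_1)\subseteq\SpanPP(\lambda)$, the inductive hypothesis gives $\mu-\nu_1\in\SpanPP(\nu_1)$, and $\nu_1-\lambda\in\SpanPP(\lambda)$ by the single-step lemma, so $\mu-\lambda\in\SpanPP(\lambda)$. The remaining case is the main obstacle: a \emph{negative} first step $\nu_1=\lambda-\delta$ with $\delta\in\Phi^+$ and $\<\lambda,\delta^\vee\>=0$, for then $\nu_1$ need not be $\raPP$-reachable from $\lambda$ and monotonicity no longer applies directly. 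To handle it I reflect the whole path by $s_\delta$: since $\<\lambda,\delta^\vee\>=0$ we have $s_\delta\lambda=\lambda$ and $s_\delta\nu_1=\lambda+\delta$, so $W$-equivariance of $\raPoint$ turns the path into $\lambda\raPoint(\lambda+\delta)\raPointAst s_\delta\mu$, of the same length but now with a positive first step. The already-treated positive case yields $s_\delta\mu-\lambda\in\SpanPP(\lambda)$, and because $\mu-s_\delta\mu=\<\mu,\delta^\vee\>\delta$ is a multiple of $\delta\in\SpanPP(\lambda)$, I conclude $\mu-\lambda=(s_\delta\mu-\lambda)+\<\mu,\delta^\vee\>\delta\in\SpanPP(\lambda)$. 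This closes the induction, gives $\SpanPoint(\lambda)\subseteq\SpanPP(\lambda)$, and completes part~\eqref{item:span_point}.
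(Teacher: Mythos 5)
Your proof is correct, and its engine is the same as the paper's: the $W$-equivariance of $\raPoint$, exploited through the reflection $s_\delta$, which fixes $\lambda$ when $\<\lambda,\delta^\vee\>=0$, converts the bad step $\lambda\raPoint\lambda-\delta$ into the good step $\lambda\raPP\lambda+\delta$, and displaces any point only by a multiple of $\delta$, a vector already known to lie in $\SpanPP(\lambda)$. Where you genuinely diverge is the proof architecture. The paper first records the recurrence $\SpanPP(\lambda)=\Span\bigcup_{\alpha}\bigl(\{\alpha\}\cup\SpanPP(\lambda+\alpha)\bigr)$, the union being over $\alpha\in\Phi^+$ with $\<\lambda,\alpha^\vee\>=0$ (with the analogous recurrence for $\SpanPoint$), and then inducts on $\fl(\lambda)$, the length of the longest central-firing sequence starting at $\lambda$; for $\fl(\lambda)$ to be finite this tacitly invokes the termination of $\raPP$ (Proposition~\ref{prop:centraltermination}). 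You instead induct on the length of an individual $\raPoint$-path and replace the recurrence by two explicit elementary lemmas, the single-step lemma and the monotonicity of $\SpanPP$ along $\raPP$-steps, both of which are correct as stated; your two-case analysis is also well-founded, since the reduction of a negative first step to a positive first step preserves the path length, and the positive case only consumes the inductive hypothesis one step shorter. What your route buys is independence from termination: every generator $\lambda-\mu$ of $\SpanPoint(\lambda)$ carries its own finite path, so your argument would survive even if infinite firing sequences existed. What the paper's route buys is compactness and reusability: the same recurrence it sets up here is recycled later in the proof of Proposition~\ref{prop:span_simp}.
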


\begin{proof}
We will use the following recurrence relation for $\SpanPP(\lambda)$ and $\SpanPoint(\lambda)$:
  \begin{equation*}\label{eq:recurrence_span}
  \SpanPP(\lambda)=\Span \hspace{-0.25cm} \bigcup_{\substack{\alpha\in\Phi^+\\ \<\lambda,\alpha^\vee\>=0}}  \hspace{-0.25cm} \{\alpha\}\cup\SpanPP(\lambda+\alpha); \quad \SpanPoint(\lambda)=\Span  \hspace{-0.25cm} \bigcup_{\substack{\alpha\in\Phi\\ \<\lambda,\alpha^\vee\>=0}}  \hspace{-0.25cm} \{\alpha\} \cup\SpanPoint(\lambda+\alpha).
  \end{equation*}
  We prove~\eqref{item:span_point} by induction on the length $\fl(\lambda)$ of the longest central-firing sequence starting at~$\lambda$ (it will be clear from our argument that $\fl(w\lambda)=\fl(\lambda)$ for all $w\in W$ and~$\lambda\in P$). The case $\fl(\lambda)=0$ is trivial so suppose that there exists $\alpha\in\Phi^+$ orthogonal to~$\lambda$. To show that~\eqref{item:span_point} holds for $\lambda$, it suffices to show that
 \[\Span(\{\alpha\}\cup\SpanPP(\lambda+\alpha))=\Span(\{\alpha\}\cup\SpanPoint(\lambda+\alpha))=\Span(\{-\alpha\}\cup\SpanPoint(\lambda-\alpha)).\]
  The first equality holds trivially by induction. Note that $\raPoint$ is clearly $W$-invariant; hence we have~$\SpanPoint(w\lambda) = w\SpanPoint(\lambda)$ for all~$\lambda \in P$ (thus~\eqref{item:span_w} in fact follows from~\eqref{item:span_point}). Using this, we get
  \[s_\alpha\SpanPoint(\lambda+\alpha)=\SpanPoint(\lambda-\alpha),\]
  while on the other hand,
  \[s_{\alpha}\SpanPoint(\lambda+\alpha)=\{v-\<v,\alpha^\vee\>\alpha\colon v\in\SpanPoint(\lambda+\alpha)\}
    \subseteq\Span (\{\alpha\}\cup\SpanPoint(\lambda+\alpha)).\]
This shows the second equality and thus finishes the inductive step for part~\eqref{item:span_point}; as we have already noted, part~\eqref{item:span_w} follows from part~\eqref{item:span_point}.
\end{proof}

The following definition was the main reason for introducing the firing span.

\begin{definition}
We say that a weight $\lambda \in P$ is \emph{connected} if $\SpanPP(\lambda)=V$.
\end{definition}

The term ``connected'' comes from the interpretation of this notion in terms of chips: in Type $A_{N-1}$, given a configuration of chips, consider a simple undirected graph $G$ with vertex set $[N]$ and edge set containing $\{i,j\}$ whenever the \chip i and \chip j can fire together in some labeled chip-firing sequence starting from this configuration. Then such a configuration of chips corresponds to a connected weight if and only if the above graph $G$ is connected.

If $\lambda \in P$ is not connected, then we can understand central-firing from $\lambda$ by projecting to $\SpanPP(\lambda)$. Hence, in some sense, we can reduce Question~\ref{question:centralconf} to the case where $\lambda$ is connected. Of course, in order to carry out this reduction, we need to be able to decide when $\lambda$ is connected and efficiently compute $\SpanPP(\lambda)$ when it is not connected. We do not know how to do this for general~$\Phi$. But the main result of this section is a classification of connected weights when $\Phi$ is of Type A (and this classification in fact leads to an efficient way to compute $\SpanPP(\lambda)$ for all $\lambda\in P$, as we describe in the following subsection).

By Proposition~\ref{prop:span_w_inv}, the set of connected weights is some $W$-invariant set. It would be nice if it were, say, the weights inside some permutohedron. We now work towards proving that, in Type A at least, this is the case.

\begin{prop}\label{prop:span_pi_2_rho}
Let $\lambda \in P$. If $\lambda \notin \Pi(2\rho)$, then $\lambda$ is not connected.
\end{prop}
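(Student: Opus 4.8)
The plan is to exploit $W$-invariance to reduce to a dominant weight, and then to produce a nonzero linear functional that is preserved by every central-firing move reachable from $\lambda$. Since $\SpanPP(\lambda)^\perp$ is exactly the space of functionals vanishing on all fired roots, exhibiting one such nonzero functional proves $\SpanPP(\lambda)\ne V$, i.e.\ that $\lambda$ is not connected.

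First I would reduce to the case that $\lambda$ is dominant. Both hypothesis and conclusion are $W$-invariant: the set $\Pi(2\rho)$ is $W$-stable, and by Proposition~\ref{prop:span_w_inv}\eqref{item:span_w} we have $\SpanPP(w\lambda)=w\,\SpanPP(\lambda)$, so $\lambda$ is connected if and only if $w\lambda$ is. Replacing $\lambda$ by its dominant representative, I may assume $\lambda$ is dominant. For a dominant weight, membership in the permutohedron has a clean description: writing $\omega_i^\vee$ for the fundamental coweights (the basis dual to the simple roots, $\<\alpha_j,\omega_i^\vee\>=\delta_{i,j}$), a dominant weight lies in $\Pi(2\rho)$ if and only if $2\rho-\lambda$ is a nonnegative \emph{real} combination of the simple roots, i.e.\ $\<2\rho-\lambda,\omega_i^\vee\>\ge 0$ for all $i$ (the real-coefficient analogue of Lemma~\ref{lemma:permcontainment}). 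Hence $\lambda\notin\Pi(2\rho)$ yields an index $i$ with $\<\lambda,\omega_i^\vee\> > \<2\rho,\omega_i^\vee\>$.

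The key elementary observation is a monotonicity. Along any sequence $\lambda\raPPAst\mu$, the quantity $\<\mu,\omega_i^\vee\>$ is nondecreasing: a single move adds a positive root $\alpha=\sum_j b_j\alpha_j$ to the current weight, and $\<\alpha,\omega_i^\vee\>=b_i\ge 0$ because all coefficients of a positive root are nonnegative. Consequently every $\mu$ reachable from $\lambda$ satisfies $\<\mu,\omega_i^\vee\>\ge\<\lambda,\omega_i^\vee\> > \<2\rho,\omega_i^\vee\> = \max_{v\in\Pi(2\rho)}\<v,\omega_i^\vee\>$, the last equality holding because $\omega_i^\vee$ is a dominant coweight and $2\rho$ a dominant weight. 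Thus the whole firing trajectory stays strictly on the far side of the supporting hyperplane $\{\,v:\<v,\omega_i^\vee\>=\<2\rho,\omega_i^\vee\>\,\}$ of $\Pi(2\rho)$.

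The crux, and what I expect to be the main obstacle, is to upgrade this to the statement that the direction $\omega_i^\vee$ is actually \emph{frozen}: no positive root $\alpha$ with $b_i>0$ is ever fired from a reachable $\mu$. Granting this, every fired root lies in $\Span\{\alpha_j:j\ne i\}$, whence $\SpanPP(\lambda)\subseteq\Span\{\alpha_j:j\ne i\}\subsetneq V$ and $\lambda$ is not connected. To establish freezing I would combine the monotonicity above with the trapping Lemma~\ref{lem:permtrap} and the termination potential $\phi(\mu)=\<2\rho-\mu,2\rho-\mu\>$ of Proposition~\ref{prop:centraltermination}: firing a root $\alpha$ with $b_i>0$ requires $\<\mu,\alpha^\vee\>=0$, while $\<\mu,\omega_i^\vee\>$ is forced to be large; the plan is to show these are incompatible, using the identity $2\rho=\sum_{\alpha\in\Phi^+}\alpha$ to tie the threshold $\<2\rho,\omega_i^\vee\>$ to exactly how far a point orthogonal to such an $\alpha$ can raise the $\omega_i^\vee$-coordinate while remaining in the ball $\{\phi\le\phi(\lambda)\}$. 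Care is needed here, because simple examples show that not every index $i$ with $\<\lambda,\omega_i^\vee\> > \<2\rho,\omega_i^\vee\>$ is frozen; one must select $i$ carefully (for instance so that the excess $\<\lambda-2\rho,\omega_i^\vee\>$ is maximal) and check that this choice persists along the entire trajectory.
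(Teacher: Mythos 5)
Your reduction to dominant $\lambda$ (via Proposition~\ref{prop:span_w_inv}), the coweight criterion for membership of a dominant weight in $\Pi(2\rho)$, and the monotonicity of the coordinates $\<\mu,\omega_j^\vee\>$ along firing sequences are all correct, and your overall strategy --- exhibit an index $i$ such that no root fired from any weight reachable from $\lambda$ has positive $\alpha_i$-coefficient --- is indeed the mechanism behind the paper's Type~A classification (Proposition~\ref{prop:span_A}). But your proof has a genuine gap exactly where you flag it: the ``freezing'' claim is the entire content of the proposition, and none of the ingredients you propose can be expected to deliver it. First, Lemma~\ref{lem:permtrap} is of no use here: it says that trajectories which start inside a permutohedron $\Pi^Q(\rho+\mu'')$ stay inside, whereas your trajectory lives outside $\Pi(2\rho)$. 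Second, the ``incompatibility'' you hope for cannot be a pointwise statement about a single weight $\mu$: in $A_2$ the weight $\mu=M(\alpha_1-\alpha_2)$ is orthogonal to $\theta=\alpha_1+\alpha_2$ (whose $\alpha_1$-coefficient is positive) while $\<\mu,\omega_1^\vee\>=M$ is arbitrarily large, so any proof of freezing must genuinely use which weights are \emph{reachable} --- i.e., must exploit the firing history and not just the invariants $\phi$ and coordinatewise monotonicity --- and that is precisely the hard part. Third, even the inductive bookkeeping is problematic: when you fire a root $\alpha=\sum_j b_j\alpha_j$ with $b_i=0$, the excesses $\<\mu-2\rho,\omega_j^\vee\>$ at the indices $j$ in the support of $\alpha$ increase by $b_j\geq 1$, so your invariant ``the excess at $i$ is maximal'' is not automatically preserved. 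In the paper's Type~A argument this persistence is rescued by the strict convexity of $f_\lambda-f_\rho$ on the support of any firable root together with integrality --- an argument that leans on the interval structure of Type~A roots and on $\rho$ (not $2\rho$); you supply no analogue for general $\Phi$, and it is not clear that the max-excess choice with respect to $2\rho$ even works in all types.

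It is also worth knowing that the paper does not prove this proposition from scratch either: its proof is a one-line reduction to~\cite[Theorem~9.1]{galashin2017rootfiring1}, which is essentially the general-type ``frozen direction'' statement you are missing. So what is absent from your write-up is not a routine verification left to the reader; it is the substantive theorem on which the proposition rests.
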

\begin{proof}
This follows from~\cite[\permcc]{galashin2017rootfiring1} in a straightforward way.
\end{proof}

To continue the analysis of connected weights, we now restrict our attention to simply laced root systems. 

\begin{prop} \label{prop:span_simp}
Suppose that $\Phi$ is simply laced. Let $\lambda \in P$ be dominant. Then 
\[\SpanPP(\lambda)=\Span \{\alpha_i\colon \mu\raPP \mu+\alpha_i \textrm{ for some $i\in [n]$ and $\mu \in P$ such that } \lambda \raPPAst \mu\}.\]
\end{prop}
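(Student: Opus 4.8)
The plan is to prove the two inclusions separately, with the nontrivial one by induction on the length $\fl(\lambda)$ of the longest central-firing sequence from $\lambda$ (the quantity already used in the proof of Proposition~\ref{prop:span_w_inv}). Throughout, write $S(\lambda)$ for the right-hand side, i.e.\ $S(\lambda):=\Span\{\alpha_i : \mu\raPP\mu+\alpha_i \text{ for some } i\in[n] \text{ and } \mu\in P \text{ with } \lambda\raPPAst\mu\}$. First I would record the reformulation $\SpanPP(\lambda)=\Span\{\beta\in\Phi^+ : \mu\raPP\mu+\beta \text{ for some } \mu \text{ with } \lambda\raPPAst\mu\}$, which follows by telescoping a firing sequence (each step adds a fired positive root) together with the identity $\beta=(\mu+\beta)-\mu$, exactly as in the proof of Proposition~\ref{prop:span_w_inv}. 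With this description the inclusion $S(\lambda)\subseteq\SpanPP(\lambda)$ is immediate, since a fired simple root is in particular a fired positive root.

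For the reverse inclusion I would first establish a structural fact: $S(\lambda)$ is invariant under the parabolic subgroup $W_{I_\lambda^0}$. The point is that every simple root $\alpha_i$ with $i\in I_\lambda^0$ is fireable directly from $\lambda$ (as $\<\lambda,\alpha_i^\vee\>=0$), hence $\alpha_i\in S(\lambda)$; consequently, for each such $i$ and each generator $\alpha_j$ of $S(\lambda)$, we have $s_{\alpha_i}(\alpha_j)=\alpha_j-\<\alpha_j,\alpha_i^\vee\>\alpha_i\in S(\lambda)$, so each generating reflection of $W_{I_\lambda^0}$ preserves $S(\lambda)$.

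Now the inductive step, for dominant $\lambda$. Using the recurrence for $\SpanPP$ from the proof of Proposition~\ref{prop:span_w_inv}, it suffices to show (a) every positive root $\beta$ fireable from $\lambda$ lies in $S(\lambda)$, and (b) $\SpanPP(\lambda+\beta)\subseteq S(\lambda)$ for each such $\beta$. For (a): since $\lambda$ is dominant, $\<\lambda,\alpha_i\>\ge 0$ for all $i$, so $\<\lambda,\beta^\vee\>=0$ forces $\operatorname{supp}(\beta)\subseteq I_\lambda^0$, whence $\beta\in\Span\{\alpha_i : i\in I_\lambda^0\}\subseteq S(\lambda)$. For (b): by Proposition~\ref{prop:simply_laced_dominant} (and its proof) the orbit $\W(\lambda+\beta)$ has the dominant representative $\lambda+\theta'$, where $\theta'$ is the highest root of the irreducible component $\Phi'\subseteq\Phi_{I_\lambda^0}$ containing $\beta$, and $\lambda+\beta=w^{-1}(\lambda+\theta')$ for some $w\in W'\subseteq W_{I_\lambda^0}$ fixing $\lambda$. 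By the $W$-equivariance in Proposition~\ref{prop:span_w_inv}\eqref{item:span_w}, $\SpanPP(\lambda+\beta)=w^{-1}\SpanPP(\lambda+\theta')$. Since $\lambda\raPP\lambda+\theta'$ is a single step, $\fl(\lambda+\theta')<\fl(\lambda)$, so the inductive hypothesis gives $\SpanPP(\lambda+\theta')=S(\lambda+\theta')$; moreover $S(\lambda+\theta')\subseteq S(\lambda)$ because every weight reachable from $\lambda+\theta'$ is reachable from $\lambda$. Applying the $W_{I_\lambda^0}$-invariance of $S(\lambda)$ then yields $\SpanPP(\lambda+\beta)=w^{-1}\SpanPP(\lambda+\theta')\subseteq w^{-1}S(\lambda)=S(\lambda)$, completing (b) and hence the induction. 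The base case $\fl(\lambda)=0$ is the strictly dominant case, where both sides equal $\{0\}$.

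The main obstacle is precisely step (b): an intermediate weight $\lambda+\beta$ produced by firing is generally not dominant, and a positive root orthogonal to a non-dominant weight need not be supported on simple roots orthogonal to it, so the direct argument of (a) breaks down. The device that resolves this is to pass to the dominant representative $\lambda+\theta'$ via Proposition~\ref{prop:simply_laced_dominant}, at the cost of a Weyl twist $w\in W_{I_\lambda^0}$, and then to absorb that twist using the $W_{I_\lambda^0}$-invariance of $S(\lambda)$ established beforehand.
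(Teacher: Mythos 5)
Your proof is correct and is essentially the paper's own argument: the same induction on $\fl(\lambda)$, the same recurrence for $\SpanPP$, and the same use of Proposition~\ref{prop:simply_laced_dominant} plus the $W$-equivariance of Proposition~\ref{prop:span_w_inv} to replace each fired $\lambda+\beta$ by its dominant representative $\lambda+\theta'$ at the cost of a twist $w\in W_{I^0_\lambda}$. The only (cosmetic) difference is where the twist is absorbed: you absorb $w^{-1}$ using the $W_{I^0_\lambda}$-invariance of the right-hand side after invoking induction, whereas the paper absorbs $w$ into $\Span$ of the fireable roots at $\lambda$ on the $\SpanPP$ side before invoking induction.
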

\begin{proof}
We prove this by induction on the length $\fl(\lambda)$ of the longest central-firing sequence starting at~$\lambda$. Recall that
\[ \SpanPP(\lambda)=\Span \hspace{-0.25cm} \bigcup_{\substack{\alpha\in\Phi^+\\ \<\lambda,\alpha^\vee\>=0}}  \hspace{-0.25cm} \{\alpha\}\cup\SpanPP(\lambda+\alpha).\]
Now, we know from Proposition~\ref{prop:simply_laced_dominant} that for any $\alpha \in \Phi^+$ with $\<\lambda,\alpha^\vee\>=0$, we have $\lambda \raPP \lambda + \theta'$ where $\theta' \in \Phi^+$, $\lambda+\theta'$ is dominant, and $w(\lambda + \theta') = \lambda+\alpha$ for some $w \in W_{I^{0}_{\lambda}}$. But we have $\SpanPP(w(\lambda+\theta')) = w\SpanPP(\lambda+\theta')$ thanks to Proposition~\ref{prop:span_w_inv}. And note that since $w \in W_{I^0_{\lambda}}$, in fact we have
\[w\SpanPP(\lambda+\theta') \subseteq \Span \hspace{-0.25cm} \bigcup_{\substack{\alpha\in\Phi^+\\ \<\lambda,\alpha^\vee\>=0}}  \hspace{-0.25cm} \{\alpha\} \cup  \SpanPP(\lambda+\theta').\]
Since $\lambda$ is dominant, $\Span\{\alpha \in \Phi^+\colon \<\lambda,\alpha^\vee\>=0\} = \Span\{\alpha_i \in \Delta\colon \<\lambda,\alpha_i^\vee\>=0\}$. Altogether this shows that
\[ \SpanPP(\lambda)=\Span \hspace{-0.25cm} \bigcup_{\substack{\alpha_i\in\Delta\\ \<\lambda,\alpha_i^\vee\>=0}}  \hspace{-0.25cm} \{\alpha_i\}\cup  \hspace{-0.4cm}\bigcup_{\substack{\theta' \in \Phi^+ \\ \<\lambda,(\theta')^\vee\>=0\\ \lambda+\theta' \textrm{ is dominant}}} \hspace{-0.4cm}\SpanPP(\lambda+\theta').\]
By induction the result holds for all these $\lambda+\theta'$, so we are done.
\end{proof}

For $\lambda \in P$, let us use $\Pi^{\circ}(\lambda)$ to denote the \emph{interior} of the permutohedron $\Pi(\lambda)$, i.e., $\Pi^{\circ}(\lambda) := \Pi(\lambda)-\partial\Pi(\lambda)$ where $\partial\Pi(\lambda)$ is the (topological) boundary of $\Pi(\lambda)$. Then let us also use~$\Pi^{\circ,Q}(\lambda) := \Pi^{\circ}(\lambda)\cap (Q+\lambda)$.

\begin{prop}\label{prop:span_pi_rho}
Suppose that $\Phi$ is simply laced. Let $\lambda \in P$. If $\lambda \in \Pi^{\circ,Q}(\rho+\omega)$ for some $\omega \in \Omega_m^{0}$, then $\lambda$ is connected.
\end{prop}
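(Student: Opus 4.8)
The plan is to reduce to the case of dominant $\lambda$ and then exhibit, for every node $k\in[n]$, a weight reachable from $\lambda$ at which the simple root $\alpha_k$ can be fired; by Proposition~\ref{prop:span_simp} this forces $\SpanPP(\lambda)=V$. Since connectedness is $W$-invariant (Proposition~\ref{prop:span_w_inv}) and $\Pi^{\circ,Q}(\rho+\omega)$ is a $W$-invariant set, I may replace $\lambda$ by the dominant weight in $\W\lambda$ and assume $\lambda$ is dominant. Write $\nu:=\rho+\omega$, which is strictly dominant. First I would record the combinatorial meaning of the interior hypothesis. Since $\lambda$ and $\nu$ are both dominant and $\lambda\in\Pi^Q(\nu)$, Lemma~\ref{lemma:permcontainment} gives $\nu-\lambda=\sum_{k=1}^n c_k\alpha_k$ with $c_k\in\Z_{\geq0}$, and I claim all $c_k>0$. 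Indeed, if $c_m=0$, then letting $\omega_m^\vee$ be the coweight dual to the simple roots (so $\<\alpha_k,\omega_m^\vee\>=\delta_{k,m}$) we get $\<\lambda,\omega_m^\vee\>=\<\nu,\omega_m^\vee\>$; as $\omega_m^\vee$ and $\nu$ are both dominant, $\nu$ maximizes $\<\cdot,\omega_m^\vee\>$ over $\W\nu$, so $\Pi(\nu)$ lies in the half-space $\<\cdot,\omega_m^\vee\>\leq\<\nu,\omega_m^\vee\>$ and $\lambda$ lies on its bounding hyperplane, i.e.\ $\lambda\in\partial\Pi(\nu)$, contradicting $\lambda\in\Pi^\circ(\nu)$.

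Next I would build an explicit dominant firing path from $\lambda$ to $\nu$. By Proposition~\ref{prop:unlabeled_stabilization} we have $\W\lambda\raPPAst\W(\rho+\omega)$, and repeatedly applying Proposition~\ref{prop:simply_laced_dominant} lets me lift this to a sequence of dominant weights $\lambda=\mu_0\raPP\mu_1\raPP\cdots\raPP\mu_T$ that stays dominant at every step; since $\raPP$ is terminating and the only dominant weight in $\W(\rho+\omega)$ is $\rho+\omega$ itself, the path ends at $\mu_T=\nu$. As in the proofs of Proposition~\ref{prop:simply_laced_dominant} and Theorem~\ref{thm:abelian}, each step is a UCF move $\mu_{t+1}=\mu_t+\theta'_t$, where $\theta'_t$ is the highest root of the irreducible parabolic sub-root system of $\Phi_{I^0_{\mu_t}}$ whose simple roots are $\{\alpha_i\colon i\in R_t\}$ for a zero connected component $R_t$ of $\mu_t$; in particular $\<\mu_t,\alpha_i^\vee\>=0$ for every $i\in R_t$.

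Finally I would telescope and invoke the full-support property of highest roots. Summing the increments yields $\sum_{t=0}^{T-1}\theta'_t=\mu_T-\mu_0=\nu-\lambda=\sum_k c_k\alpha_k$. Since $\Phi$ is simply laced and $\theta'_t$ is the highest root of the irreducible system on $\{\alpha_i\colon i\in R_t\}$, writing $\theta'_t=\sum_k(\theta'_t)_k\alpha_k$ we have $(\theta'_t)_k>0$ for every $i=k\in R_t$ and $(\theta'_t)_k=0$ otherwise, so $\operatorname{supp}(\theta'_t)=R_t$. Comparing $\alpha_k$-coefficients gives $c_k=\sum_t(\theta'_t)_k$, and because $c_k>0$ there is some $t$ with $k\in R_t$; hence $\bigcup_t R_t=[n]$. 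For each $k$ pick such a $t$: then $\<\mu_t,\alpha_k^\vee\>=0$ with $\lambda\raPPAst\mu_t$, so $\mu_t\raPP\mu_t+\alpha_k$ is a legal firing move and $\alpha_k$ lies in the spanning set of Proposition~\ref{prop:span_simp}. Therefore $\SpanPP(\lambda)\supseteq\Span\{\alpha_k\colon k\in[n]\}=V$, and transporting back by $W$ via Proposition~\ref{prop:span_w_inv} shows the original $\lambda$ is connected.

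I expect the main obstacle to be the translation of the topological hypothesis $\lambda\in\Pi^\circ(\rho+\omega)$ into the purely combinatorial statement that every simple-root coefficient of $\nu-\lambda$ is strictly positive, and then coupling it with the fact that the highest root of an irreducible sub-system has full support; the telescoping identity $\sum_t\theta'_t=\nu-\lambda$ is precisely what links these two facts. Some care is needed to guarantee that the dominant lift of the firing path terminates at $\rho+\omega$ on the nose (not merely somewhere in its $W$-orbit), so that the increments really do sum to $\nu-\lambda$.
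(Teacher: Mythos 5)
Your proof is correct and takes essentially the same route as the paper's: reduce to dominant $\lambda$ via $W$-invariance, translate the interior hypothesis into strict positivity of every simple-root coefficient of $(\rho+\omega)-\lambda$, lift the unlabeled stabilization to a central-firing sequence through dominant weights ending at $\rho+\omega$ (Propositions~\ref{prop:unlabeled_stabilization} and~\ref{prop:simply_laced_dominant}), and conclude with Proposition~\ref{prop:span_simp}. The only divergence is the final step, and it is cosmetic: the paper simply notes that $(\rho+\omega)-\lambda\in\SpanPP(\lambda)$ and that Proposition~\ref{prop:span_simp} exhibits $\SpanPP(\lambda)$ as the span of a subset of the linearly independent simple roots, so full support forces that subset to be all of $\Delta$, whereas your telescoping of the highest roots $\theta'_t$ (using that the highest root of an irreducible parabolic has full support on its component) reproves the same conclusion by hand.
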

\begin{proof}
  By Proposition~\ref{prop:span_w_inv}, we may assume that $\lambda$ is dominant. By Propositions~\ref{prop:unlabeled_stabilization} and~\ref{prop:simply_laced_dominant}, there exists a sequence
  \[\lambda=\lambda_0 \raPP \lambda_1 \raPP \dots \raPP \lambda_t=\rho+\omega\]
  such that $\lambda_s$ is dominant for $0\leq s\leq t$. Now, the fact that $\lambda$ belongs to the interior of~$\Pi^{\circ,Q}(\lambda_t)$ means that for every $i\in [n]$ we have $\<\lambda_0,\omega_i\> < \<\lambda_t,\omega_i\>$ (because the fundamental weights $\omega_i$ are the normals to the facets of $\Pi(\lambda_t)$ containing the vertex~$\lambda_t$). This means $(\rho+\omega) - \lambda = \sum_{i=1}^{n}a_i\alpha_i$ where $a_i \geq 1$ for all $i \in [n]$. But because $\sum_{i=1}^{n}a_i\alpha_i \in \SpanPP(\lambda)$, by Proposition~\ref{prop:span_simp} we conclude that $\alpha_i \in \SpanPP(\lambda)$ for all $i \in [n]$, thus proving the claim.
  \end{proof}

Thus for simply laced root systems, Propositions~\ref{prop:span_pi_2_rho} and~\ref{prop:span_pi_rho} tell us that the weights outside $\Pi(2\rho)$ are not connected while the weights inside $\Pi^{\circ,Q}(\rho+\omega)$ are connected for any $\omega\in \Omega^0_m$. In Type A we can show that actually the latter are the only connected weights:

\begin{prop}\label{prop:span_A}
Suppose $\Phi=A_n$. Then a weight $\lambda \in P$ is connected if and only if~$\lambda \in \Pi^{\circ,Q}(\rho+\omega)$ for some $\omega\in \Omega^0_m$. 
\end{prop}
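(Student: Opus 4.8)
The ``if'' direction is already supplied by Proposition~\ref{prop:span_pi_rho} (which holds for every simply laced $\Phi$), so the real content is the ``only if'' direction: in Type $A_n$ every connected weight lies in some $\Pi^{\circ,Q}(\rho+\omega)$ with $\omega\in\Omega_m^0$. Since connectedness is $W$-invariant (Proposition~\ref{prop:span_w_inv}), I would assume $\lambda$ is dominant, and let $\omega\in\Omega_m^0$ be the unique minuscule-or-zero weight with $\lambda-\rho\in Q+\omega$, so that $\Pi^{\circ,Q}(\rho+\omega)$ is the only candidate containing $\lambda$. The first step is to combinatorialize connectedness. By Proposition~\ref{prop:span_simp}, $\SpanPP(\lambda)$ is spanned by the simple roots $\alpha_i$ that are fireable from some dominant weight reachable from $\lambda$ (using Proposition~\ref{prop:simply_laced_dominant} to stay dominant); as the simple roots form a basis, $\lambda$ is connected iff \emph{every} $\alpha_i$ is fireable. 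In the chip model of Section~\ref{sec:labeled-chip-firing-B-C-D} a dominant weight is a sorted configuration $\lambda_1\ge\cdots\ge\lambda_N$, and $\alpha_i$ is fireable exactly when chips $i$ and $i+1$ coincide; since two chips $a<b$ coincide only if all chips between them do, a coincidence across the cut $\{1,\dots,i\}\mid\{i+1,\dots,N\}$ forces $\alpha_i$ to be fireable. Thus $\lambda$ is connected iff \emph{every cut is crossed}, i.e.\ for each $i$ some reachable dominant configuration has equal $i$-th and $(i+1)$-th chips.

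Next I would extract a clean invariant. Writing $P_k(\mu):=\mu_1+\cdots+\mu_k$, a dominant firing move spreads a maximal cluster (raising its top chip, lowering its bottom chip), so each $P_k$ is non-decreasing along any firing sequence, and $P_i$ strictly increases precisely when a cluster containing both chips $i$ and $i+1$ is fired --- equivalently, exactly when cut $i$ is crossed. Let $\nu$ denote the stabilization of $\W\lambda$ (it exists and is unique by Corollary~\ref{cor:unlabeled_confluent}), and note $\nu$ is strictly dominant. Applying Lemma~\ref{lem:permtrap} with $\rho+\mu''=\nu$ (legitimate because $\lambda\in\Pi^Q(\nu)$) traps every reachable weight in $\Pi^Q(\nu)$, giving the ceiling $P_k(\mu)\le P_k(\nu)$. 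Combining this with the monotonicity yields the equivalence
\[\text{cut $k$ is crossed}\iff P_k(\lambda)<P_k(\nu),\]
since $P_k$ climbs from $P_k(\lambda)$ to $P_k(\nu)$ in unit steps, each caused by a crossing of cut $k$. Because for a dominant point $\lambda\in\Pi^{\circ}(\nu)$ iff $P_k(\lambda)<P_k(\nu)$ for all $k$, this proves the key structural statement
\[\lambda\ \text{is connected}\iff\lambda\in\Pi^{\circ}(\nu),\qquad \nu=\text{the $\raPP$-stabilization of }\W\lambda.\]

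The Proposition now reduces to matching $\nu$ with the minuscule candidate. Recalling from Lemma~\ref{lem:affinehighest} (and the Type A classification $\Omega_m=\{\omega_1,\dots,\omega_n\}$) that a strictly dominant weight $\nu=\rho+\mu$ equals $\rho+\omega$ for some $\omega\in\Omega_m^0$ iff its gap vector $\big(\langle\nu,\alpha_i^\vee\rangle\big)_i=\big(1+\langle\mu,\alpha_i^\vee\rangle\big)_i$ is all $1$'s with at most one entry equal to $2$, the remaining task is the following \textbf{crux}, which I expect to be the main obstacle: if the stabilization $\nu$ has gap-excess $\sum_i\langle\mu,\alpha_i^\vee\rangle\ge 2$ (i.e.\ $\mu\notin\Omega_m^0$), then \emph{every} weight in the basin of $\nu$ lies on $\partial\Pi(\nu)$, hence is not connected by the equivalence above. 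Equivalently, a connected weight must stabilize to the minuscule $\rho+\omega$, after which $\Pi^{\circ}(\nu)=\Pi^{\circ,Q}(\rho+\omega)$ gives the claim (note $\lambda\in\Pi^{\circ}(\nu)$ is automatically in the coset of $\rho+\omega$). This step is genuinely delicate: the examples show a weight can stabilize to a \emph{minuscule} $\rho+\omega$ and still fail to be connected --- it then sits on $\partial\Pi(\rho+\omega)$ with some $P_k(\lambda)=P_k(\rho+\omega)$ saturated --- so the argument cannot be read off from the stabilization alone, and the saturated cut need not sit at a hole of $\nu$.

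To prove the crux I would induct on $N$ via the block structure forced by an uncrossed cut: if chips $\{1,\dots,i\}$ never coincide with $\{i+1,\dots,N\}$, then every firing move is internal to one of the two blocks, so they evolve as independent central-firing systems of lower rank, each of which is connected in its own right and (by the inductive hypothesis) stabilizes to a minuscule weight with gap-excess at most $1$. Concatenating these block stabilizations, and accounting for the gaps across the uncrossed cut(s), one shows that a non-minuscule total $\nu$ forces $P_k(\lambda)=P_k(\nu)$ at a block boundary, i.e.\ $\lambda\in\partial\Pi(\nu)$; the careful bookkeeping of the excess contributed by each block together with the boundary gaps, and the handling of size-$1$ blocks and the case $\omega=0$, is the technical heart. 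Proposition~\ref{prop:span_pi_2_rho} furnishes the outer bound $\lambda\in\Pi(2\rho)$ that seeds and controls this induction, and I expect the chip-firing techniques of~\cite{hopkins2017sorting} (as anticipated in Section~\ref{sec:labeled-chip-firing-B-C-D}) to drive the block analysis. This would complete the characterization and, via the running equivalence, settle Question~\ref{question:centralconf} for Type A connectedness.
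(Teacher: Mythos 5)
Your front end is correct, and it is organized differently from the paper's own proof: the ``if'' direction via Proposition~\ref{prop:span_pi_rho}, the reduction to dominant $\lambda$ via Proposition~\ref{prop:span_w_inv}, and then the chain ``connected $\iff$ every cut is crossed $\iff$ $P_k(\lambda)<P_k(\nu)$ for every $k$ $\iff$ $\lambda\in\Pi^{\circ}(\nu)$,'' where $\nu$ is the $\raPP$-stabilization of $\W\lambda$, all check out; monotonicity of partial sums along dominant firing sequences, Proposition~\ref{prop:simply_laced_dominant}, Proposition~\ref{prop:span_simp}, and Corollary~\ref{cor:unlabeled_confluent} are exactly what is needed, and Proposition~\ref{prop:unlabeled_stabilization} pins down $\nu$ whenever $\lambda\in\Pi^{Q}(\rho+\omega)$. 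The paper's proof never passes through the stabilization at all, so this is a legitimate alternative framing. But the framing carries no new information: the entire content of the ``only if'' direction sits in what you call the crux, and your proposed argument for the crux is circular.

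The crux says: if $\lambda$ is connected (every cut crossed), then $\nu=\rho+\omega$ with $\omega\in\Omega^0_m$. Your induction is ``via the block structure forced by an uncrossed cut.'' When an uncrossed cut exists, your own equivalence already gives $P_k(\lambda)=P_k(\nu)$, i.e.\ $\lambda\in\partial\Pi(\nu)$, and there is nothing left to prove; and in the only case that matters --- $\lambda$ connected, so no cut is uncrossed --- there are no blocks, no reduction of $N$, and the ``inductive hypothesis'' is exactly the full statement for the same $N$. So the induction never gets off the ground. What is missing is an invariant that survives an arbitrary firing sequence and certifies that one specific cut can never be crossed. That is precisely what the paper supplies: writing $f_\mu(i)=\<\mu,\omega_i\>$, if $\lambda\notin\Pi^{\circ,Q}(\rho+\omega)$ choose $j$ maximizing $f_\lambda(j)-f_\rho(j)$ (this maximum is $\geq 0$). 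For a dominant $\mu$, a fireable root $\alpha=\alpha_a+\cdots+\alpha_b$ has $\<\mu,\alpha_i^\vee\>=0$ for all $a\leq i\leq b$, so $f_\mu$ is linear on $[a-1,b+1]$ while $f_\rho$ is strictly concave there; hence $f_\mu-f_\rho$ is strictly convex on $[a-1,b+1]$ and attains its maximum only at the endpoints, so the support of every fireable root avoids $j$, and firing (which adds $1$ to $f$ on $[a,b]$) keeps $j$ maximal. Thus cut $j$ is never crossed and $\alpha_j\notin\SpanPP(\lambda)$, so $\lambda$ is not connected. (If you rebuild this yourself, measure against $f_{\rho+\omega}$ instead of $f_\rho$: that potential is integer-valued, so the $+1$ increments on the support genuinely cannot overtake the maximum, whereas with $f_\rho$ the convexity gaps can be as small as $\tfrac12$ and a firing can move the maximizer.) Without this or an equivalent mechanism, your proposal establishes only the easy direction together with a reformulation of the hard one.
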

\begin{proof}
By definition, in order to have $\lambda \in \Pi^{\circ,Q}(\rho+\omega)$, we must have $(\rho+\omega- \lambda)\in Q$, and this holds for exactly one element $\omega\in  \Omega^0_m$. We thus fix $\omega \in \Omega^0_m$ to be such that $(\rho+\omega- \lambda)\in Q$. By Proposition~\ref{prop:span_pi_rho}, we only need to show that if $\lambda\notin\Pi^{\circ,Q}(\rho+\omega)$ then $\lambda$ is not connected. And by Proposition~\ref{prop:span_w_inv}, we only need to consider the case when $\lambda$ is dominant. Recall that for $\Phi=A_n$, the simple roots are numbered as in Figure~\ref{fig:dynkinclassification}. So suppose that the dominant weight $\lambda$ has $\lambda \notin\Pi^{\circ,Q}(\rho+\omega)$, which means that for some $i_0\in [n]$ we have $\<\lambda,\omega_{i_0}\>\geq \<\rho+\omega,\omega_{i_0}\>\geq \<\rho,\omega_{i_0}\>$. 
  
  For a dominant weight $\nu\in P$, define $f_\nu:[0,n+1]\to\R_{\ge0}$ by $f_\nu(0)=f_\nu(n+1)=0$ and $f_\nu(i):=\<\nu,\omega_i\>$ for $1\leq i\leq n$. Thus $\nu=\sum_{i\in[n]}f_\nu(i)\alpha_i$, and for each $i\in[n]$, we have $\<\nu,\alpha^\vee_i\>=2f_\nu(i)-f_\nu(i-1)-f_\nu(i+1)$. Let $j\in [1,n]$ be such that $f_\lambda(i)-f_\rho(i)\leq f_\lambda(j)-f_\rho(j)$ for all $i\in [1,n]$. In particular, we have $f_\lambda(j)-f_\rho(j)\geq 0$ since $f_\lambda(i_0)-f_\rho(i_0)\geq 0$. 

We claim that $\alpha_j\notin\SpanPP(\lambda)$. To see that, suppose that $\alpha\in\Phi^+$ is orthogonal to $\lambda$, and let us write $\alpha=\alpha_a+\alpha_{a+1}+\dots+\alpha_b$ for some $1\leq a\leq b\leq n$. Then $\<\lambda,\alpha^\vee\>=0$ means that $\<\lambda,\alpha_i^\vee\>=0$ for all~$a\leq i\leq b$. Equivalently, the numbers $f_\lambda(a-1), f_\lambda(a),\dots,f_\lambda(b),f_\lambda(b+1)$ form an arithmetic progression, so the restriction of $f_\lambda$ to the interval $[a-1,b+1]$ is a linear function. On the other hand, we have $\<\rho,\alpha_i^\vee\>>0$ for all $i\in [n]$, so $f_\rho$ is a strictly concave function on $[0,n+1]$. Thus the function $f_\lambda-f_\rho$ is a strictly convex function on $[a-1,b+1]$ which therefore attains its maximum on one of the endpoints of this segment. More precisely, for $a\leq i\leq b$, we have $f_\lambda(i)-f_\rho(i)< f_\lambda(a-1)-f_\rho(a-1)$ or $f_\lambda(i)-f_\rho(i)< f_\lambda(b+1)-f_\rho(b+1)$. This shows that $j\notin [a,b]$ since $f_\lambda(j)-f_\rho(j)$ is the maximal value of $f_\lambda-f_\rho$. 

  Now, when we fire $\alpha$ from $\lambda$, we get
  \[f_{\lambda+\alpha}(i)=
    \begin{cases}
      f_\lambda(i)+1, &\text{if $a\leq i\leq b$;}\\
      f_\lambda(i),&\text{otherwise.}
    \end{cases} \]
  In particular,  $f_{\lambda+\alpha}(j)-f_\rho(j)$ is still the maximal value of $f_{\lambda+\alpha}-f_\rho$, so the proof follows by induction.
\end{proof}

\begin{remark}
  We note that Proposition~\ref{prop:span_A} does not hold for other simply laced root systems. For instance, let $\Phi = D_4$, with the numbering of simple roots as in Figure~\ref{fig:dynkinclassification} (so $\alpha_2$ corresponds to the vertex of the Dynkin diagram of degree $3$). Let us abbreviate the weight $t(\alpha_1+\alpha_3+\alpha_4)+r\alpha_2$ by $\nu_{t,r}$. Consider the weight $\lambda=\nu_{3,6}$. Although $\lambda \in Q$, in fact $\lambda$ does not belong to $\Pi^{\circ,Q}(\rho)$ since in this case $\rho=\nu_{3,5}$. However, the roots $\alpha_1,\alpha_3,\alpha_4$ are all orthogonal to $\lambda$ and to each other so we can fire them to get to $\nu_{4,6}$, which is then orthogonal to $\alpha_2$. Thus all simple roots belong to $\SpanPP(\lambda)$ and so $\lambda$ is connected even though it is outside $\Pi^{\circ,Q}(\rho)$.
\end{remark}

\subsection{Interpretation of connectedness in terms of chips}\label{sec:interpr-terms-chips}

Let us translate the notions from this section to the language of chips. So for the remainder of this section, we assume that $\Phi = A_{N-1}$. We will see that the classification of connected weights in Type A leads to an interesting procedure for computing stabilizations of unlabeled chip configurations on a line.

Consider the map $\sumcoordmap:\mathbb{R}^N\to\mathbb{R}$ defined by $\sumcoord{v}=v_1+v_2+\dots+v_N$. Recall that we may identify $V$ with the $(N-1)$-dimensional space $\{v\in\mathbb{R}^N\colon \sumcoord{v}=0\}$. We denote the standard basis vectors of $\mathbb{R}^N$ by $e_1,\dots,e_N$ and we let $\allones=\frac1N(e_1+e_2+\dots+e_N)$. Each simple root $\alpha_i$ for $1\leq i< N$ equals $e_{i}-e_{i+1}\in V$.
The fundamental weight $\omega_i$ has coordinates $e_1+e_2+\dots+e_i-i \allones$, and thus the weight lattice $P$ is given by
\[P=\{a_0\allones+a_1e_1+\dots+a_Ne_N\colon a_0,\dots,a_N\in\mathbb{Z}: a_0+a_1+\dots+a_N=0\}. \]

As in Section~\ref{sec:labeled-chip-firing-B-C-D}, a (labeled) chip configuration is a vector $v\in \mathbb{Z}^N$: this vector corresponds to the $i$-th chip being at position $v_i$. Each labeled chip configuration~$v$ corresponds to a weight $\lambda(v)=(\lambda_1,\dots,\lambda_N)$ defined by $\lambda(v)=v-\sumcoord v \allones$. An \emph{unlabeled chip configuration} is a configuration $v\in \mathbb{Z}^N$ whose entries are weakly decreasing: $v_1\geq v_2\geq\dots\geq v_N$. Thus we view unlabeled chip configurations as labeled chip configurations whose labeling are weakly decreasing from left to right. Clearly, $v\in\mathbb{Z}^N$ is such a configuration if and only if the weight~$\lambda(v)$ is dominant. For the rest of this section, we assume all chip configurations to be unlabeled.

A chip configuration $v\in\mathbb{Z}^N$ is stable if it does not have two chips located in the same position, equivalently, if $\lambda(v)$ is a strictly dominant weight. We say that a stable configuration $v\in\mathbb{Z}^N$ \emph{has at most one gap} if there is at most one position $t\in\mathbb{Z}$ such that $v_N<t<v_1$ but $t\neq v_i$ for any $1\leq i\leq N$. The weight $\rho$ has coordinates $(N-1,N-2,\dots,0)-\binom{N}{2}\allones$, so any $v\in\mathbb{Z}^N$ with $\lambda(v)=\rho$ is a stable configuration of~$N$ chips with no gaps. All the fundamental weights are minuscule and for $1\leq i<N$, $\rho+\omega_i$ corresponds to a stable chip configuration with exactly one gap.

Given an unlabeled chip configuration $v\in\mathbb{Z}^N$, we define its \emph{pseudo-stabilization} $\pstv$ as follows: $\pstv$ is the unique unlabeled stable configuration with at most one gap such that $\sumcoord{\pstv}=\sumcoord v$, i.e., such that $v$ and $\pstv$ have the same \emph{center of mass}. The motivation for this definition is the following simple observation: for any unlabeled chip configuration $v\in\mathbb{Z}^N$ we have
\begin{equation}\label{eq:pseudostab}
\lambda(\pstv)=\rho+\omega,
\end{equation}
where $\omega \in \Omega^0_m$ is such that $\lambda(v)-\rho \in Q+\omega$.

\def\f{f}
For $v\in \mathbb{Z}^N$, define $\f_v:[N]\to \mathbb{Z}$ by $\f_v(i)=v_1+\dots+v_i$. Note that $\f_v(N)=\sumcoord v$. Given two unlabeled chip configurations $u,v\in\mathbb{Z}^N$ with the same center of mass $\sumcoord u=\sumcoord v$, we write $u\domin v$ if for all $1\leq i\leq N$ we have $\f_u(i)\leq f_v(i)$. We write $u\sdomin v$ if for  all~$1\leq i<N$ we have $f_u(i)< f_v(i)$. 

\begin{prop}
  Given any unlabeled chip configuration $v\in\mathbb{Z}^n$, the dominant weight $\lambda(v)$ is connected if and only if $v\sdomin \pstv$.
\end{prop}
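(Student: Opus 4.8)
The plan is to reduce the statement to Proposition~\ref{prop:span_A} and then translate the resulting polytope membership into the partial-sum inequalities defining $\sdomin$.

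First I would fix the unique $\omega\in\Omega^0_m$ with $\lambda(v)-\rho\in Q+\omega$; this is exactly the $\omega$ appearing in~\eqref{eq:pseudostab}, so that $\lambda(\pstv)=\rho+\omega$. Since every class in $P/Q$ contains a unique element of $\Omega^0_m$, no other $\omega'\in\Omega^0_m$ satisfies $\lambda(v)\in Q+(\rho+\omega')$; hence in Proposition~\ref{prop:span_A} the condition ``$\lambda(v)\in\Pi^{\circ,Q}(\rho+\omega')$ for some $\omega'\in\Omega^0_m$'' can be met only by this one $\omega$, and for it the lattice constraint $\lambda(v)\in Q+(\rho+\omega)$ holds automatically. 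Therefore $\lambda(v)$ is connected if and only if $\lambda(v)\in\Pi^{\circ}(\rho+\omega)=\Pi^{\circ}(\lambda(\pstv))$, with no further lattice condition to check.

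The heart of the argument is then to show that, for the two dominant weights $\lambda(v)$ and $\lambda(\pstv)$, membership $\lambda(v)\in\Pi^{\circ}(\lambda(\pstv))$ is equivalent to $v\sdomin\pstv$. Here I would use the regularity of $\rho+\omega$ (it is strictly dominant, being a sum of $\rho$ and a dominant weight), so that $\Pi(\lambda(\pstv))$ is a full-dimensional permutohedron in $V$ with the standard partial-sum facet description: realizing $V=\{x\in\mathbb{R}^N\colon \sumcoord{x}=0\}$, a point $x$ lies in $\Pi(\lambda(\pstv))$ iff $\sum_{i\in S}x_i\le\sum_{i=1}^{|S|}\lambda(\pstv)_i$ for every proper nonempty $S\subseteq[N]$, and in the interior iff all these inequalities are strict. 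Because $\lambda(v)$ is dominant its coordinates are weakly decreasing, so among subsets of a fixed size $k$ the left-hand side is largest for the initial segment $S=\{1,\dots,k\}$; thus all strict inequalities hold iff they hold for initial segments, i.e. iff $\sum_{i=1}^{k}\lambda(v)_i<\sum_{i=1}^{k}\lambda(\pstv)_i$ for $1\le k\le N-1$. Finally, using $\lambda(v)_i=v_i-\tfrac1N\sumcoord{v}$, $\lambda(\pstv)_i=\pstv_i-\tfrac1N\sumcoord{\pstv}$, and the defining property $\sumcoord{\pstv}=\sumcoord{v}$ of the pseudo-stabilization, the $\allones$-shift cancels and this reads $f_v(k)<f_{\pstv}(k)$ for all $1\le k<N$, which is precisely $v\sdomin\pstv$.

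I expect the main obstacle to be bookkeeping rather than a deep difficulty: one must verify that the permutohedron of a regular weight genuinely has the clean partial-sum facet description, so that ``interior'' equals ``all facet inequalities strict,'' and that reducing from arbitrary subsets $S$ to initial segments is legitimate given only weak dominance of $\lambda(v)$ (ties among the coordinates of $\lambda(v)$ are harmless, since equality in a non-binding inequality cannot spoil strictness of the binding one). A secondary point to pin down is the cancellation of the center-of-mass shift, which hinges on $\sumcoord{\pstv}=\sumcoord{v}$; this is exactly what makes $\sdomin$ (defined only for configurations of equal center of mass) the correct combinatorial shadow of permutohedron containment in the dominance order.
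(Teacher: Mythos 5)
Your proof is correct and takes essentially the same approach as the paper: both reduce the statement via Proposition~\ref{prop:span_A} and~\eqref{eq:pseudostab} to the membership $\lambda(v)\in\Pi^{\circ}(\rho+\omega)=\Pi^{\circ}(\lambda(\pstv))$, and then identify this, for dominant $\lambda(v)$, with the strict facet inequalities at the dominant vertex, which (after the center-of-mass cancellation) are exactly the partial-sum inequalities $f_v(i)<f_{\pstv}(i)$ for $1\leq i<N$, i.e.\ $v\sdomin\pstv$. The only cosmetic difference is that the paper cites the general fact that the fundamental weights $\omega_i$ are the facet normals of $\Pi(\rho+\omega)$ at $\rho+\omega$, whereas you re-derive this in Type~A from the all-subsets description of the permutohedron together with the weak decrease of the coordinates of $\lambda(v)$.
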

\begin{proof}
  This follows from Proposition~\ref{prop:span_A} together with~\eqref{eq:pseudostab} and the observation that the facet inequalities describing $\Pi^{\circ}(\rho+\omega)$ at $\rho+\omega$ are precisely~$\<\lambda(v),\omega_i\><\<\rho+\omega,\omega_i\>$ for all~$1\leq i<N$.
\end{proof}

The following corollary can be easily deduced from Proposition~\ref{prop:unlabeled_stabilization} combined with the proof of Proposition~\ref{prop:span_A}.

\begin{cor}\label{cor:unlabeled_connected_chip}
  Suppose that $v\in\mathbb{Z}^N$ is an unlabeled chip configuration (and so the chips in $v$ are labeled from left to right in weakly decreasing order). If $v\sdomin \pstv$ then the stabilization of $v$ is $\pstv$. Otherwise, if $1\leq j< N$ is the index that maximizes the quantity
  \[v_1+v_2+\dots+v_j- \left(\pstv_1+\pstv_2+\dots+\pstv_j \right)\]
  then the chips with labels $j$ and $j+1$ can never fire together in any (unlabeled) chip-firing sequence starting from $v$.
\end{cor}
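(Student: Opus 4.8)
The plan is to pass from chips to weights and then apply Proposition~\ref{prop:unlabeled_stabilization} for the first claim and the \emph{proof} (not just the statement) of Proposition~\ref{prop:span_A} for the second. First I would record the dictionary. By~\eqref{eq:pseudostab} we have $\lambda(\pstv)=\rho+\omega$ for the unique $\omega\in\Omega^0_m$ with $\lambda(v)-\rho\in Q+\omega$, and since $v$ and $\pstv$ share the same center of mass $\sumcoord{v}=\sumcoord{\pstv}$, the identity $\langle\lambda(v),\omega_i\rangle=f_v(i)-\tfrac{i}{N}\sumcoord{v}$ yields
\[ f_v(i)-f_{\pstv}(i)=\langle\lambda(v)-(\rho+\omega),\omega_i\rangle \qquad (1\le i<N). \]
Hence $v\sdomin\pstv$ translates exactly into $\langle\lambda(v),\omega_i\rangle<\langle\rho+\omega,\omega_i\rangle$ for all $1\le i<N$, i.e.\ into $\lambda(v)\in\Pi^{\circ,Q}(\rho+\omega)$, and the index $j$ maximizing $f_v(j)-f_{\pstv}(j)$ is the same as the one maximizing $\langle\lambda(v)-(\rho+\omega),\omega_j\rangle$.

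For the first claim, suppose $v\sdomin\pstv$. Then $\lambda(v)\in\Pi^{\circ,Q}(\rho+\omega)\subseteq\Pi^{Q}(\rho+\omega)$, so Proposition~\ref{prop:unlabeled_stabilization} gives that the $\raPP$-stabilization of $\W\lambda(v)$ is $\W(\rho+\omega)$. Since unlabeled chip-firing is precisely $\raPP$ on $P/W$ and a stable unlabeled configuration corresponds to the unique strictly dominant weight in its orbit, the terminal configuration is the one of weight $\rho+\omega$, namely $\pstv$. Confluence (Corollary~\ref{cor:unlabeled_confluent}) guarantees $\pstv$ is reached regardless of the firing order, so the stabilization of $v$ is $\pstv$.

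For the second claim, suppose $v\not\sdomin\pstv$, so $\lambda(v)\notin\Pi^{\circ,Q}(\rho+\omega)$. I would re-run the argument in the proof of Proposition~\ref{prop:span_A} with the strictly dominant weight $\rho+\omega$ playing the role of $\rho$. That argument uses only that $i\mapsto\langle\rho,\omega_i\rangle$ is strictly concave on $[0,N]$ (which holds for any strictly dominant reference weight) and that a positive root $\alpha_a+\cdots+\alpha_b$ orthogonal to a weight forces that weight's pairing function to be linear on $[a-1,b+1]$. As $\rho+\omega$ is strictly dominant, $i\mapsto\langle\rho+\omega,\omega_i\rangle$ is strictly concave and the convexity estimate goes through verbatim: the index $j$ maximizing $\langle\lambda(v)-(\rho+\omega),\omega_j\rangle=f_v(j)-f_{\pstv}(j)$ remains the unique maximizer after any firing move, so $\alpha_j$ is never fired and $\alpha_j\notin\SpanPP(\lambda(v))$. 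To finish, note that the two sorted chips indexed $j$ and $j+1$ sit at a common position exactly when $\langle\lambda(v),\alpha_j^\vee\rangle=0$, and firing them realizes the elementary move $\mu\raPP\mu+\alpha_j$ on $P$. Since each $\raPP$-step between dominant weights on $P/W$ lifts (Proposition~\ref{prop:simply_laced_dominant}, using that each orbit has a unique dominant representative) to a $\raPP$-step between the corresponding dominant weights on $P$, we have $\lambda(v)\raPPAst\mu$ on $P$; thus if chips $j$ and $j+1$ ever fired together, $\alpha_j$ would be fired from such a dominant $\mu$, forcing $\alpha_j\in\SpanPP(\lambda(v))$ by Proposition~\ref{prop:span_simp} --- a contradiction.

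The main obstacle is not conceptual but lies in two verifications. First, I must confirm that the estimates in the proof of Proposition~\ref{prop:span_A} depend only on the strict dominance of the reference weight, so that $\rho+\omega$ legitimately replaces $\rho$ and the maximizer produced by that argument is exactly the $j$ named in the statement. Second, I must pin down the precise correspondence between a single step of unlabeled chip-firing on a sorted configuration and a single $\raPP$-firing of the simple root $\alpha_j$ at a dominant weight, since it is this identification that licenses the appeal to Proposition~\ref{prop:span_simp}.
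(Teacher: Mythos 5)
Your proposal is correct and is essentially the paper's own proof: the paper deduces this corollary exactly as you do, combining Proposition~\ref{prop:unlabeled_stabilization} (for the case $v\sdomin\pstv$, via the translation $v\sdomin\pstv\Leftrightarrow\lambda(v)\in\Pi^{\circ,Q}(\rho+\omega)$) with a re-run of the proof of Proposition~\ref{prop:span_A} in which the strictly dominant weight $\rho+\omega$ replaces $\rho$. Both of your flagged verifications go through: strict concavity is all the reference weight needs (together with integrality of $f_v-f_{\pstv}$, automatic for chip partial sums, which turns ``strictly less'' into ``less by at least $1$'' in the induction step), and the correspondence between unlabeled moves and dominant-weight firings is exactly Proposition~\ref{prop:simply_laced_dominant}, which also ensures that the ``orthogonal root forces a linear stretch'' lemma is only ever invoked at dominant weights, where it is valid.
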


\begin{example}
  Let $N=11$. Consider the configuration $v=(8,8,8,8,4,3,3,0,0,0,0)$ shown in Figure~\ref{fig:chips_unlabeled} (top). We compute
  \[\pstv =(9,8,7,6,5,4,3,2,1,-1,-2)\]
  shown in Figure~\ref{fig:chips_unlabeled} (middle).
  Taking the partial sums gives us the functions $\f_v$ and $\f_\pstv$:
  \begin{equation*}
    \begin{split}
  \f_v   &=(8,16,24,32,36,39,42,42,42,42,42),\\
  \f_\pstv&=(9,17,24,30,35,39,42,44,45,44,42).
    \end{split}
  \end{equation*}
  Here we write $\f_v=(\f_v(1),\f_v(2),\dots,\f_v(N))$. Note that \[\f_v(N)=\f_\pstv(N)=\sumcoord{v}=\sumcoord{\pstv}.\]
  
  Comparing $\f_v$ with $\f_\pstv$, we see that $\lambda(v)$ is not connected and the index $j$ from Corollary~\ref{cor:unlabeled_connected_chip} is equal to $4$ with $\f_v(4)-\f_\pstv(4)=32-30=2$. One can check directly that chips~\chip4 and~\chip5 in Figure~\ref{fig:chips_unlabeled} can never fire together in any unlabeled chip-firing sequence starting from $v$. 

Let us now split $v$ into two configurations  $v'=(8,8,8,8)$ and $v''=(4,3,3,0,0,0,0)$ with $N'=4$ and $N''=7$ chips respectively. One can compute that $\pstv'=(10,9,7,6)$ and~$\pstv''=(5,4,3,1,0,-1,-2)$. We then have
  \begin{equation*}
    \begin{split}
  \f_{v'}&=(8,16,24,32),\\
  \f_{\pstv'}&=(10,19,26,32),
    \end{split}
  \end{equation*}
  and
    \begin{equation*}
    \begin{split}
  \f_{v''}&=(4,7,10,10,10,10,10),\\
  \f_{\pstv''}&=(5,9,12,13,13,12,10).
\end{split}
\end{equation*}

\begin{figure}

  \def\chiptikzscl{0.7}
  \def\chipscl{0.6}
  \def\chipcoef{1.2}
  \def\dashedfrom{-4}
  \def\dashedto{12}
  \def\solidfrom{-3}
  \def\solidto{11}
  \begin{tabular}{c}
  \begin{tikzpicture}[scale=\chiptikzscl,block/.style={draw,circle, minimum width={width("11")+12pt},
font=\small,scale=\chipscl}]
    \foreach \x in {-2,-1,...,10} {%
      \node[anchor=north] (A\x) at (\x,0) {$\x$};
    }
    \draw[dashed] (\dashedfrom,0) -- (\dashedto,0);
    \draw (\solidfrom,0) -- (\solidto,0);
    \foreach[count=\i] \a/\b in {8/1,8/2,8/3,8/4,4/1,3/1,3/2,0/1,0/2,0/3,0/4} {%
      \node[block] at (\a,{\b*\chipscl*\chipcoef-0.5*\chipscl*\chipcoef}) {$\i$};
      }
  \end{tikzpicture}\\ \\
  \begin{tikzpicture}[scale=\chiptikzscl,block/.style={draw,circle, minimum width={width("11")+12pt},
font=\small,scale=\chipscl}]
    \foreach \x in {-2,-1,...,10} {%
      \node[anchor=north] (A\x) at (\x,0) {$\x$};
    }
    \draw[dashed] (\dashedfrom,0) -- (\dashedto,0);
    \draw (\solidfrom,0) -- (\solidto,0);
    \foreach[count=\i] \a/\b in {9/1,8/1,7/1,6/1,5/1,4/1,3/1,2/1,1/1,-1/1,-2/1} {%
      \node[block] at (\a,{\b*\chipscl*\chipcoef-0.5*\chipscl*\chipcoef}) {$\i$};
      }
    \end{tikzpicture}\\ \\
    \begin{tikzpicture}[scale=\chiptikzscl,block/.style={draw,circle, minimum width={width("11")+12pt},
font=\small,scale=\chipscl}]
    \foreach \x in {-2,-1,...,10} {%
      \node[anchor=north] (A\x) at (\x,0) {$\x$};
    }
    \draw[dashed] (\dashedfrom,0) -- (\dashedto,0);
    \draw (\solidfrom,0) -- (\solidto,0);
    \foreach[count=\i] \a/\b in {10/1,9/1,7/1,6/1,5/1,4/1,3/1,1/1,0/1,-1/1,-2/1} {%
      \node[block] at (\a,{\b*\chipscl*\chipcoef-0.5*\chipscl*\chipcoef}) {$\i$};
      }
  \end{tikzpicture}

  \end{tabular}
  \caption{\label{fig:chips_unlabeled} The chip configuration $v$ (top), its pseudo-stabilization $\pstv$ (middle), and its actual stabilization (bottom).}
\end{figure}
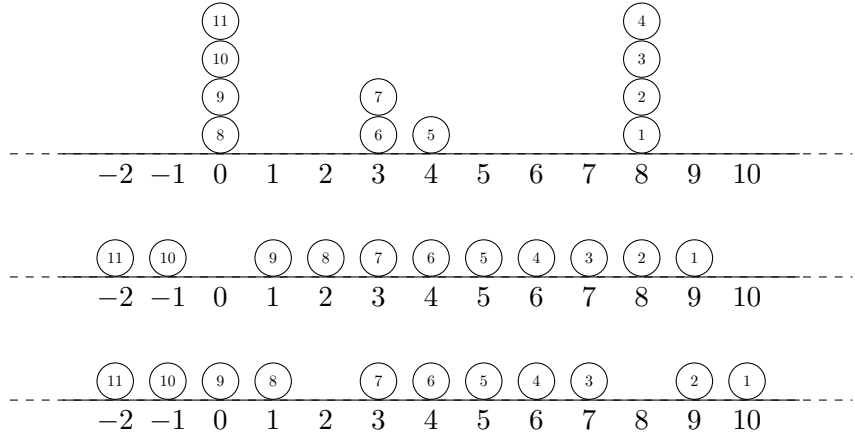

Since $\f_{v'}(i)<\f_{\pstv'}(i)$ for all $1\leq i<N'$ and $\f_{v''}(i)<\f_{\pstv''}(i)$ for all $1\leq i<N''$, we conclude that both $\lambda(v')$ and $\lambda(v'')$ are connected and thus $v'$ and $v''$ stabilize to $\pstv'$ and $\pstv''$ respectively. Therefore the stabilization of $v$ is the superposition of $\pstv'$ and $\pstv''$, namely, the configuration $(10,9,7,6,5,4,3,1,0,-1,-2)$ shown in Figure~\ref{fig:chips_unlabeled} (bottom). We encourage the reader to check that this is indeed the result of playing the chip-firing game starting from $v$.
\end{example}

\section{Confluence of central-firing}\label{sec:confl-centr-firing}

In this section, we make some progress towards answering Question~\ref{question:centralconf} in certain cases. 

\subsection{The confluence conjecture}

We now formulate our main conjecture that describes the initial points in the set $\Omega\cup\{0\}$ from which central-firing is confluent. It is based on extensive computations.

\begin{conj}\label{conj:master_central}
  Let $\omega\in\Omega\cup\{0\}$ be a fundamental weight or zero. Then $\raPP$ is confluent from $\omega$ if and only if $\omega \notin Q+\rho$, unless one of the four exceptional cases happens:
  \begin{enumerate}[\normalfont(1)]
  \item\label{item:excA} $\Phi = A_{n}$ in which case $\raPP$ is confluent from $\omega$ if and only if
    \begin{equation}\label{eq:confluent_weights_type_A}
     \begin{cases}
        \omega=0,\omega_1,\omega_n, &\text{if $n$ is odd;}\\
        \omega=\omega_{n/2},\omega_{n/2+1}, &\text{if $n$ is even.}\\
      \end{cases}
    \end{equation}
  \item $\Phi = B_n$ in which case $\raPP$ is confluent from $\omega=\omega_n$ despite the fact that $\omega_n\in Q+\rho$;
  \item $\Phi = D_{4n+2}$ for $n \geq 1$ in which case $\raPP$ is not confluent from  $\omega=0$ even though $0 \notin Q+\rho$;
    \item\label{item:excG} $\Phi = G_2$ in which case $\raPP$ is confluent from both $\omega_1$ and $\omega_2$ even though $P=Q$.
    \end{enumerate}
    Here the simple roots are numbered as in Figure~\ref{fig:dynkinclassification}.
\end{conj}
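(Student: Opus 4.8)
The plan is to separate the two implications of the ``generic'' equivalence and then peel off the four exceptional families, using Corollary~\ref{cor:unlabeled_confluent} throughout to reduce every statement to a claim about a single $W$-orbit. Since $\raPP$ on $P/W$ is confluent and terminating, every firing sequence from $\omega$ terminates at a stable weight, and all such stable weights lie in one fixed orbit $\W v'$; moreover a weight is $\raPP$-stable if and only if it is regular (a $W$-translate of a strictly dominant weight). Hence confluence from $\omega$ is \emph{equivalent} to the assertion that exactly one element of $\W v'$ is reachable from $\omega$, i.e.\ that every firing sequence lands in a single Weyl chamber. To pin down $v'$ I would locate $\omega$ in the appropriate permutohedron: let $\omega''\in\Omega^0_m$ be the unique minuscule representative with $\omega-\rho\in Q+\omega''$; then $\omega\in\Pi^Q(\rho+\omega'')$ (immediate for $\omega=0$, since $\Pi(\rho+\omega'')$ is centrally symmetric, and checkable for the small fundamental weights that arise), so Proposition~\ref{prop:unlabeled_stabilization} identifies the terminal orbit as $\W(\rho+\omega'')$. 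Crucially, $\omega\in Q+\rho$ if and only if $\omega''=0$, i.e.\ if and only if the terminal orbit is the ``most symmetric'' regular orbit $\W\rho$; this is the structural reason the dichotomy is governed by the class of $\omega$ modulo $Q$.

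For the non-confluence direction (generically $\omega\in Q+\rho$), since $\raPP$ is terminating it suffices to exhibit two firing sequences from $\omega$ ending at two \emph{distinct} stable weights. The cleanest source, when available, is symmetry: if $\omega$ is fixed by a nontrivial diagram automorphism $\sigma$ of $\Phi$ (acting linearly on $V$, preserving $\Phi^+$, hence equivariant for $\raPP$), then applying $\sigma$ to a terminating sequence from $\omega$ gives another from $\sigma(\omega)=\omega$, so both $v'$ and $\sigma(v')$ are reachable and one only needs a sequence reaching some non-dominant $v'$ with $\sigma(v')\neq v'$. In the classical types this is most transparent in the chip model of Section~\ref{sec:labeled-chip-firing-B-C-D}: for $\Phi=A_{2n}$ and $\omega=0$ (an odd number of chips at the origin), the three-chip computation already gives three distinct stable configurations $(1,-1,0)$, $(1,0,-1)$, $(0,1,-1)$ in one $W$-orbit, and this parity obstruction persists for all odd $N$.

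The confluence direction (generically $\omega\notin Q+\rho$) is the main obstacle, and I do not expect a short uniform argument, since this is exactly the phenomenon requiring the delicate sorting analysis of~\cite{hopkins2017sorting} even in Type~A. My plan is two-pronged. First, reduce to the connected case: using Proposition~\ref{prop:span_A} (and type-by-type analogues) one projects onto $\SpanPP(\omega)$ and disposes of the non-connected directions via a smaller root system, so one may assume $\SpanPP(\omega)=V$. Second, on the connected part, seek a monovariant---a quantity preserved by every firing move that separates the chambers meeting $\W(\rho+\omega'')$, forcing a unique terminal chamber; a sorting statistic generalizing that of~\cite{hopkins2017sorting} via the chip interpretation is the natural candidate in Types A--D. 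The key technical difficulty is that, unlike the unlabeled setting, the diamond/local-confluence condition for the labeled process does \emph{not} reduce to rank-$2$ sub-systems (reconciling two divergent moves may require firing roots spanning all of $V$), so Newman's lemma affords no shortcut and one is forced to control the global firing dynamics.

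Finally, the four exceptions each demand a dedicated treatment. Type~$G_2$ (exception~\eqref{item:excG}) is rank $2$ and is settled by a finite direct check. Type~$B_n$ from $\omega_n$ reduces to Type~$D_n$: by Remark~\ref{rmk:half_integers} all chips occupy half-integer positions, so moves~\eqref{move:B} and~\eqref{move:C} never fire and the two processes coincide, which both explains the violation of the $Q+\rho$ heuristic and lets one import the $D_n$ confluence proof. The refined Type~$A_n$ list~\eqref{eq:confluent_weights_type_A} would follow by adapting the sorting argument of~\cite{hopkins2017sorting} to the configurations $\omega_1,\omega_n$ (a single displaced chip) and $\omega_{n/2},\omega_{n/2+1}$ (a balanced split) of Section~\ref{sec:labeled-chip-firing-B-C-D}, tracking how these interact with the parity obstruction. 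The hardest exception is $\Phi=D_{4n+2}$, where confluence fails from $0$ despite $0\notin Q+\rho$: here the symmetry/chip mechanism of the second paragraph must be engineered to exploit the special structure of $P/Q$ for $D_{4n+2}$, and verifying that the two resulting stable configurations are genuinely distinct for every $n$ is where I expect the real work to lie.
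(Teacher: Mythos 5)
You are attempting to prove a statement that is a \emph{conjecture}: the paper itself does not prove Conjecture~\ref{conj:master_central}, and it has no complete proof anywhere. The paper establishes only fragments --- non-confluence for simply laced $\Phi$ of rank greater than one when $\omega\in Q+\rho$ (Corollary~\ref{cor:simply_laced_not_confluent}), non-confluence in Type A for every $\omega\in\init$ outside the list~\eqref{eq:confluent_weights_type_A}, confluence from $0$ in Types $A_{2n-1}$ and $B_n$ imported from~\cite{hopkins2017sorting} (Theorem~\ref{thm:centralconfab}), the folding implications (Proposition~\ref{prop:folding}), and a computer verification for all ranks at most $8$ (Proposition~\ref{prop:comp}) --- and it explicitly lists everything else as open in its closing Problem. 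Your proposal should be read against that backdrop: it is a research plan whose deferred steps (``seek a monovariant,'' ``must be engineered,'' ``where I expect the real work to lie'') are precisely the paper's open problems. Concretely: the entire confluence direction for $\omega\notin Q+\rho$ is unproven beyond the cases just cited; your claim that the $B_n$-from-$\omega_n$ case ``lets one import the $D_n$ confluence proof'' is circular, since no such $D_n$ proof exists --- Remark~\ref{rmk:half_integers} only shows these two \emph{open} cases are equivalent; and the $D_{4n+2}$ non-confluence from $0$ and the refined Type A list~\eqref{eq:confluent_weights_type_A} likewise remain open. So the proposal cannot be accepted as a proof; it has genuine gaps at exactly the hard points.

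Where your plan overlaps with what the paper actually proves, two comparisons are worth making. First, your reduction of confluence from $\omega$ to the statement that exactly one element of the terminal orbit $\W(\rho+\omega'')$ is reachable, via Corollary~\ref{cor:unlabeled_confluent} and Proposition~\ref{prop:unlabeled_stabilization}, is the same reduction the paper makes, and that part is sound. Second, your non-confluence mechanism (diagram-automorphism symmetry, plus the three-chip computation in $A_2$) is strictly weaker than the paper's: $E_7$, $E_8$ and $F_4$ admit no nontrivial diagram automorphism, yet all of their weights $\omega\in\init$ with $\omega\in Q+\rho$ must be shown non-confluent, so symmetry cannot be the general engine. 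The paper instead takes a firing sequence through dominant weights terminating at $\rho$ (using Propositions~\ref{prop:unlabeled_stabilization} and~\ref{prop:simply_laced_dominant}); at the final step the fired root must have the form $\alpha_i+\alpha_j$ with $i,j$ adjacent, and firing $\alpha_i$ alone instead lands at $\rho-\alpha_j=s_{\alpha_j}(\rho)$, a second stable weight --- giving Corollary~\ref{cor:simply_laced_not_confluent} uniformly in the simply laced case with no symmetry hypothesis. You would need an argument of this kind (or the finite computation of Proposition~\ref{prop:comp}, which is legitimate since the reachable set from a fixed $\omega$ is finite by termination) to cover the exceptional-type rows of Table~\ref{tab:central_firing} that your symmetry mechanism cannot reach.
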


More explicitly, the elements of $\Omega\cup\{0\}$ from which central-firing is confluent for each root system are listed in Table~\ref{tab:central_firing}. In particular, observe that the weights corresponding to the exceptional cases~\eqref{item:excA} -- \eqref{item:excG}, which are highlighted in red and green in the table, are quite rare, especially outside Type A. We have verified Conjecture~\ref{conj:master_central} for all root systems of rank at most $8$, see Proposition~\ref{prop:comp}.

\begin{remark}\label{rmk:confluence_classical}
According to Conjecture~\ref{conj:master_central}, for each pair $(\Phi,\omega)$ shown in Figure~\ref{fig:initial_confs}, $\raPP$ is confluent from $\omega$. We encourage the reader to check that the result of applying the moves to these configurations as described in Section~\ref{sec:labeled-chip-firing-B-C-D} does not depend on the choices made along the way.
\end{remark}

We will spend this section proving various parts of Conjecture~\ref{conj:master_central}. We start by showing that in simply laced cases, having $\omega\equiv\rho$ in $P/Q$ implies that $\raPP$ is not confluent from~$\omega$.
\begin{table}
  \centering
  \makebox[\textwidth]{
\begin{tabular}{|@{}c@{}|}\hline
    \scalebox{\sclbx}{
    \begin{tikzpicture}[scale=\tpscale]
      \node (A) at (-1,0) {$A_{2n}$};
      \nodeN{0}{0}{0,0}{\rho }{black}
      \nodeN{1}{1}{1,0}{ }{red}
      \nodeD{2}{2}{2,0}{ }{red}
      \nodeN{3}{n-1}{3,0}{ }{red}
      \nodeC{4}{n}{4,0}{ }{black}
      \nodeC{5}{n+1}{5,0}{ }{black}
      \nodeN{6}{n+2}{6,0}{ }{red}
      \nodeD{7}{7}{7,0}{ }{red}
      \nodeN{8}{2n}{8,0}{ }{red}
      \draw (N1)--(N2)--(N3)--(N4)--(N5)--(N6)--(N7)--(N8);
      \draw[white] (9,0)--(9.3,0);
    \end{tikzpicture}} \\
   \scalebox{\sclbx}{
    \begin{tikzpicture}[scale=\tpscale]
      \node (A) at (-1,0) {$A_{2n+1}$};
      \nodeC{0}{0}{0,0}{ }{black}
      \nodeC{1}{1}{1,0}{ }{black}
      \nodeN{2}{2}{2,0}{ }{red}
      \nodeD{3}{3}{3,0}{ }{red}
      \nodeN{4}{n}{4,0}{ }{red}
      \nodeN{5}{n+1}{5,0}{\rho}{black}
      \nodeN{6}{n+2}{6,0}{ }{red}
      \nodeD{7}{7}{7,0}{ }{red}
      \nodeN{8}{2n}{8,0}{ }{red}
      \nodeC{9}{2n+1}{9,0}{ }{black}
      \draw (N1)--(N2)--(N3)--(N4)--(N5)--(N6)--(N7)--(N8)--(N9);
    \end{tikzpicture}} \\\hline
  \scalebox{\sclbx}{
    \begin{tikzpicture}[scale=\tpscale,decoration={markings,mark=at position 0.7 with {\arrow{>}}}]
      \node (A) at (-1,0) {$B_{n}$};
      \nodeC{0}{0}{0,0}{ }{black}
      \nodeC{1}{1}{1,0}{ }{black}
      \nodeC{2}{2}{2,0}{ }{black}
      \nodeD{3}{3}{3,0}{ }{black}
      \nodeC{4}{n-1}{4,0}{ }{black}
      \nodeC{5}{n}{5,0}{\rho}{green!\shadeofgrey!black}
      \draw (N1)--(N2)--(N3)--(N4);
      \draw[double,postaction={decorate}] (N4) -- (N5);
      \nodeC{4}{n-1}{4,0}{ }{black}
      \nodeC{5}{n}{5,0}{\rho}{green!\shadeofgrey!black}
      \draw[white] (6,0)--(9.2,0);
    \end{tikzpicture}}\\\hline
  \scalebox{\sclbx}{
    \begin{tikzpicture}[scale=\tpscale,decoration={markings,mark=at position 0.7 with {\arrow{>}}}]
      \node (A) at (-1,0) {$C_{4n}$};
      \nodeN{0}{0}{0,0}{\rho}{black}
      \nodeC{1}{1}{1,0}{ }{black}
      \nodeN{2}{2}{2,0}{\rho}{black}
      \nodeC{3}{3}{3,0}{ }{black}
      \nodeN{4}{4}{4,0}{\rho}{black}
      \nodeD{5}{5}{5,0}{ }{black}
      \nodeC{6}{4n-3}{6,0}{ }{black}
      \nodeN{7}{4n-2}{7,0}{\rho}{black}
      \nodeC{8}{4n-1}{8,0}{ }{black}
      \nodeN{9}{4n}{9,0}{\rho}{black}
      \draw (N1)--(N2)--(N3)--(N4)--(N5)--(N6)--(N7)--(N8);
      \draw[double,postaction={decorate}] (N9) -- (N8);
      \nodeC{8}{4n-1}{8,0}{ }{black}
      \nodeN{9}{4n}{9,0}{\rho}{black}
    \end{tikzpicture}}\\
  \scalebox{\sclbx}{
    \begin{tikzpicture}[scale=\tpscale,decoration={markings,mark=at position 0.7 with {\arrow{>}}}]
      \node (A) at (-1,0) {$C_{4n+1}$};
      \nodeC{0}{0}{0,0}{ }{black}
      \nodeN{1}{1}{1,0}{\rho}{black}
      \nodeC{2}{2}{2,0}{ }{black}
      \nodeN{3}{3}{3,0}{\rho}{black}
      \nodeC{4}{4}{4,0}{ }{black}
      \nodeD{5}{5}{5,0}{ }{black}
      \nodeC{6}{4n-2}{6,0}{ }{black}
      \nodeN{7}{4n-1}{7,0}{\rho}{black}
      \nodeC{8}{4n}{8,0}{ }{black}
      \nodeN{9}{4n+1}{9,0}{\rho}{black}
      \draw (N1)--(N2)--(N3)--(N4)--(N5)--(N6)--(N7)--(N8);
      \draw[double,postaction={decorate}] (N9) -- (N8);
      \nodeC{8}{4n}{8,0}{ }{black}
      \nodeN{9}{4n+1}{9,0}{\rho}{black}
    \end{tikzpicture}}\\
  \scalebox{\sclbx}{
    \begin{tikzpicture}[scale=\tpscale,decoration={markings,mark=at position 0.7 with {\arrow{>}}}]
      \node (A) at (-1,0) {$C_{4n+2}$};
      \nodeC{0}{0}{0,0}{ }{black}
      \nodeN{1}{1}{1,0}{\rho}{black}
      \nodeC{2}{2}{2,0}{ }{black}
      \nodeN{3}{3}{3,0}{\rho}{black}
      \nodeC{4}{4}{4,0}{ }{black}
      \nodeD{5}{5}{5,0}{ }{black}
      \nodeN{6}{4n-1}{6,0}{\rho}{black}
      \nodeC{7}{4n}{7,0}{ }{black}
      \nodeN{8}{4n+1}{8,0}{\rho}{black}
      \nodeC{9}{4n+2}{9,0}{ }{black}
      \draw (N1)--(N2)--(N3)--(N4)--(N5)--(N6)--(N7)--(N8);
      \draw[double,postaction={decorate}] (N9) -- (N8);
      \nodeN{8}{4n+1}{8,0}{\rho}{black}
      \nodeC{9}{4n+2}{9,0}{ }{black}
    \end{tikzpicture}}\\
    \scalebox{\sclbx}{
    \begin{tikzpicture}[scale=\tpscale,decoration={markings,mark=at position 0.7 with {\arrow{>}}}]
      \node (A) at (-1,0) {$C_{4n+3}$};
      \nodeN{0}{0}{0,0}{\rho}{black}
      \nodeC{1}{1}{1,0}{ }{black}
      \nodeN{2}{2}{2,0}{\rho}{black}
      \nodeC{3}{3}{3,0}{ }{black}
      \nodeN{4}{4}{4,0}{\rho}{black}
      \nodeD{5}{5}{5,0}{ }{black}
      \nodeN{6}{4n}{6,0}{\rho}{black}
      \nodeC{7}{4n+1}{7,0}{ }{black}
      \nodeN{8}{4n+2}{8,0}{\rho}{black}
      \nodeC{9}{4n+3}{9,0}{ }{black}
      \draw (N1)--(N2)--(N3)--(N4)--(N5)--(N6)--(N7)--(N8);
      \draw[double,postaction={decorate}] (N9) -- (N8);
      \nodeN{8}{4n+2}{8,0}{\rho}{black}
      \nodeC{9}{4n+3}{9,0}{ }{black}
    \end{tikzpicture}}\\\hline
  \scalebox{\sclbx}{
    \begin{tikzpicture}[scale=\tpscale,decoration={markings,mark=at position 0.7 with {\arrow{>}}}]
      \node (A) at (-1,0) {$D_{4n}$};
      \nodeN{0}{0}{0,0}{\rho}{black}
      \nodeC{1}{1}{1,0}{ }{black}
      \nodeN{2}{2}{2,0}{\rho}{black}
      \nodeC{3}{3}{3,0}{ }{black}
      \nodeN{4}{4}{4,0}{\rho}{black}
      \nodeD{5}{5}{5,0}{ }{black}
      \nodeC{6}{4n-3}{6,0}{ }{black}
      \nodeN{7}{4n-2}{7,0}{\rho }{black}
      \nodeC{8}{4n-1}{8,\dheight}{ }{black}
      \nodeC{9}{4n}{8,-\dheight}{ }{black}
      \draw (N1)--(N2)--(N3)--(N4)--(N5)--(N6)--(N7)--(N8);
      \draw (N7)--(N9);
    \end{tikzpicture}}\\
\scalebox{\sclbx}{
    \begin{tikzpicture}[scale=\tpscale,decoration={markings,mark=at position 0.7 with {\arrow{>}}}]
      \node (A) at (-1,0) {$D_{4n+1}$};
      \nodeN{0}{0}{0,0}{\rho}{black}
      \nodeC{1}{1}{1,0}{ }{black}
      \nodeN{2}{2}{2,0}{\rho}{black}
      \nodeC{3}{3}{3,0}{ }{black}
      \nodeN{4}{4}{4,0}{\rho}{black}
      \nodeD{5}{5}{5,0}{ }{black}
      \nodeN{6}{4n-2}{6,0}{\rho}{black}
      \nodeC{7}{4n-1}{7,0}{ }{black}
      \nodeC{8}{4n}{8,\dheight}{ }{black}
      \nodeC{9}{4n+1}{8,-\dheight}{ }{black}
      \draw (N1)--(N2)--(N3)--(N4)--(N5)--(N6)--(N7)--(N8);
      \draw (N7)--(N9);
    \end{tikzpicture}}\\
\scalebox{\sclbx}{
    \begin{tikzpicture}[scale=\tpscale,decoration={markings,mark=at position 0.7 with {\arrow{>}}}]
      \node (A) at (-1,0) {$D_{4n+2}$};
      \nodeN{0}{0}{0,0}{ }{red}
      \nodeN{1}{1}{1,0}{\rho}{black}
      \nodeC{2}{2}{2,0}{ }{black}
      \nodeN{3}{3}{3,0}{\rho}{black}
      \nodeC{4}{4}{4,0}{ }{black}
      \nodeD{5}{5}{5,0}{ }{black}
      \nodeN{6}{4n-1}{6,0}{\rho}{black}
      \nodeC{7}{4n}{7,0}{ }{black}
      \nodeC{8}{4n+1}{8,\dheight}{ }{black}
      \nodeC{9}{4n+2}{8,-\dheight}{ }{black}
      \draw (N1)--(N2)--(N3)--(N4)--(N5)--(N6)--(N7)--(N8);
      \draw (N7)--(N9);
    \end{tikzpicture}}\\
\scalebox{\sclbx}{
    \begin{tikzpicture}[scale=\tpscale,decoration={markings,mark=at position 0.7 with {\arrow{>}}}]
      \node (A) at (-1,0) {$D_{4n+3}$};
      \nodeC{0}{0}{0,0}{ }{black}
      \nodeN{1}{1}{1,0}{\rho}{black}
      \nodeC{2}{2}{2,0}{ }{black}
      \nodeN{3}{3}{3,0}{\rho}{black}
      \nodeC{4}{4}{4,0}{ }{black}
      \nodeD{5}{5}{5,0}{ }{black}
      \nodeC{6}{4n}{6,0}{ }{black}
      \nodeN{7}{4n+1}{7,0}{\rho}{black}
      \nodeC{8}{4n+2}{8,\dheight}{ }{black}
      \nodeC{9}{4n+3}{8,-\dheight}{ }{black}
      \draw (N1)--(N2)--(N3)--(N4)--(N5)--(N6)--(N7)--(N8);
      \draw (N7)--(N9);
    \end{tikzpicture}}\\\hline
\begin{tabular}{c|c}
\scalebox{\sclbx}{
    \begin{tikzpicture}[scale=\tpscale,decoration={markings,mark=at position 0.7 with {\arrow{>}}}]
      \node (A) at (-1,0) {$E_{6}$};
      \nodeN{0}{0}{0,0}{\rho}{black}
      \nodeC{1}{1}{1,0}{ }{black}
      \nodeC{3}{3}{2,0}{ }{black}
      \nodeC{5}{5}{4,0}{ }{black}
      \nodeC{6}{6}{5,0}{ }{black}
      
      \nodeN{4}{4}{3,0}{ }{black}
\def\sw{west}
\def\ne{east}
      \nodeN{2}{2}{3,0.5}{\rho}{black}
\def\sw{south west}
\def\ne{north east}
      \nodeN{4}{ }{3,0}{\rho}{black}
\def\sw{south}
\def\ne{north}
      \draw (N1)--(N3)--(N4)--(N5)--(N6);
      \draw (N4)--(N2);
      \draw[white] (6,0)--(6.2,0);
    \end{tikzpicture}} &

\scalebox{\sclbx}{
\begin{tikzpicture}[scale=\tpscale,decoration={markings,mark=at position 0.7 with {\arrow{>}}}]
      \node (A) at (-1,0) {$F_4$};
      \nodeN{0}{0}{0,0}{\rho }{black}
      \nodeN{1}{1}{1,0}{\rho }{black}
      \nodeN{2}{2}{2,0}{\rho }{black}
      \nodeN{3}{3}{3,0}{\rho }{black}
      \nodeN{4}{4}{4,0}{\rho }{black}
      \draw (N1)--(N2);
      \draw (N3)--(N4);
      \draw[double,postaction={decorate}] (N2) -- (N3);
      \nodeN{2}{2}{2,0}{\rho }{black}
      \nodeN{3}{3}{3,0}{\rho }{black}
    \end{tikzpicture}} \\\cline{2-2}
  \scalebox{\sclbx}{
    \begin{tikzpicture}[scale=\tpscale,decoration={markings,mark=at position 0.7 with {\arrow{>}}}]
      \node (A) at (-1,0) {$E_{7}$};
      \nodeC{0}{0}{0,0}{ }{black}
      \nodeC{1}{1}{1,0}{ }{black}
      \nodeC{3}{3}{2,0}{ }{black}
      \nodeN{5}{5}{4,0}{\rho}{black}
      \nodeC{6}{6}{5,0}{ }{black}
      \nodeN{7}{7}{6,0}{\rho}{black}
      
      \nodeC{4}{4}{3,0}{ }{black}
\def\sw{west}
\def\ne{east}
      \nodeN{2}{2}{3,0.5}{\rho}{black}
\def\sw{south}
\def\ne{north}
      \draw (N1)--(N3)--(N4)--(N5)--(N6)--(N7);
      \draw (N4)--(N2);
    \end{tikzpicture}} &
\scalebox{\sclbx}{
\begin{tikzpicture}[scale=\tpscale,decoration={markings,mark=at position 0.7 with {\arrow{>}}}]
      \node (A) at (-1,0) {$G_2$};
      \nodeN{0}{0}{0,0}{\rho }{black}
      \nodeC{1}{1}{1,0}{\rho }{green!\shadeofgrey!black}
      \nodeC{2}{2}{2,0}{\rho }{green!\shadeofgrey!black}
      \draw[double distance=1.5pt,postaction={decorate}] (N2) -- (N1);
      \draw (N1) -- (N2);
      \nodeC{1}{1}{1,0}{\rho }{green!\shadeofgrey!black}
      \nodeC{2}{2}{2,0}{\rho }{green!\shadeofgrey!black}
    \end{tikzpicture}} \\\cline{2-2}
\end{tabular}\\
\scalebox{\sclbx}{
    \begin{tikzpicture}[scale=\tpscale,decoration={markings,mark=at position 0.7 with {\arrow{>}}}]
      \node (A) at (-1,0) {$E_{8}$};
      \nodeN{0}{0}{0,0}{\rho}{black}
      \nodeN{1}{1}{1,0}{\rho}{black}
      \nodeN{3}{3}{2,0}{\rho}{black}
      \nodeN{5}{5}{4,0}{\rho}{black}
      \nodeN{6}{6}{5,0}{\rho}{black}
      \nodeN{7}{7}{6,0}{\rho}{black}
      \nodeN{8}{8}{7,0}{\rho}{black}
      
      \nodeN{4}{4}{3,0}{ }{black}
\def\sw{west}
\def\ne{east}
      \nodeN{2}{2}{3,0.5}{\rho}{black}
\def\sw{south west}
\def\ne{north east}
      \nodeN{4}{ }{3,0}{\rho}{black}
\def\sw{south}
\def\ne{north}
      \draw (N1)--(N3)--(N4)--(N5)--(N6)--(N7)--(N8);
      \draw (N4)--(N2);
      \draw[white] (8,0) -- (12.2,0);
    \end{tikzpicture}}\\\hline
 \end{tabular}}
\caption{Confluence of central-firing from weights in $\Omega\cup\{0\}$. A vertex corresponding to $0$, resp., $\omega_i$ is labeled by $0$, resp., $i$ (as in Figure~\ref{fig:dynkinclassification}). Weights from which central-firing is confluent correspond to filled vertices with boldface labels. If~$\omega \in Q+\rho$ then the corresponding vertex is marked by $\rho$. If~$\omega\notin Q+\rho$ but central-firing is still not confluent from~$\omega$ then it is colored red. If~$\omega \in Q+\rho$ but central-firing is confluent from~$\omega$ then it is colored green.}\label{tab:central_firing}
\end{table}

\begin{prop}
Suppose that $\Phi$ is simply laced. Let $\lambda \in P$ be a dominant weight that belongs to $\Pi^Q(\rho)$ but is not equal to $\rho$. Then~$\raPP$ is not confluent from $\lambda$.
\end{prop}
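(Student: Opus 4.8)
The plan is to exhibit two complete $\raPP$-firing sequences starting from $\lambda$ that terminate at two \emph{distinct} $\raPP$-stable weights; since both endpoints are already stable, no common weight can be reached from both, so confluence from $\lambda$ fails. The fact I will use throughout is that a weight $\mu$ is $\raPP$-stable precisely when it is \emph{regular} (no root is orthogonal to it), and that the whole orbit $\W\rho$ consists of regular weights because $\rho$ is strictly dominant. In particular both $\rho$ itself and every $s_\beta\rho = \rho-\langle\rho,\beta^\vee\rangle\beta$ (with $\beta\in\Phi^+$) are stable, and $s_\beta\rho\neq\rho$. So it suffices to reach $\rho$ along one sequence and some $s_\beta\rho$ along another.

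First I would produce a sequence ending at $\rho$. Since $\lambda\in\Pi^{Q}(\rho)=\Pi^{Q}(\rho+0)$ and $0\in\Omega^0_m$, Proposition~\ref{prop:unlabeled_stabilization} shows that $\W\rho$ is the $\raPP$-stabilization of $\W\lambda$; combining this with Proposition~\ref{prop:simply_laced_dominant} (exactly as in the proof of Proposition~\ref{prop:span_pi_rho}) yields a sequence $\lambda=\lambda_0\raPP\lambda_1\raPP\cdots\raPP\lambda_t=\rho$ in which every $\lambda_s$ is dominant, with $t\geq 1$ since $\lambda\neq\rho$. Moreover, by the proof of Proposition~\ref{prop:simply_laced_dominant}, each step has the form $\lambda_{s+1}=\lambda_s+\theta'_s$, where $\theta'_s$ is the highest root of an irreducible component $\Phi'_s$ of the parabolic $\Phi_{I^0_{\lambda_s}}$.

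The crux is the last step $\lambda_{t-1}\raPP\lambda_{t-1}+\theta'=\rho$, with $\theta':=\theta'_{t-1}$ the highest root of a component $\Phi'$ of $\Phi_{I^0_{\lambda_{t-1}}}$. I claim $\Phi'$ has rank at least $2$: otherwise $\theta'=\alpha_j$ is a single simple root and $\lambda_{t-1}=\rho-\alpha_j$, which is not dominant because $\langle\rho-\alpha_j,\alpha_j^\vee\rangle=1-2=-1$, contradicting dominance of $\lambda_{t-1}$. Since $\Phi'$ is irreducible, simply laced, and of rank $\geq 2$, its affine node is joined by a single edge to some simple root $\beta$ of $\Phi'$, so Lemma~\ref{lem:affinehighest} gives $\langle\theta',\beta^\vee\rangle=1$; note $\beta=\alpha_j$ is a genuine simple root of $\Phi$, as the simple roots of the parabolic $\Phi'$ lie in $\Delta$. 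Then $\theta'-\beta=s_\beta\theta'$ is again a root; it is positive (every coefficient of the highest root $\theta'$ in the simple roots of $\Phi'$ is $\geq 1$, so subtracting $\beta$ leaves all coefficients nonnegative) and orthogonal to $\lambda_{t-1}$ (it lies in $\Phi'\subseteq\lambda_{t-1}^{\perp}$). Hence $\theta'-\beta$ is fireable, giving $\lambda_{t-1}\raPP\lambda_{t-1}+(\theta'-\beta)=\rho-\beta=s_\beta\rho$. This produces the second sequence $\lambda\raPPAst\lambda_{t-1}\raPP s_\beta\rho$, terminating at the stable weight $s_\beta\rho\neq\rho$, which finishes the proof.

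The step I expect to require the most care is the rank argument for the final move: showing that the last dominant firing necessarily adds the highest root of a rank-$\geq 2$ component (so that the alternative positive root $\theta'-\beta$ exists at all). Once that is in place, everything reduces to the short identity $s_\beta\theta'=\theta'-\beta$ together with the standard facts that $\raPP$-stable is equivalent to regular and that highest-root coefficients are positive.
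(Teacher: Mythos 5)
Your proof is correct and follows essentially the same route as the paper's: both build a firing sequence $\lambda=\lambda_0\raPP\cdots\raPP\lambda_t=\rho$ through dominant weights via Propositions~\ref{prop:unlabeled_stabilization} and~\ref{prop:simply_laced_dominant}, and then redirect the final step to reach a second stable weight lying in the orbit $\W\rho$. The only real difference is how the alternative last move is produced. The paper does it with a short height computation: since $\<\lambda_{t-1},\alpha^\vee\>=0$ for the last fired root $\alpha$, one gets $\<\rho,\alpha^\vee\>=2$, which in the simply laced case forces $\alpha$ to have height two, so $\alpha=\alpha_i+\alpha_j$ with $i,j$ adjacent; then $\<\lambda_{t-1},\alpha_i^\vee\>=1-2+1=0$, so one may fire $\alpha_i$ instead and land at $\rho-\alpha_j=s_{\alpha_j}(\rho)$, which is stable. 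You instead identify the last fired root as the highest root $\theta'$ of a rank-$\geq 2$ irreducible component $\Phi'$ of the parabolic $\Phi_{I^0_{\lambda_{t-1}}}$ (legitimate, since the dominant lifts produced by the proof of Proposition~\ref{prop:simply_laced_dominant} have exactly this form, as is also noted in the proof of Theorem~\ref{thm:abelian}), and then use the affine Dynkin diagram via Lemma~\ref{lem:affinehighest} to find a simple root $\beta$ of $\Phi'$ with $\<\theta',\beta^\vee\>=1$, firing $s_\beta\theta'=\theta'-\beta$ to land at $s_\beta(\rho)$. Both arguments are sound, and in fact they produce the same moves: the paper's computation shows your component $\Phi'$ is necessarily of type $A_2$, with $\theta'=\alpha_i+\alpha_j$ and $\beta\in\{\alpha_i,\alpha_j\}$, so $\theta'-\beta$ is the other simple root. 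The paper's version is leaner (no appeal to affine diagrams or to the component structure of the final step), while yours trades that for a more structural explanation of why the branching move must exist; your rank-$\geq 2$ argument and the positivity check for $\theta'-\beta$ are both correct as written.
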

\begin{proof}
  We know from Propositions~\ref{prop:unlabeled_stabilization} and~\ref{prop:simply_laced_dominant}  that there exists a firing sequence 
  \[\lambda=\lambda_0\raPP\lambda_1\raPP\dots\raPP\lambda_t\raPP\lambda_{t+1}:=\rho\]
  such that for each $0\leq s\leq t+1$, $\lambda_s$ is a dominant weight. Let $\alpha\in\Phi^+$ be such that $\lambda_t+\alpha=\rho$. In particular, we have $\<\lambda_t,\alpha^\vee\>=0$ and thus $\<\rho,\alpha^\vee\>=2$. Write $\alpha$ in the basis of simple roots:
  \[\alpha=\sum_{i=1}^{n}a_i\alpha_i.\]
  Since $\rho$ is the sum of the fundamental weights, we get $\sum_{i=1}^{n}a_i=2$ (note that this conclusion uses the fact that $\Phi$ is simply laced). Since $2\alpha_i\notin\Phi$, we get that $\alpha=\alpha_i+\alpha_j$ for some $i\neq j\in [n]$. Moreover, it must be the case that $i$ and $j$ are connected by an edge in the Dynkin diagram $\DD$ of $\Phi$ because otherwise $\alpha_i+\alpha_j$ would not be a root. Thus $\<\alpha_i,\alpha_j^\vee\>= -1$. Let us now consider the weight $\rho=\lambda_t+\alpha_i$. We claim that $\rho$ is $\raPP$-stable and that $\<\lambda_t,\alpha_i^\vee\>=0$, that is, $\lambda_t\raPP \rho$. Indeed, we have
  \[\<\lambda_t,\alpha_i^\vee\>=\<\rho-\alpha_i-\alpha_j,\alpha_i^\vee\>=1-2+1=0.\]
  Thus  $\lambda_t\raPP \rho$. On the other hand, $\rho=\rho-\alpha_j$ is a vertex of $\Pi^Q(\rho)$:
  \[s_{\alpha_j}(\rho)=\rho-\<\rho,\alpha_j^\vee\>\alpha_j=\rho.\]
  In particular, it is $\raPP$-stable.
\end{proof}

This proposition immediately implies some parts of Conjecture~\ref{conj:master_central}:

\begin{cor}\label{cor:simply_laced_not_confluent}
Suppose $\Phi$ is simply laced and of rank greater than one. Let $\omega\in\Omega\cup\{0\}$ be such that $\omega \in Q+\rho$. Then central-firing is not confluent from~$\omega$.
\end{cor}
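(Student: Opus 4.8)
The plan is to derive the corollary directly from the preceding proposition, which already establishes non-confluence from any dominant weight that lies in $\Pi^Q(\rho)$ and differs from $\rho$. So it suffices to verify that every $\omega\in\Omega\cup\{0\}$ with $\omega\in Q+\rho$ satisfies all three hypotheses of that proposition: $\omega$ is dominant, $\omega\in\Pi^Q(\rho)$, and $\omega\neq\rho$. Dominance is immediate, since each fundamental weight satisfies $\langle\omega_k,\alpha_i^\vee\rangle=\delta_{ik}\geq 0$ and $0$ is trivially dominant.

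The main step is to check $\omega\in\Pi^Q(\rho)$, for which I would invoke Lemma~\ref{lemma:permcontainment} with $\mu=\omega$ and $\mu'=\rho$. That lemma reduces the task to showing that $\rho-\omega$ is a nonnegative integer combination of simple roots. The integrality part is exactly the hypothesis $\omega\in Q+\rho$, which says $\rho-\omega\in Q$, i.e.\ $\rho-\omega$ has integer coordinates in the (basis) expansion in simple roots. For nonnegativity I would first note that $\rho-\omega$ is itself dominant: one computes $\langle\rho-\omega_k,\alpha_i^\vee\rangle=1-\delta_{ik}\geq 0$, and $\langle\rho,\alpha_i^\vee\rangle=1>0$ handles the case $\omega=0$. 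A dominant weight that lies in $Q$ is automatically a \emph{nonnegative} integer combination of the simple roots: its unique expansion in the basis $\Delta$ has nonnegative coefficients because every dominant weight lies in the cone spanned by the simple roots (the inverse of the Cartan matrix has nonnegative entries in finite type), and those coefficients are integers precisely because the weight belongs to the integer span of $\Delta$, which is $Q$. Combining these, $\rho-\omega$ is a nonnegative integer combination of simple roots, and Lemma~\ref{lemma:permcontainment} gives $\omega\in\Pi^Q(\rho)$.

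Finally, $\omega\neq\rho$ follows from the rank hypothesis. Since $\Phi$ has rank greater than one, $\rho=\sum_i\omega_i$ is strictly dominant, with $\langle\rho,\alpha_i^\vee\rangle=1$ for every $i$; on the other hand any $\omega\in\Omega\cup\{0\}$ has $\langle\omega,\alpha_i^\vee\rangle=0$ for at least one index $i$ (for $\omega=\omega_k$ this uses that some $i\neq k$ exists, which is where rank $>1$ enters, and for $\omega=0$ it is clear). Hence $\omega$ cannot equal $\rho$. With all three conditions in hand, the preceding proposition yields that $\raPP$ is not confluent from $\omega$.

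I do not anticipate any serious obstacle: the corollary is essentially a matter of checking the hypotheses of the proposition. The only point needing a little care is the middle step—promoting ``dominant and in $Q$'' to ``nonnegative integer combination of simple roots''—but this is a routine consequence of the nonnegativity of the inverse Cartan matrix together with the uniqueness of the simple-root expansion, so it should not require any new input.
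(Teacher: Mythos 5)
Your proof is correct and follows exactly the route the paper intends: the paper states that the preceding proposition ``immediately implies'' the corollary, and your argument simply fills in that routine verification (dominance, membership in $\Pi^Q(\rho)$ via Lemma~\ref{lemma:permcontainment} together with the nonnegativity of the inverse Cartan matrix, and $\omega\neq\rho$ from the rank hypothesis). No gaps; the middle step promoting ``dominant and in $Q$'' to ``nonnegative integer combination of simple roots'' is handled correctly.
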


Note that in Types $B_2$ and $G_2$, the result of this corollary is false (and assuming Conjecture~\ref{conj:master_central}, it is false for $B_n$ for all $n\geq 2$), so the simply laced requirement is necessary. On the other hand, the result of this corollary still appears to hold for $C_n$ and holds for $F_4$.

For root systems of small rank, we have verified the conjecture using a computer (in fact, the computation finishes in a reasonable amount of time).

\begin{prop}\label{prop:comp}
Conjecture~\ref{conj:master_central} holds for all root systems $\Phi$ of rank at most $8$.
\end{prop}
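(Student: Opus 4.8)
The plan is to verify Conjecture~\ref{conj:master_central} for each of the finitely many irreducible root systems of rank at most $8$ by a direct, exhaustive computer search. The key reduction that makes this feasible is the following observation. Since $\raPP$ is terminating on $P$ by Proposition~\ref{prop:centraltermination}, confluence from a weight $\omega$ is equivalent to the existence of a \emph{unique} $\raPP$-stable weight reachable from $\omega$: if there is a unique reachable stable weight $z$, then any $y$ with $\omega\raPPAst y$ can be fired further (by termination) to a stable weight, which is reachable from $\omega$ and hence equals $z$, giving the required common descendant; conversely confluence plus termination always yields a unique stabilization. Moreover a weight $\lambda$ is $\raPP$-stable precisely when there is no $\alpha\in\Phi^+$ with $\<\lambda,\alpha^\vee\>=0$, i.e.\ when $\lambda$ is a regular weight (lies in the interior of some Weyl chamber). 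Thus for each $\omega\in\Omega\cup\{0\}$ it suffices to enumerate the set of weights reachable from $\omega$ under $\raPPAst$ and count how many of them are regular: confluence from $\omega$ holds if and only if this count is exactly $1$.

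First I would justify that the reachable set is finite, so that a breadth-first search terminates. This is immediate from the monovariant $\phi(\lambda)=\<2\rho-\lambda,2\rho-\lambda\>$ used in the proof of Proposition~\ref{prop:centraltermination}: every firing move strictly decreases $\phi$, so every weight reachable from $\omega$ satisfies $\phi(\lambda)\le\phi(\omega)$ and therefore lies in a fixed ball about $2\rho$, which contains only finitely many lattice points of $P$. For the bookkeeping I would represent a weight $\lambda$ by its integer coordinate vector $\big(\<\lambda,\alpha_1^\vee\>,\dots,\<\lambda,\alpha_n^\vee\>\big)$. The firing condition $\<\lambda,\alpha^\vee\>=0$ and the update $\lambda\mapsto\lambda+\alpha$ are then expressed as integer-linear operations via the Cartan matrix once the positive roots $\Phi^+$ have been listed in the basis of simple roots; for the classical types one can equally well use the explicit realizations of Section~\ref{sec:root-syst-class}, and for $F_4,G_2,E_6,E_7,E_8$ one uses the standard root-system data. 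The search then maintains a visited set (hashed on the coordinate vector) and, from each weight, branches over all $\alpha\in\Phi^+$ orthogonal to it.

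The algorithm is then: for each irreducible $\Phi$ of rank $\le 8$ and each $\omega\in\Omega\cup\{0\}$, run the search from $\omega$, record the regular weights encountered, and declare $\raPP$ confluent from $\omega$ exactly when a single regular weight is found; finally compare the resulting list against the predictions encoded in Table~\ref{tab:central_firing}. A useful sanity-and-efficiency check comes from Corollary~\ref{cor:unlabeled_confluent}: the induced relation on $P/W$ is confluent, so there is a \emph{unique} reachable stable $W$-orbit, and the computation above is really just counting how many chambers (i.e.\ $W$-translates) of that one orbit are actually reached from $\omega$ in $P$. One can cross-check the implementation on the worked examples, e.g.\ Type $A_3$ from $0$ in Figure~\ref{fig:side_by_side}, before trusting the larger runs.

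I expect the main obstacle to be purely computational: controlling the size of the reachable sets, and hence the time and memory of the search, especially for the large rank-$8$ systems such as $E_8$ (with $120$ positive roots) and for the highly branching cases near $0$ and $\rho$. Two mitigations are available. For the simply laced types one can replace blind branching over all orthogonal positive roots by the \emph{abelian} Dynkin-diagram description of Theorem~\ref{thm:abelian}, which collapses each firing to a single canonical move into the dominant chamber and makes the per-step work cheap; the abelian property also lets one prune redundant orderings of commuting moves. For the non-simply-laced types one must carefully handle the distinction between roots and coroots (and the two highest-root / highest-short-root moves), but the search is otherwise identical. With these reductions the enumeration is finite and, as asserted, completes in a reasonable amount of time, at which point the agreement with Conjecture~\ref{conj:master_central} is read off directly.
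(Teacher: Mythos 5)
Your core verification scheme is correct and is essentially the paper's own proof: the paper gives no more detail than the remark that the conjecture ``has been verified using a computer'' for all ranks up to $8$, and your reductions supply exactly the missing rigor. In particular, the equivalence (given termination, from Proposition~\ref{prop:centraltermination}) between confluence from $\omega$ and uniqueness of the reachable $\raPP$-stable weight, the identification of stable weights with regular ones, and the finiteness of the reachable set via the monovariant $\phi(\lambda)=\<2\rho-\lambda,2\rho-\lambda\>$ are all sound, and a breadth-first search on $P$ from each $\omega\in\Omega\cup\{0\}$ compared against Table~\ref{tab:central_firing} is precisely the computation the proposition asserts was done.

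However, one of your proposed ``mitigations'' is wrong and, if actually implemented, would silently invalidate the verification for all simply laced types (i.e., for Types A, D, E -- most of the content of the conjecture). Theorem~\ref{thm:abelian} describes \emph{unlabeled} central-firing, the relation $\raPP$ on $P/W$, and that relation is \emph{always} confluent by Corollary~\ref{cor:unlabeled_confluent}. Conjecture~\ref{conj:master_central} is about the relation on $P$ itself, and its failure of confluence is exactly the phenomenon of reaching two distinct $W$-translates of the unique stable orbit; collapsing each firing ``to a single canonical move into the dominant chamber'' computes the unlabeled process and would therefore report confluence from every $\omega$, contradicting the very cases the conjecture singles out as non-confluent. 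Relatedly, your ``abelian pruning'' needs care even where it applies: two positive roots fireable at $\lambda$ yield commuting moves on $P$ only when they are orthogonal to each other, and even then a canonical-order reduction can skip intermediate weights from which further roots become fireable, so reachable stable weights can be missed. Keep the plain exhaustive search on $P$ (it is what the paper's proof amounts to); use Corollary~\ref{cor:unlabeled_confluent} and the worked examples only as consistency checks, not as a substitute for the branching.
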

This includes all root systems of exceptional types.

Let us also mention some results of Hopkins-McConville-Propp~\cite{hopkins2017sorting}: 

\begin{thm}[\cite{hopkins2017sorting}] \label{thm:centralconfab}
  Conjecture~\ref{conj:master_central} is true for $\omega=0$ when $\Phi=A_n$ or~$B_n$.
\end{thm}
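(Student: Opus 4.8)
The plan is to match the three predictions of Conjecture~\ref{conj:master_central} at $\omega=0$ and prove each in turn: central-firing is confluent from $0$ for $\Phi=A_{2m-1}$ and for $\Phi=B_n$, and is \emph{not} confluent from $0$ for $\Phi=A_{2m}$. First one checks the bookkeeping that these are the predictions. In $A_{2m}$ we have $\rho\in Q$, so $0\in Q+\rho$ and $0\notin\{\omega_m,\omega_{m+1}\}$, giving non-confluence; in $A_{2m-1}$ and in $B_n$ we have $0\notin Q+\rho$ (for $B_n$ because $\rho=(n-\tfrac12,\dots,\tfrac12)\in(\Z+\tfrac12)^n$ while $Q=\Z^n$, the roots $\pm e_i$ forcing $Q=\Z^n$), and neither is an exceptional case at $\omega=0$, giving confluence. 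The case $\Phi=A_{2m-1}$ is exactly the Type~$A_{2m-1}$ confluence theorem of Hopkins--McConville--Propp~\cite{hopkins2017sorting} quoted at the start of the paper, so nothing more is needed there.

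For the non-confluence of $A_{2m}$ (the $N=2m+1$-chip case) I would exploit the reflect-and-relabel involution $\iota$ on labeled configurations $v\in\Z^N$ given by $\iota(v)_i:=-v_{N+1-i}$. A direct check on move~\eqref{move:A} shows that $\iota$ commutes with central-firing and that $\iota(0)=0$, so the set of $\raPP$-stable configurations reachable from $0$ is $\iota$-invariant. Consequently, if the process were confluent its unique stabilization $w$ would satisfy $\iota(w)=w$, which forces the middle chip $m+1$ to sit at the origin. I then produce a reachable stable configuration that is \emph{not} $\iota$-fixed: keeping chip~$1$ at the origin, run the Type~$A_{2m-1}$ process on the remaining $2m$ chips $2,\dots,2m+1$ (these are legal $A_{2m}$ moves not involving chip~$1$). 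By the sorting theorem these terminate with chips $2,\dots,2m+1$ occupying $\{-m,\dots,-1,1,\dots,m\}$, i.e.\ with a gap exactly at the origin; appending chip~$1$ at $0$ yields a stable $w$ with chip~$1$ at the origin. Since $m\ge 1$, a stable configuration cannot have two chips at the origin, so $w$ is not $\iota$-fixed, and $w,\iota(w)$ are two distinct reachable stable configurations, contradicting confluence.

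For the confluence of $B_n$ from $0$ I would set up an unfolding identifying the $B_n$ chip process with the $A_{2n-1}$ process restricted to the $\iota$-fixed (``antisymmetric'') configurations. Send $v\in\Z^n$ to $u=(v_1,\dots,v_n,-v_n,\dots,-v_1)$, which satisfies $\iota(u)=u$. A case analysis shows that each $B_n$ move~\eqref{move:A},~\eqref{move:B},~\eqref{move:D} corresponds to a symmetry-preserving $A_{2n-1}$ firing (a mirror pair of type-A moves, or a single move on a self-mirror pair), and conversely every symmetry-preserving $A_{2n-1}$ firing from such a $u$ is one of these; moreover $v$ is $B_n$-stable if and only if $u$ is $A_{2n-1}$-stable (each says the $v_i$ are distinct, nonzero, and pairwise non-opposite). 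Since $0\mapsto 0$ and the $A_{2n-1}$-stabilization of $0$ (the sorted state with positions $\{-n,\dots,-1,1,\dots,n\}$) is itself $\iota$-fixed, the Type~$A_{2n-1}$ confluence of~\cite{hopkins2017sorting} forces every maximal $B_n$-firing sequence from $0$ to terminate at the single $B_n$-configuration corresponding to that sorted state; hence $\raPP$ is confluent from $0$ in $B_n$.

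Granting the Type~$A_{2m-1}$ sorting/confluence theorem of~\cite{hopkins2017sorting} as a black box (this is the genuinely hard input), the remaining obstacles are bookkeeping. For $A_{2m}$ the delicate point is verifying that the constructed sequence really terminates at a stable configuration with chip~$1$ at the origin, and that chip~$1$ cannot coincide with the middle chip for $m\ge 1$. For $B_n$ the delicate point is the exhaustive verification that the unfolding matches moves and preserves stability \emph{exactly}, especially at the origin where moves~\eqref{move:A} and~\eqref{move:B} both occur; in particular one must confirm that ``no symmetry-preserving $A$-move is available'' coincides with ``$A$-stable'' (via the stability equivalence $B$-stable $\Leftrightarrow$ $A$-stable) rather than merely ``symmetric-maximal,'' so that $A_{2n-1}$-confluence can be applied to terminal $B$-states.
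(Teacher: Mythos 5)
The paper offers no internal proof of Theorem~\ref{thm:centralconfab}: it is quoted wholesale from~\cite{hopkins2017sorting}. So there is no line-by-line argument to compare against, but your reconstruction is correct in substance, and it is worth noting how it relates to machinery the paper develops elsewhere for other purposes. Your bookkeeping of what Conjecture~\ref{conj:master_central} predicts at $\omega=0$ is right. Your Type~$B_n$ argument is a chip-level instance of the paper's folding result, Proposition~\ref{prop:folding}: the embedding $v\mapsto(v_1,\dots,v_n,-v_n,\dots,-v_1)$ realizes each of the moves~\eqref{move:A},~\eqref{move:B},~\eqref{move:D} as one or two mirror-symmetric Type~A firings, and the subtlety you flag --- that a $B$-stable state must map to an $A$-stable state rather than a merely ``symmetric-maximal'' one --- is exactly the step in the proof of Proposition~\ref{prop:folding} where a $\raPPprime$-stable weight reachable from $\l$ is shown to be $\raPP$-stable; the first bullet after that proposition records precisely this application ($A_{2n-1}$ confluent from $0$ implies $B_n$ confluent from $0$), and the paper remarks this is essentially how~\cite{hopkins2017sorting} handled Type~B. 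Your Type~$A_{2m}$ non-confluence argument, by contrast, is genuinely different from the paper's own route: the paper gets that case from Corollary~\ref{cor:simply_laced_not_confluent} (simply laced and $\omega\in Q+\rho$ imply non-confluence), whose proof exhibits two distinct stable vertices $\rho$ and $\rho-\alpha_j$ of $\Pi(\rho)$ reachable through dominant weights via Propositions~\ref{prop:unlabeled_stabilization} and~\ref{prop:simply_laced_dominant}. Your mirror-involution construction is more elementary and stays entirely at the level of chips, at the cost of invoking the sorting theorem of~\cite{hopkins2017sorting} a second time (on the chips $2,\dots,2m+1$), whereas the paper's corollary is self-contained.

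One point in your $A_{2m}$ argument needs repair. In Type~A the weights live in $\Z^N/\langle(1,\dots,1)\rangle$, so ``two distinct reachable stable configurations'' must mean distinct modulo translation. The involution $\iota$ does descend to this quotient (since $\iota(v+c(1,\dots,1))=\iota(v)-c(1,\dots,1)$), but your verification that $w\neq\iota(w)$ --- ``a stable configuration cannot have two chips at the origin'' --- only rules out equality on the nose, and similarly the claim that confluence forces the middle chip to the origin ignores a possible translation. The fix is immediate: $w$ and $\iota(w)$ occupy the same set of positions $\{-m,\dots,m\}$, so if they agreed as weights the translation constant would have to be $0$; but chip~$1$ sits at position $0$ in $w$ and at position $m\neq 0$ in $\iota(w)$. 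With that patch, and granting the Type~$A_{2m-1}$ theorem of~\cite{hopkins2017sorting} as the black box (exactly the role it plays for the paper), your proof is complete.
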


Actually, when $\Phi$ is of Type $B_n$, it is easy to see that for any $\omega \in \Omega\setminus \{\omega_n\}$, we have that~$0 \raPPAst \omega$. So Theorem~\ref{thm:centralconfab} implies almost all cases of Conjecture~\ref{conj:master_central} for Type~B:

\begin{cor}
  Conjecture~\ref{conj:master_central} is true for $\omega\in\Omega\setminus\{\omega_n\}$ when $\Phi = B_n$.
\end{cor}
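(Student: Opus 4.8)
The plan is to exploit the elementary rewriting principle that \emph{confluence propagates forward}: if a relation $\ra$ on a set $X$ is confluent from some $x\in X$ and $x\raAst y$, then $\ra$ is confluent from $y$. This follows in one line from the transitivity of $\raAst$: given $y\raAst w_1$ and $y\raAst w_2$, we also have $x\raAst w_1$ and $x\raAst w_2$, so confluence from $x$ supplies a common $z$ with $w_1\raAst z$ and $w_2\raAst z$, whence $\ra$ is confluent from $y$. I would record this observation first, since it is the only structural ingredient needed.

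Next I would pin down the relevant $P/Q$ arithmetic in Type $B_n$ using the explicit realization from Section~\ref{sec:root-syst-class}. There $Q=\Z^n$, while for $1\le i\le n-1$ one has $\omega_i=e_1+\dots+e_i\in\Z^n=Q$ and the Weyl vector is $\rho=(n-\tfrac12,\,n-\tfrac32,\,\dots,\,\tfrac12)\in(\Z+\tfrac12)^n$. In particular $\rho\notin Q$, so $0\notin Q+\rho$, and by Theorem~\ref{thm:centralconfab} (the $\omega=0$ case of Conjecture~\ref{conj:master_central} for $\Phi=B_n$) the relation $\raPP$ is confluent from $0$. Moreover, since each $\omega\in\Omega\setminus\{\omega_n\}$ lies in $Q$, we have $\omega\notin Q+\rho$, so the prediction of Conjecture~\ref{conj:master_central} for every such $\omega$ is precisely that $\raPP$ \emph{is} confluent from $\omega$; it therefore remains only to verify that confluence (the converse implication of the ``if and only if'' is vacuous here, as no $\omega\in\Omega\setminus\{\omega_n\}$ meets $Q+\rho$).

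To finish I would invoke the (already granted) reachability $0\raPPAst\omega$ for each $\omega\in\Omega\setminus\{\omega_n\}$. Concretely, in the chip picture of Section~\ref{sec:labeled-chip-firing-B-C-D} this amounts to applying the Type~$B$ move~\eqref{move:B} (firing the positive root $e_j$) once to each of the chips $1,\dots,i$ in turn, carrying them one at a time from the origin to position $1$ and producing exactly the configuration for $\omega_i$. Combining $0\raPPAst\omega$ with confluence of $\raPP$ from $0$ and the forward-propagation lemma then yields confluence of $\raPP$ from $\omega$, matching the conjectural prediction and completing the proof.

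Since the argument is just the combination of Theorem~\ref{thm:centralconfab}, a one-line lattice computation, and the trivial reachability $0\raPPAst\omega$, there is no genuine obstacle; the only point requiring minor care is the bookkeeping confirming that every $\omega\in\Omega\setminus\{\omega_n\}$ lies in $Q$ (equivalently, outside $Q+\rho$, because $\rho\notin Q$), so that the conjecture's biconditional collapses to the single implication established above.
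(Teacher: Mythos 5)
Your proposal is correct and follows exactly the paper's argument: the paper likewise observes that $0 \raPPAst \omega$ for every $\omega\in\Omega\setminus\{\omega_n\}$ in Type $B_n$ (by firing the roots $e_1,\dots,e_i$ in turn) and then invokes Theorem~\ref{thm:centralconfab} (confluence from $0$) together with the forward-propagation of confluence along $\raPPAst$. Your write-up merely makes explicit the two details the paper leaves implicit --- the one-line propagation lemma and the verification that $\omega_i\in Q$ while $\rho\notin Q$, so the conjecture's biconditional reduces to proving confluence --- both of which you handle correctly.
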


As we have already mentioned in Remark~\ref{rmk:half_integers}, the confluence of central-firing from $\omega_n$ in Type $B_n$ is equivalent to the confluence of central-firing from $\omega_n$ in Type $D_n$. This case remains open.

When $\Phi=B_n$, we offer the following extension of Conjecture~\ref{conj:master_central} to a much wider class of weights:

\begin{conj} \label{conj:typeb}
Suppose that $\Phi=B_n$. Let $\lambda\in P$ be a connected, dominant weight. Then central-firing is confluent from~$\lambda$.
\end{conj}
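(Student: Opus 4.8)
The plan is to carry out the analysis entirely in the chip model of Section~\ref{sec:labeled-chip-firing-B-C-D}, where central-firing for $B_n$ acts on a labeled configuration $v=(v_1,\dots,v_n)$ (with $v_i$ the position of chip~$i$) through the moves~\eqref{move:A},~\eqref{move:B}, and~\eqref{move:D}, and to adapt the trajectory-tracking arguments of Hopkins--McConville--Propp~\cite{hopkins2017sorting} used to establish sorting in Type~A. The essential first step is a reduction supplied by Corollary~\ref{cor:unlabeled_confluent}: since the unlabeled relation $\raPP$ on $P/W$ is confluent and terminating, and since the labeled relation $\raPP$ on $P$ is terminating by Proposition~\ref{prop:centraltermination}, every maximal firing sequence from $\lambda$ ends at a $\raPP$-stable weight, and the $\raPP$-stable weights are exactly the regular ones (those $\mu$ with $\<\mu,\alpha^\vee\>\neq 0$ for all $\alpha\in\Phi$). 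By Corollary~\ref{cor:unlabeled_confluent} all such terminal weights lie in a single, $\lambda$-determined orbit $\W{\mu^\ast}$, where $\mu^\ast$ denotes its unique strictly dominant representative. As $W$ acts freely on regular weights and simply transitively on Weyl chambers, confluence from $\lambda$ is therefore \emph{equivalent} to showing that the element $w\mu^\ast\in\W{\mu^\ast}$ reached --- equivalently, the final assignment of labels and signs to the terminal positions $\{\pm\mu^\ast_1,\dots,\pm\mu^\ast_n\}$ --- is independent of the firing sequence.

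The second step is to force this assignment by tracking chip trajectories. Because $\lambda$ is dominant the chips begin sorted, $v_1\geq\cdots\geq v_n\geq 0$, and move~\eqref{move:A} applied to two chips sharing a position separates them so that the lower-indexed chip moves right; the plan is to isolate the invariant content of this --- a partial order on chips compatible with all three moves --- and to show, by induction on the (finite, by Proposition~\ref{prop:centraltermination}) number of remaining firings, that the relative left--right order and the relative signs of every pair of chips are eventually forced. This is the Type~B analogue of the sorting statement of~\cite{hopkins2017sorting}; carrying it out amounts to proving that the sorted terminal configuration is a confluent normal form.

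The role of the connectedness hypothesis is to glue this local rigidity into a global statement, and it is precisely what excludes the mechanism behind non-confluence in examples such as $A_{2n}$ from the origin. If the chips could be partitioned into subsets that never fire with one another, the relative placement of the subsets would be dictated by independent sub-processes and the terminal chamber could genuinely depend on choices. The hypothesis $\SpanPP(\lambda)=V$ forbids this: as in the Type~A interpretation recorded around Proposition~\ref{prop:span_A}, connectedness means the graph on $[n]$ recording which pairs of chips ever fire together is connected, so every two chips are linked by a chain of joint firings, and I would use such chains to propagate the pairwise rigidity of the previous step to all chips simultaneously, thereby pinning down the global terminal $w\mu^\ast$.

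The hardest part, exactly as in~\cite{hopkins2017sorting}, is the sorting step itself: showing that the final \emph{label} assignment is forced and not merely the multiset of terminal absolute values (which Corollary~\ref{cor:unlabeled_confluent} already fixes). Two features make Type~B strictly harder than the Type~A case proved in Theorem~\ref{thm:centralconfab}. First, the analogue of Proposition~\ref{prop:simply_laced_dominant} fails for non-simply-laced $\Phi$ (Remark~\ref{rem:nonsimplylaced_central}), so one cannot reduce to firing sequences that remain in the dominant chamber and must instead control excursions outside it. Second, the sign-changing moves~\eqref{move:B} and~\eqref{move:D} can carry a chip across the origin, so one must additionally show the terminal sign pattern is forced; I expect this to be the most delicate point, and the natural attack is an induction that peels off the extreme chip (the eventual occupant of $\mu^\ast_1$), reducing $B_n$ to $B_{n-1}$, for which one must verify that connectedness --- or a suitable substitute adapted to the restricted configuration --- is inherited.
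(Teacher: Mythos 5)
You have attempted to prove Conjecture~\ref{conj:typeb}, which the paper states as an open conjecture and does not prove; the paper even lists the special case of confluence from $\omega_n$ for $B_n$ (equivalently $D_n$) among its open problems. So your proposal cannot be measured against a paper proof --- it must stand alone as a complete argument, and it does not. Your first step (reduce, via Corollary~\ref{cor:unlabeled_confluent} and Proposition~\ref{prop:centraltermination}, to showing that the particular element of the orbit $\W{\mu^\ast}$ reached is firing-sequence-independent) is correct, but it is exactly the reduction the paper already records after Corollary~\ref{cor:unlabeled_confluent}, with the warning that it ``does not necessarily help that much'': the hard content of~\cite{hopkins2017sorting} was precisely the labeled sorting statement. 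Your second and fourth paragraphs, which are supposed to supply that content for $B_n$, are declarations of intent (``isolate the invariant content,'' ``propagate the pairwise rigidity,'' ``peel off the extreme chip'') rather than arguments; you yourself flag the sign-forcing step as unresolved. This is not a fixable presentation issue --- it is the entire problem.

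There is also a concrete logical error in your third paragraph, which shows the strategy as described cannot succeed. The weight $0$ is dominant and connected for every root system (all positive roots are orthogonal to $0$, so $\SpanPP(0)=V$), yet central-firing is \emph{not} confluent from $0$ for $\Phi=A_{2n}$. Hence connectedness is emphatically not ``what excludes the mechanism behind non-confluence in examples such as $A_{2n}$ from the origin,'' and your proposed gluing argument --- chains of joint firings propagating pairwise rigidity to pin down the terminal chamber --- would, if it were valid, apply verbatim to $A_{2n}$ from the origin and prove a false statement. Any genuine proof of Conjecture~\ref{conj:typeb} must therefore isolate and exploit a feature specific to Type~B (presumably tied to the sign-changing moves~\eqref{move:B} and~\eqref{move:D}, or to the folding relationship with $A_{2n-1}$ used in~\cite{hopkins2017sorting} and in Proposition~\ref{prop:folding}), and your proposal never identifies what that feature is or how it would enter the induction. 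Note also that dominance cannot be treated as a mere convenience for starting sorted: the paper observes that the conjecture fails for non-dominant connected weights already in $B_3$, so a correct proof must use dominance at a specific load-bearing point, which your outline does not locate.
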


Note that the connectedness assumption in Conjecture~\ref{conj:typeb} is clearly required: otherwise, one can just choose $\lambda$ to be far enough from the origin so that the only roots in the firing span of $\lambda$ form a sub-root system of Type $A_2$. One can construct an example showing the dominance requirement is also necessary already for $n=3$. Note also that Conjecture~\ref{conj:typeb} has a counterpart in Type D for connected, dominant weights $\l\in P$ such that $\l\equiv\omega_n$ or $\l\equiv\omega_{n-1}$ modulo $Q$ (see Remark~\ref{rmk:half_integers}).

Finally, let us show that all the red vertices in the Type A part of Table~\ref{tab:central_firing} really are non-confluent.

\begin{prop}
Suppose that $\Phi =A_{N-1}$ and consider a weight $\omega\in\init$. Then~$\raPP$ is not confluent from $\omega$ unless $\omega$ is given by~\eqref{eq:confluent_weights_type_A} (in which case it may or may not be confluent).
\end{prop}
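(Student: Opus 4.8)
The plan is to prove non-confluence for every $\omega\in\init$ outside the list \eqref{eq:confluent_weights_type_A} by exhibiting, from the corresponding chip configuration, two firing sequences that terminate at distinct stable labeled configurations. Recall that $\raPP$ is terminating (Proposition~\ref{prop:centraltermination}) and that a weight is $\raPP$-stable exactly when all of its chips sit at distinct positions. Two observations organize the argument. First, non-confluence propagates backwards along $\raPPAst$: if $\omega\raPPAst u$ and $u$ is not confluent, then $\omega$ is not confluent, since the two distinct stable states reachable from $u$ are then reachable from $\omega$. Second, by Corollary~\ref{cor:unlabeled_confluent} the unlabeled process is confluent, so all stable configurations reachable from a fixed $\omega$ lie in a single $W$-orbit; they occupy one and the same multiset of terminal positions and can differ only in the assignment of labels to positions. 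Thus non-confluence from $\omega$ amounts to reaching two different labelings of the terminal position set.

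For the cases with $\omega\in Q+\rho$ I will invoke Corollary~\ref{cor:simply_laced_not_confluent} directly. Using the sum map $\sumcoordmap$ of Section~\ref{sec:interpr-terms-chips}, which identifies $P/Q$ with $\Z/N\Z$ via $\sumcoord{\omega_i}\equiv i$, one has $\sumcoord{\rho}=\binom{N}{2}$, so $\sumcoord{\rho}\equiv N/2$ when $N$ is even and $\sumcoord{\rho}\equiv 0$ when $N$ is odd. Hence among the weights of $\init$ the condition $\omega\in Q+\rho$ selects exactly $\omega=\omega_{N/2}$ (for $N$ even) and $\omega=0$ (for $N$ odd), and for these Corollary~\ref{cor:simply_laced_not_confluent} gives non-confluence immediately.

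It remains to treat the weights $\omega_i\notin Q+\rho$ that are not on the list \eqref{eq:confluent_weights_type_A}. For each such $i$ I will write down two firing sequences from the configuration $(1^i,0^{N-i})$ terminating at stable states with \emph{different} values of the quantity $\Psi(v):=\sum_{k=1}^{N}k\,v_k$, where $v_k$ is the position of chip $k$. Each move~\eqref{move:A} on chips $a<b$ changes $\Psi$ by $a-b<0$, so $\Psi$ strictly decreases along every firing sequence, and by the rearrangement inequality it is minimized over a fixed set of positions exactly by the decreasingly sorted configuration. The first sequence sorts the chips completely (realizing the $\Psi$-minimum of the terminal $W$-orbit), while the second first performs an \emph{inversion}: it uses moves~\eqref{move:A} to drive a chip of larger label above a chip of smaller label (concretely, raising chip $k+1$ above chip $k$ by repeatedly firing it upward against still larger chips brought up from the lower cluster), after which the remaining chips are sorted around it. A surviving inversion forces $\Psi$ strictly above the orbit minimum, so the two terminal states are distinct and non-confluence follows.

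The main obstacle is to make the inversion maneuver uniform in $i$ and in the parity of $N$, and to guarantee that the inverted pair never re-collides, since a later application of~\eqref{move:A} to chips $k,k+1$ in the order $k<k+1$ would re-sort them. I expect the availability of the maneuver to depend precisely on having enough chips on each side of the split between positions $0$ and $1$; this is exactly what fails for the boundary weights $\omega_1,\omega_{N-1}$ when $N$ is even and for the balanced weights $\omega_{(N-1)/2},\omega_{(N+1)/2}$ when $N$ is odd, which is why those weights are the confluent exceptions excluded from the claim. Controlling the trajectories so that the two chips stay separated after the inversion---for instance by routing an intermediate chip into a distinct lane between them---is a small-scale analogue of the sorting analysis of Hopkins--McConville--Propp~\cite{hopkins2017sorting}, and is where the genuine combinatorial work of the proof lies.
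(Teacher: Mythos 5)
Your reduction is set up correctly: the cases $\omega\in Q+\rho$ (namely $\omega=0$ for $N$ odd and $\omega=\omega_{N/2}$ for $N$ even) do follow from Corollary~\ref{cor:simply_laced_not_confluent}, and for the remaining weights it does suffice to exhibit a single firing sequence terminating in a non-sorted stable configuration, since Proposition~\ref{prop:simply_laced_dominant} (with Proposition~\ref{prop:unlabeled_stabilization}) supplies a sorted terminal state, and Corollary~\ref{cor:unlabeled_confluent} forces all terminal states to occupy the same multiset of positions. (Your monovariant $\Psi$ is correct but superfluous for distinguishing terminal states: a terminal configuration carrying an inversion is already a different labeling of the same positions as the sorted one.) The genuine gap is that the construction of this second, inversion-carrying sequence --- which is the entire content of the proposition beyond the $Q+\rho$ cases --- is never carried out. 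You describe an ``inversion maneuver,'' then concede that making it uniform in $i$ and in the parity of $N$, and preventing the inverted pair from re-colliding, ``is where the genuine combinatorial work of the proof lies.'' A proof cannot end where the work begins. Moreover, your heuristic for when the maneuver should fail omits $\omega=0$ for $N$ even, which lies on the confluent list~\eqref{eq:confluent_weights_type_A}; any maneuver justified only by ``enough chips on each side of the split between positions $0$ and $1$'' would apply to that case too and hence prove too much.

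For comparison, the paper produces the non-sorted terminal state structurally, with no trajectory control at all. Starting from $\omega_i$, one withholds chip~\chip1 (or chip~\chip N) and fires only the other $N-1$ chips; these form the configuration $v=\omega_{i-1}$ (resp.\ $v'=\omega_i$) for $A_{N-2}$, and by Corollary~\ref{cor:unlabeled_connected_chip} together with~\eqref{eq:pseudostab} they stabilize to the pseudo-stabilization $\pstv$ (resp.\ $\pseudostab{v}'$), which --- precisely because $\omega_i$ is off the list~\eqref{eq:confluent_weights_type_A} --- has a gap at some position $j$ (resp.\ $j'=j-1$). Releasing the withheld chip then forces it into that gap: if $j>0$, chip~\chip1 ends at position $j$ strictly below chips of larger label, and if $j\le 0$, chip~\chip N ends at position $j-1$ strictly above chips of smaller label. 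Either way the resulting stable configuration is not dominant, hence differs from the sorted one. In other words, the ``inversion'' you are trying to engineer by hand is delivered automatically by the gap in the pseudo-stabilization, and the case analysis you were worried about collapses into the single verification that this gap exists exactly when $\omega_i$ is not exceptional. As written, your argument does not establish the proposition; the withhold-one-chip device is the missing idea.
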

\begin{proof}
  The case $\omega\equiv\rho$ modulo $Q$ follows from Corollary~\ref{cor:simply_laced_not_confluent}, thus we may assume that~$\omega\neq 0$, and let $1\leq i< N$ be such that $\omega=\omega_i$. 

  Let us use the chip interpretation of $\omega_i$ from Section~\ref{sec:labeled-chip-firing-B-C-D}. We get that chip~\chip1 is at position~$1$ while chip~\chip N is at the origin. Denote by $v$ and $v'\in\R^{N-1}$ the chip configurations obtained from $\omega$ by removing chips~\chip1 and~\chip{N}, respectively. Thus we have $v=\omega_{i-1}$, $v'=\omega_i$ for $\Phi'=A_{N-2}$ (except that when $i=N-1$, we have $v'=0$).  Using~\eqref{eq:pseudostab}, we can find the pseudo-stabilizations $\pseudostab{v}$ and $\pseudostab{v}'$  of $v$ and $v'$, respectively. If~$\omega_i$ is not given by~\eqref{eq:confluent_weights_type_A} then it is straightforward to check that the pseudo-stabilization of $v$ (resp., of~$v'$) will necessarily have a gap at some position~$j\in\Z$ (resp., $j'\in\Z$). Moreover, since the pseudo-stabilization of $v$ (resp., $v'$) is required to have the same center of mass as~$v$ (resp., as~$v'$), we see that $j'=j-1$. By Corollary~\ref{cor:unlabeled_connected_chip}, the (unlabeled) stabilization of $v$ (resp., $v'$) coincides with its pseudo-stabilization $\pseudostab{v}$ (resp.,~$\pseudostab{v}'$).

  We now add chip~\chip1 (resp., chip~\chip{N}) back to $\pseudostab{v}$ (resp., $\pseudostab{v}'$) and denote by $w$ (resp., $w'$) any labeled stabilization of the corresponding chip configuration. Suppose that $j>0$. We claim that the chip configurations $w$ and $\pstv$ coincide in positions $j+1,j+2,\dots$. This can be seen either by directly doing the rest of the chip-firing moves, or by applying Corollary~\ref{cor:unlabeled_connected_chip}. Thus chip~\chip1 ends up in position~$j$. Suppose now that $j\leq 0$. Then an analogous argument shows that the chip configurations $w'$ and $\pseudostab{v}'$ coincide in positions $j-2,j-3,\dots$, and therefore chip~\chip{N} ends up in position $j-1$.  In either case, the final configuration ($w$ or $w'$) will not correspond to a dominant weight. However, we know by Proposition~\ref{prop:simply_laced_dominant} that there is also a firing sequence that starts from $\omega_i$ and always stays inside the dominant chamber. Thus we have found two different stabilizations of~$\omega_i$.
\end{proof}

\def\jbar{{\overline j}}
\def\alphaprime{\alpha}
\def\omegaprime{\omega}
\def\Phiprime{\Phi'}
\subsection{Folding}

In this subsection, we quickly explain how one can deduce confluence in a non simply laced system via the \emph{folding} technique, as described for instance in~\cite{stembridge2008folding}. Suppose we are given a simply laced root system $\Phi\subseteq V$ with Dynkin diagram $\DD$ and an automorphism $\sigma:[n]\to [n]$ of $\DD$ that does not send a vertex to its neighbor. From this data, one constructs another root system $\Phiprime$ as follows. Let $J$ be the set of equivalence classes of $[n]$ modulo $\sigma$. For each $\jbar\in J$, define the $\jbar$-th simple root $\alphaprime_\jbar$ of $\Phiprime$ to be the sum of the corresponding simple roots of $\Phi$ (which are necessarily orthogonal to each other):
\[\alphaprime_{\jbar}:=\sum_{i\in\jbar}\alpha_i.\]
It turns out that $\{\alphaprime_\jbar\colon \jbar\in J\}$ is a set of simple roots of another root system $\Phiprime$ whose Dynkin diagram is obtained from $\DD$ via \emph{folding along $\sigma$}. Note that $\Phiprime$ is naturally living inside $V^\sigma:=\{v\in V\colon \sigma(v)=v\}$. Here we extended $\sigma$ to a map $V\to V$ by linearity from its action on simple roots. The fundamental weights $\omegaprime_\jbar$ for $\Phiprime$ are again given by a similar expression:
\[\omegaprime_\jbar:=\sum_{i\in\jbar}\omega_i.\]
It is easy to check that indeed $\<\omegaprime_{\jbar_1},(\alphaprime_{\jbar_2})^\vee\>=\delta_{\jbar_1,\jbar_2}$, where $\delta$ is the Kronecker delta. Thus the weight lattice for $\Phi'$ is $P^\sigma:=\{\l\in P\colon \sigma(\l)=\l\}$.

Let us now discuss the relationship between $\raPP$ and $\raPPprime$. By~\cite[Claim~4]{stembridge2008folding}, each $\sigma$-orbit of $\Phi$ consists of pairwise orthogonal roots. By~\cite[Claim~1]{stembridge2008folding}, the roots of~$\Phiprime$ are precisely of the form $\beta=\sum_{\alpha\in B} \alpha$, where $B$ is a single $\sigma$-orbit of~$\Phi$. Thus if~$\l\raPPprime \l+\beta$ for some $\beta\in\Phiprime^+$ then $\l\raPPAst \l+\beta$ because we can just fire each root in~$B$ in an arbitrary order. We obtain the following result.

\begin{prop}\label{prop:folding}
Suppose that $\raPP$ is confluent from some weight $\l\in P^\sigma$. Then~$\raPPprime$ is confluent from $\l$ as well. 
\end{prop}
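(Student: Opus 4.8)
The plan is to show that, under the hypothesis, every maximal $\raPPprime$-firing sequence starting from $\lambda$ terminates at one and the same weight, namely the $\raPP$-stabilization of $\lambda$; confluence of $\raPPprime$ from $\lambda$ then follows immediately, since any two weights reachable from $\lambda$ can both be extended to that common terminal weight. Since $\Phi'$ is itself a root system, Proposition~\ref{prop:centraltermination} applies to it and $\raPPprime$ is terminating; and because $\raPP$ is confluent from $\lambda$ and (again by Proposition~\ref{prop:centraltermination}) terminating, $\lambda$ has a well-defined $\raPP$-stabilization, which I will call $\nu_0$. By the paragraph preceding the proposition, each single $\raPPprime$-move $\mu\raPPprime\mu+\beta$, where $\beta=\sum_{\alpha\in B}\alpha$ for a $\sigma$-orbit $B$ of roots, lifts to a sequence of $\raPP$-moves $\mu\raPPAst\mu+\beta$; iterating, any $\raPPprime$-sequence from $\lambda$ lifts to a $\raPP$-sequence, and all of its intermediate weights lie in $P^\sigma$ (each added root $\beta$ is $\sigma$-fixed).

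First I would check that $\nu_0\in P^\sigma$. The key observation is that $\sigma$, being an automorphism of the Dynkin diagram, permutes the simple roots and hence preserves both $\Phi^+$ and the inner product; consequently $\mu_1\raPP\mu_2$ holds if and only if $\sigma(\mu_1)\raPP\sigma(\mu_2)$, so $\sigma$ is an automorphism of the relation $\raPP$. Applying $\sigma$ to a terminating sequence $\lambda\raPPAst\nu_0$ and using $\sigma(\lambda)=\lambda$ produces a sequence $\lambda\raPPAst\sigma(\nu_0)$ whose endpoint $\sigma(\nu_0)$ is again $\raPP$-stable; uniqueness of the $\raPP$-stabilization (which rests on confluence from $\lambda$ together with termination) then forces $\sigma(\nu_0)=\nu_0$.

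Next I would establish that a weight $\nu\in P^\sigma$ is $\raPPprime$-stable if and only if it is $\raPP$-stable. Because $\nu$ is $\sigma$-fixed, the quantity $\<\nu,\alpha^\vee\>$ is constant along each $\sigma$-orbit of roots of $\Phi$, so a root $\alpha\in\Phi$ is orthogonal to $\nu$ exactly when its entire $\sigma$-orbit $B$ is, which in turn happens exactly when the corresponding root $\beta=\sum_{\alpha\in B}\alpha$ of $\Phi'$ is orthogonal to $\nu$; one checks moreover that $\beta$ is positive in $\Phi'$ precisely when $\alpha$ is positive in $\Phi$, since the coefficients of $\beta$ in the simple roots of $\Phi$ are $\sigma$-invariant and nonnegative. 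Hence $\nu$ admits a central-firing move for $\Phi$ if and only if it admits one for $\Phi'$. With these pieces in hand the argument closes: starting from any $\mu$ with $\lambda\raPPprimeast\mu$, extend to a maximal $\raPPprime$-sequence, which by termination of $\raPPprime$ ends at a $\raPPprime$-stable weight $\tilde\nu\in P^\sigma$; lifting shows $\lambda\raPPAst\tilde\nu$, the stability equivalence shows $\tilde\nu$ is $\raPP$-stable, and uniqueness of the $\raPP$-stabilization of $\lambda$ then gives $\tilde\nu=\nu_0$. Thus $\mu\raPPprimeast\nu_0$, and the same for any $\mu'$, which yields confluence of $\raPPprime$ from $\lambda$ with common target $\nu_0$.

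The main obstacle is conceptual rather than computational. Lifting by itself only shows that two $\raPPprime$-descendants of $\lambda$ share a common $\raPP$-descendant, but that common weight a priori lives in $P$ and is reached by $\Phi$-moves, not $\Phi'$-moves. The two ingredients that bridge this gap are the $\sigma$-invariance of the $\raPP$-stabilization (so the common target actually lies in $P^\sigma$) and the coincidence of $\raPP$- and $\raPPprime$-stable weights within $P^\sigma$ (so that $\raPPprime$ itself can reach it); verifying this stability equivalence, including the positivity bookkeeping that identifies positive orbits of $\Phi$ with positive roots of $\Phi'$, is the most delicate step.
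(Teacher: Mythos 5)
Your proof is correct and takes essentially the same approach as the paper's own: identify the unique $\raPP$-stabilization of $\l$, observe it is $\sigma$-fixed, lift $\raPPprime$-sequences to $\raPP$-sequences, and show that any $\raPPprime$-stable weight in $P^\sigma$ reachable from $\l$ must also be $\raPP$-stable (since a positive root $\alpha\in\Phi^+$ orthogonal to it yields the orthogonal positive root $\sum_{\alpha'\in B}\alpha'$ of $\Phi'$), so that termination of $\raPPprime$ forces every maximal $\raPPprime$-sequence to end at that common weight. The only cosmetic difference is that you phrase the stability comparison as an equivalence and argue directly, whereas the paper argues by contradiction.
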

\begin{proof}
  Let $\m$ be the unique $\raPP$-stable weight such that $\l\raPPAst \m$. Then $\sigma(\m)$ would also be $\raPP$-stable, and thus we must have $\sigma(\m)=\m$. Suppose that there is some  $\raPPprime$-stable weight $\m'\in P^\sigma$ such that $\l\raPPprimeast \m'$, and assume that $\m'\neq \m$. Then by the above discussion we have that~$\l\raPPAst\m'$ and thus $\m'$ must not be $\raPP$-stable. Thus there is a root $\alpha\in\Phi^+$ such that $\<\m',\alpha^\vee\>=0$. Let $B$ be the $\sigma$-orbit of $\alpha$, then $\beta:=\sum_{\alpha'\in B} \alpha'$ is a positive root for~$\Phiprime$ and since $\m'$ is $\sigma$-invariant, we still have $\<\m',\beta^\vee\>=0$. We have shown that if~$\m'\in P^\sigma$ is an $\raPPprime$-stable weight such that $\l\raPPprimeast\m'$ then $\m'=\m$. Since $\raPPprime$ is terminating, there has to be at least one such stable weight, and thus it follows that $\m$ is the only $\raPPprime$-stable weight that satisfies $\l\raPPprimeast\m$.
\end{proof}

Proposition~\ref{prop:folding} can be directly applied to get some dependencies between various claims in Conjecture~\ref{conj:master_central}. Let us list the most interesting ones:
\begin{itemize}
\item If $\raPP$ is confluent from the origin for $\Phi=A_{2n-1}$ (which it is by the result of~\cite{hopkins2017sorting}) then $\raPP$ is confluent from the origin for $\Phi=B_n$ as well (and indeed a version of folding was essentially what was used in~\cite{hopkins2017sorting} to deduce Type B confluence from Type A);
\item If $\raPP$ is confluent from $\omega_n+\omega_{n+1}$  for $\Phi=D_{n+1}$ then $\raPP$ is confluent from $\omega_n$ for $\Phi=C_n$; 
\item If $\raPP$ is confluent from $0$ (resp., from $\omega_i$ for some $1\leq i<n$) for $\Phi=D_{n+1}$ then $\raPP$ is confluent from $0$ (resp., from $\omega_i$) for $\Phi=C_n$.
\end{itemize}
\begin{remark}
By Proposition~\ref{prop:comp}, central-firing is not confluent from $0$ for $D_6$ even though for~$C_{5}$ it is. (This is generalized in Conjecture~\ref{conj:master_central} to $D_{4n+2}$ and $C_{4n+1}$; but in fact we could not check computationally whether central-firing is confluent from $0$ for~$D_{10}$.) Similarly, one easily checks that central-firing is not confluent from $\omega_2$ for $A_3$ but for $B_2$ it is. Thus the converse to Proposition~\ref{prop:folding} fails to hold in many cases.
\end{remark}

Actually, we can also apply folding to study the connectedness of weights.

\begin{prop}\label{prop:connectedness_folding}
Suppose that $\l\in P^\sigma$ is connected with respect to $\Phi'$. Then~$\l$ is connected with respect to $\Phi$ as well.
\end{prop}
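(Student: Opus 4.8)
The plan is to show directly that $\SpanPP(\l)=V$, i.e.\ that $\l$ is connected with respect to $\Phi$. Two preliminary reductions and two structural inputs set this up. First, since the firing span is $W$-invariant (Proposition~\ref{prop:span_w_inv}) and $\sigma$ permutes the fundamental weights, I may assume $\l\in P^\sigma$ is dominant. Second, the discussion preceding Proposition~\ref{prop:folding} shows that every step $\nu\raPPprime\nu+\beta$ of $\Phi'$-firing lifts to $\nu\raPPAst\nu+\beta$ by firing, one at a time, the pairwise-orthogonal roots of the $\sigma$-orbit $B$ with $\beta=\sum_{\alpha\in B}\alpha$; hence $\FS_{\Phi'^+}(\l)\subseteq\SpanPP(\l)$, and the hypothesis $\FS_{\Phi'^+}(\l)=V^\sigma$ already gives $V^\sigma\subseteq\SpanPP(\l)$. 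Moreover, because $\raPoint$ is manifestly $\sigma$-equivariant and $\l$ is $\sigma$-fixed, Proposition~\ref{prop:span_w_inv}(1) shows $\SpanPP(\l)=\SpanPoint(\l)$ is a $\sigma$-invariant subspace spanned by roots of $\Phi$.

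The engine of the argument is Proposition~\ref{prop:span_simp}: since $\Phi$ is simply laced and $\l$ is dominant, $\SpanPP(\l)$ is spanned by those simple roots $\alpha_i$ that are orthogonal to some $\Phi$-reachable dominant weight $\mu$ (so that $\mu\raPP\mu+\alpha_i$). Thus it suffices to prove that \emph{every} simple root $\alpha_i$ is orthogonal to some reachable dominant weight. I would extract these from $\Phi'$-connectedness as follows. Unfolding the recurrence for the firing span (valid for any root system, including the non-simply-laced $\Phi'$), $V^\sigma=\FS_{\Phi'^+}(\l)$ is spanned by the positive roots $\beta\in\Phi'^+$ that are fired along $\Phi'$-reachable weights $\nu\in P^\sigma$. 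For such a pair $(\nu,\beta)$, writing $\beta=\sum_{\alpha\in B}\alpha$ we have $\<\nu,\beta\>=|B|\,\<\nu,\alpha\>$ for each $\alpha\in B$ (the orbit is $\sigma$-invariant and $\nu$ is $\sigma$-fixed), so $\<\nu,\alpha^\vee\>=0$ for every $\alpha\in B$. Since each such $\nu$ is $\Phi$-reachable (via the lift), and since one can push any $\Phi$-firing into the dominant chamber by Proposition~\ref{prop:simply_laced_dominant}, the goal becomes to guarantee that, ranging over the $\beta$'s whose orbits span $V^\sigma$, the constituent $\Phi$-roots account for every simple root modulo the dominant-chamber bookkeeping.

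The main obstacle is exactly this last promotion, and it genuinely requires the dynamics rather than linear algebra: the naive statement that \emph{a $\sigma$-invariant set of roots whose $\sigma$-orbit sums span $V^\sigma$ must itself span $V$} is false (for $A_3\to B_2$ the $\sigma$-fixed roots form an $A_1\oplus A_1$ whose orbit sums already span the codimension-one space $V^\sigma$). The difficulty is concentrated on the $\Phi'$ side, where—unlike for $\Phi$—the dominant chamber need not be preserved (Remark~\ref{rem:nonsimplylaced_central}), so one cannot simply run Proposition~\ref{prop:span_simp} for $\Phi'$. I expect the clean way through is to argue by contradiction: set $U:=\SpanPP(\l)$ and suppose $U\subsetneq V$. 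Since $U^\perp\subseteq(V^\sigma)^\perp$ carries no nonzero $\sigma$-fixed vector while $\l$ is $\sigma$-fixed, one gets $\l\in U$, every fired root lies in $\Psi:=\Phi\cap U$, and the whole $\Phi$-process from $\l$ coincides with the $\Psi$-process inside $\l+U$. One then shows this is incompatible with $\FS_{\Phi'^+}(\l)=V^\sigma$, the key dynamical point being that firing a family of $\sigma$-fixed roots spanning $V^\sigma$ cannot be completed without passing through a reachable weight to which a root from a nontrivial $\sigma$-orbit is also orthogonal (the ``opening-up'' phenomenon, exactly as the $A_3$ analysis forces one through the origin). A useful auxiliary fact here is that a $\sigma$-fixed, $\raPPprime$-stable weight is automatically $\raPP$-stable, because orthogonality of a $\Phi$-root to a $\sigma$-fixed weight is equivalent to orthogonality of the associated $\Phi'$-root; this, together with an induction on the rank reducing $\Phi$ to $\Psi$, is where the real work lies.
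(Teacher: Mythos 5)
Your setup assembles every ingredient of a complete proof, but you then declare the concluding step to be hard dynamics and leave it unproved --- and that is a genuine gap, because the conclusion is immediate from what you already have. Concretely, you have established: (i) WLOG $\lambda$ is dominant (Proposition~\ref{prop:span_w_inv}); (ii) the lifting of $\Phi'$-moves to $\Phi$-firing sequences, which gives $\FS_{(\Phi')^+}(\lambda)\subseteq\SpanPP(\lambda)$, so by $\Phi'$-connectedness every simple root $\alpha'_{\overline{j}}=\sum_{i\in\overline{j}}\alpha_i$ of $\Phi'$ lies in $\SpanPP(\lambda)$; and (iii) Proposition~\ref{prop:span_simp}, which says that $\SpanPP(\lambda)$ is the span of a \emph{subset of the simple roots} of $\Phi$, say $\SpanPP(\lambda)=\Span\{\alpha_i\colon i\in S\}$ with $S\subseteq[n]$. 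Now finish with linear algebra: the simple roots form a basis of $V$, so any vector in $\Span\{\alpha_i\colon i\in S\}$ has vanishing coordinates outside $S$ in that basis; since $\alpha'_{\overline{j}}$ has nonzero coordinate at every $i\in\overline{j}$, the containment $\alpha'_{\overline{j}}\in\SpanPP(\lambda)$ forces $\overline{j}\subseteq S$. The classes $\overline{j}$ cover $[n]$, so $S=[n]$ and $\SpanPP(\lambda)=V$. This is precisely the paper's (three-line) proof.

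The obstacle that derailed you --- ``a $\sigma$-invariant set of roots whose orbit sums span $V^\sigma$ need not span $V$'' --- is a true statement but beside the point: in your $A_3\to B_2$ example the offending root $\alpha_1+\alpha_2+\alpha_3$ is not simple, whereas Proposition~\ref{prop:span_simp} guarantees that $\SpanPP(\lambda)$ is a coordinate subspace with respect to the simple-root basis, and for such subspaces the implication does hold, because each orbit sum $\alpha'_{\overline{j}}$ has full support on its orbit. Having missed this, you rerouted into a contradiction argument (the ``opening-up'' phenomenon, an induction on rank reducing $\Phi$ to $\Psi$) that you explicitly leave unfinished (``where the real work lies''). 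As written, the proposal is therefore not a proof; the missing step requires no dynamics at all, only the observation above.
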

\begin{proof}
By Proposition~\ref{prop:span_w_inv}, we may assume that $\l$ is dominant. Since $\l$ is connected with respect to $\Phi'$, it must be that $\alphaprime_{\jbar} \in \FS_{(\Phi')^+}(\lambda)$ for all simple roots $\alphaprime_{\jbar}$ of $\Phi'$. But since every $\alpha_i$ appears with nonzero coefficient in some $\alphaprime_{\jbar}$, by Proposition~\ref{prop:span_simp} this means that $\alpha_i \in \SpanPP(\lambda)$ for all $i \in [n]$, thus proving the proposition.
\end{proof}

Proposition~\ref{prop:connectedness_folding} for instance lets us apply one direction of Proposition~\ref{prop:span_A} (our classification of connected weights in Type~A) to Type~B as well:

\begin{cor}
Suppose $\Phi=B_n$. Then if the weight $\lambda \in P$ is connected, we have that~$\lambda \in \Pi^{\circ,Q}(\rho+\omega)$ for some $\omega\in \Omega^0_m$. 
\end{cor}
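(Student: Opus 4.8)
The plan is to realize $B_n$ as the folding of the simply-laced system $A_{2n-1}$ along its order-two Dynkin diagram automorphism $\sigma$, whose unique fixed node is the central one, and then to transport the Type~A classification of connected weights (Proposition~\ref{prop:span_A}) across the folding. To keep the two systems apart I write $Q_A,\rho_A,\Pi_A,(\Omega_m^0)_A$ for the data of $A_{2n-1}$ and $Q_B,\rho_B,\Pi_B,(\Omega_m^0)_B$ for that of $B_n$; recall that $B_n$ lives in $V^\sigma$, that $P=P^\sigma$, and that its simple roots and fundamental weights are $\alpha_{\overline{j}}=\sum_{i\in\overline{j}}\alpha_i$ and $\omega_{\overline{j}}=\sum_{i\in\overline{j}}\omega_i$, where $\overline{j}$ ranges over the $\sigma$-orbits on $[2n-1]$.

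First I would reduce to the case where $\lambda$ is dominant: connectedness is invariant under the Weyl group of $B_n$ by Proposition~\ref{prop:span_w_inv}, and that group preserves $V^\sigma$, so we may assume $\lambda$ is $B_n$-dominant. Since the roots in each $\sigma$-orbit are orthogonal and of equal length, $\alpha_{\overline{j}}^\vee=\tfrac{1}{|\overline{j}|}\sum_{i\in\overline{j}}\alpha_i^\vee$, and for $\lambda\in V^\sigma$ the numbers $\langle\lambda,\alpha_i^\vee\rangle$ are constant along each orbit; hence $\langle\lambda,\alpha_{\overline{j}}^\vee\rangle=\langle\lambda,\alpha_i^\vee\rangle$ for $i\in\overline{j}$, so $B_n$-dominance and $A_{2n-1}$-dominance coincide on $V^\sigma$. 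In particular $\lambda$ is also $A_{2n-1}$-dominant. Now Proposition~\ref{prop:connectedness_folding} shows that $\lambda$ is connected with respect to $A_{2n-1}$, and then Proposition~\ref{prop:span_A} furnishes $\omega_A\in(\Omega_m^0)_A$ with $\lambda\in\Pi_A^{\circ,Q_A}(\rho_A+\omega_A)$.

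The next step is to identify the target point $\mu:=\rho_A+\omega_A$ as one of the required form for $B_n$. Summing the identities $\omega_{\overline{j}}=\sum_{i\in\overline{j}}\omega_i$ over all orbits gives $\rho_B=\rho_A$, and a $\sigma$-invariant integer combination of the $\alpha_i$ must have equal coefficients along each orbit, so $Q_A\cap V^\sigma=Q_B$. Because $\lambda-\rho_A$ is $\sigma$-fixed and congruent to $\omega_A$ modulo $Q_A$, its class in $P_A/Q_A\cong\Z/2n\Z$ is fixed by the induced action of $\sigma$, which is negation; hence $2[\omega_A]=0$ and therefore $\omega_A\in\{0,\omega_n\}$. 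As $\omega_n=\omega_{\overline{n}}$ is exactly the minuscule spin fundamental weight of $B_n$, setting $\omega_B:=\omega_A$ gives $\omega_B\in(\Omega_m^0)_B$ and $\mu=\rho_B+\omega_B$.

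Finally I would descend the membership $\lambda\in\Pi_A^{\circ,Q_A}(\mu)$ to $\Pi_B^{\circ,Q_B}(\mu)$. The lattice condition is immediate, since $\lambda-\mu\in Q_A\cap V^\sigma=Q_B$. For the interior condition I would use the general fact (the same one invoked in the proof of Proposition~\ref{prop:span_pi_rho}) that for a dominant weight $\nu$ a dominant point $x$ lies in $\Pi^{\circ}(\nu)$ if and only if $\langle x,\varpi\rangle<\langle\nu,\varpi\rangle$ for every fundamental weight $\varpi$. Applied in $A_{2n-1}$, $\lambda\in\Pi_A^{\circ}(\mu)$ yields $\langle\lambda,\omega_i\rangle<\langle\mu,\omega_i\rangle$ for all $i\in[2n-1]$; summing over each orbit $\overline{j}$ and using $\omega_{\overline{j}}=\sum_{i\in\overline{j}}\omega_i$ gives $\langle\lambda,\omega_{\overline{j}}\rangle<\langle\mu,\omega_{\overline{j}}\rangle$ for every orbit, which by the same fact applied in $B_n$ means $\lambda\in\Pi_B^{\circ}(\mu)$. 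Combining the two conditions yields $\lambda\in\Pi_B^{\circ,Q_B}(\rho_B+\omega_B)$, as required. I expect the main obstacle to be assembling the folding dictionary cleanly and correctly — in particular verifying that $\rho$, the relevant minuscule weight, the dominance conditions, and the root lattice all restrict to $V^\sigma$ in the stated way — rather than the convex-geometric descent, which is routine once the permutohedron interior is described by the strict fundamental-weight inequalities.
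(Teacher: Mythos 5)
Your proof is correct and follows essentially the same route as the paper's: reduce to a dominant weight via Proposition~\ref{prop:span_w_inv}, transfer connectedness to $A_{2n-1}$ via Proposition~\ref{prop:connectedness_folding}, apply Proposition~\ref{prop:span_A}, and descend back through the folding dictionary ($\rho$, the relevant zero-or-minuscule weight, the root lattice, and dominance all restricting to $V^\sigma$ as you claim). The only cosmetic differences are that you identify the Type~A representative $\omega_A\in\{0,\omega_n\}$ by a torsion argument in $P/Q$ and phrase the interior condition via fundamental-weight facet inequalities, whereas the paper fixes $\omega$ on the $B_n$ side first and works with strictly positive simple-root coefficients via Lemma~\ref{lemma:permcontainment}; these formulations are equivalent.
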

\begin{proof}
Let $\Phi=B_n$ and $\lambda$ be a connected weight of~$\Phi$. By Proposition~\ref{prop:span_w_inv} we may assume that $\lambda$ is dominant. Let $\omega \in \Omega_m^0=\{0,\omega_n\}$ be such that $\rho+\omega-\lambda \in Q$.

Now view $\Phi$ as obtained from $\overline{\Phi}=A_{2n-1}$ via folding as described above. Let $\overline{\rho}$ be the sum of fundamental weights of $\overline{\Phi}$, and $\overline{\omega}$ be the zero-or-minuscule weight of $\overline{\Phi}$ such that $\overline{\rho}+\overline{\omega}-\lambda$ belongs to the root lattice of $\overline{\Phi}$. Note that since the fundamental weights of $\Phi$ are sums of fundamental weights of $\overline{\Phi}$, $\lambda$ is still dominant when considered as a weight of $\overline{\Phi}$. Also, by Proposition~\ref{prop:connectedness_folding}, $\lambda$ is connected when viewed as a weight of~$\overline{\Phi}$. Hence by Proposition~\ref{prop:span_A} together with Lemma~\ref{lemma:permcontainment}, we have that $\overline{\rho}+\overline{\omega}-\lambda$ is a linear combination of simple roots of $\overline{\Phi}$ with strictly positive coefficients. But observe that $\overline{\rho}=\rho$ and also that $\overline{\omega}=\omega$. Therefore, $\rho+\omega-\lambda$ is a linear combination of simple roots of $\overline{\Phi}$ (and hence of $\Phi$) with strictly positive coefficients. The conclusion of the corollary follows by Lemma~\ref{lemma:permcontainment}.
\end{proof}

\subsection{Summary}
We have proved some parts of Conjecture~\ref{conj:master_central}. Let us list all the cases that remain open for $\Phi$ of rank at least $9$ (cf. Proposition~\ref{prop:comp}).
\begin{problem}\leavevmode
  \begin{enumerate}[\normalfont(1)]
  \item Show that $\raPP$ is confluent from $\omega_1$ and $\omega_{2n+1}$ for $\Phi=A_{2n+1}$.
  \item Show that $\raPP$ is confluent from $\omega_n$ and $\omega_{n+1}$ for $\Phi=A_{2n}$.
  \item Show that  $\raPP$ is confluent from $\omega_n$ for $\Phi=B_n$ (equivalently, for $\Phi=D_n$). \label{cond:omegan_b_d}
  \item Show that $\raPP$ is confluent from $\omega\in\init$ if and only if $\omega\not\equiv\rho$ in $P/Q$ for~$\Phi=C_n$.
  \item\label{item:D_4n+2} Show that $\raPP$ is not confluent from $0$ for $\Phi=D_{4n+2}$.
  \item Show that $\raPP$ is confluent from $\omega\in\init$ for all $\omega\not\equiv\rho$ in $P/Q$ for $\Phi=D_n$, except for the case~\eqref{item:D_4n+2} above. \label{cond:omegas_d}
  \end{enumerate}
Note that case~\eqref{cond:omegas_d} includes~\eqref{cond:omegan_b_d} as a special case.
\end{problem}

\bibliography{central_firing}{}

\newcommand{\etalchar}[1]{$^{#1}$}
\begin{thebibliography}{GHMP18}

\bibitem[ALS{\etalchar{+}}89]{anderson1989disks}
Richard Anderson, L{\'a}szl{\'o} Lov{\'a}sz, Peter Shor, Joel Spencer, {\'E}va
  Tardos, and Shmuel Winograd.
\newblock Disks, balls, and walls: analysis of a combinatorial game.
\newblock {\em Amer. Math. Monthly}, 96(6):481--493, 1989.

\bibitem[BKR18]{benkart2016chip}
Georgia Benkart, Caroline Klivans, and Victor Reiner.
\newblock Chip firing on {D}ynkin diagrams and {M}c{K}ay quivers.
\newblock {\em Math. Z.}, 290(1-2):615--648, 2018.

\bibitem[BLS91]{bjorner1991chip}
Anders Bj{\"o}rner, L{\'a}szl{\'o} Lov{\'a}sz, and Peter~W. Shor.
\newblock Chip-firing games on graphs.
\newblock {\em European J. Combin.}, 12(4):283--291, 1991.

\bibitem[Bou02]{bourbaki2002lie}
Nicolas Bourbaki.
\newblock {\em Lie groups and {L}ie algebras. {C}hapters 4--6}.
\newblock Elements of Mathematics (Berlin). Springer-Verlag, Berlin, 2002.
\newblock Translated from the 1968 French original by Andrew Pressley.

\bibitem[BTW87]{bak1987self}
Per Bak, Chao Tang, and Kurt Wiesenfeld.
\newblock Self-organized criticality: An explanation of the 1/ \textit{f}
  noise.
\newblock {\em Phys. Rev. Lett.}, 59:381--384, 1987.

\bibitem[CP18]{corry2017divisors}
Scott Corry and David Perkinson.
\newblock {\em Divisors and sandpiles}.
\newblock American Mathematical Society, Providence, RI, 2018.
\newblock An introduction to chip-firing.

\bibitem[Dha90]{dhar1990self}
Deepak Dhar.
\newblock Self-organized critical state of sandpile automaton models.
\newblock {\em Phys. Rev. Lett.}, 64(14):1613--1616, 1990.

\bibitem[Dha99]{dhar1999abelian}
Deepak Dhar.
\newblock The abelian sandpile and related models.
\newblock {\em Physica A: Statistical Mechanics and its Applications}, 263(1):4
  -- 25, 1999.

\bibitem[Eri96]{eriksson1996strong}
Kimmo Eriksson.
\newblock Strong convergence and a game of numbers.
\newblock {\em European J. Combin.}, 17(4):379--390, 1996.

\bibitem[GHMP18]{galashin2017rootfiring1}
Pavel Galashin, Sam Hopkins, Thomas McConville, and Alexander Postnikov.
\newblock Root system chip-firing~{I}: interval-firing.
\newblock Forthcoming, \emph{Mathematische Zeitschrift}.
  \url{https://doi.org/10.1007/s00209-018-2159-1}, 2018.

\bibitem[HMP17]{hopkins2017sorting}
Sam Hopkins, Thomas McConville, and James Propp.
\newblock Sorting via chip-firing.
\newblock {\em Electron. J. Comb.}, 24(3):P3.13, 2017.

\bibitem[LP10]{levine2010sandpile}
Lionel Levine and James Propp.
\newblock What is {$\dots$} a sandpile?
\newblock {\em Notices Amer. Math. Soc.}, 57(8):976--979, 2010.

\bibitem[Moz90]{mozes1990reflection}
Shahar Mozes.
\newblock Reflection processes on graphs and {W}eyl groups.
\newblock {\em J. Combin. Theory Ser. A}, 53(1):128--142, 1990.

\bibitem[New42]{newman1942theories}
M.~H.~A. Newman.
\newblock On theories with a combinatorial definition of ``equivalence.''.
\newblock {\em Ann. of Math. (2)}, 43:223--243, 1942.

\bibitem[Spe86]{spencer1986balancing}
J.~Spencer.
\newblock Balancing vectors in the max norm.
\newblock {\em Combinatorica}, 6(1):55--65, 1986.

\bibitem[Ste98]{stembridge1998partial}
John~R. Stembridge.
\newblock The partial order of dominant weights.
\newblock {\em Adv. Math.}, 136(2):340--364, 1998.

\bibitem[Ste08]{stembridge2008folding}
J~Stembridge.
\newblock Folding by automorphisms.
\newblock Preprint, available online at
  \textup{\texttt{\url{http://www.math.lsa.umich.edu/~jrs/papers/folding.pdf}}},
  2008.

\end{thebibliography}
\bibliographystyle{alpha} 

\end{document}